\documentclass[a4paper]{amsart}
\usepackage{amssymb,latexsym}
\usepackage[utf8x]{inputenc}
\usepackage{amsmath,amsthm}
\usepackage{amsfonts,mathrsfs}
\usepackage{hyperref}
\usepackage{cleveref}
\usepackage{enumerate,units}
\usepackage[all]{xy}
\usepackage{graphicx,url}
   
\theoremstyle{plain}
\newtheorem{theorem}{Theorem}[section]
\newtheorem{corollary}[theorem]{Corollary}
\newtheorem{lemma}[theorem]{Lemma}
\newtheorem{proposition}[theorem]{Proposition}

\newtheorem{fact}[theorem]{Fact}
\newtheorem*{claim}{Claim}
\newtheorem*{theorem*}{Theorem}
\newtheorem*{context*}{Assumption $\diamondsuit$}

\theoremstyle{definition}
\newtheorem{definition}[theorem]{Definition}
\newtheorem{example}[theorem]{Example}

\theoremstyle{remark}
\newtheorem{remark}[theorem]{Remark}

\newtheorem*{question}{\textbf{Question}}
\newtheorem*{conjecture}{\textbf{Conjecture}}

\numberwithin{equation}{section}
\newcommand{\forkindep}[1][]{%
  \mathrel{
    \mathop{
      \vcenter{
        \hbox{\oalign{\noalign{\kern-.3ex}\hfil$\vert$\hfil\cr
              \noalign{\kern-.7ex}
              $\smile$\cr\noalign{\kern-.3ex}}}
      }
    }\displaylimits_{#1}
  }
}

\newenvironment{claimproof}[1][\proofname]
  {%
    \proof[#1]%
  }
  {%
    \endproof%
  }

\newcounter{step}                   
    {\hfill $\clubsuit$             
     \vspace{7pt}\par}

\makeatletter
\providecommand*{\cupdot}{%
  \mathbin{%
    \mathpalette\@cupdot{}%
  }%
}
\newcommand*{\@cupdot}[2]{%
  \ooalign{%
    $\m@th#1\cup$\cr
    \hidewidth$\m@th#1\cdot$\hidewidth
  }%
}
\makeatother

\newcommand{\reg}{\mathrm{reg}}
\newcommand{\dom}{\mathrm{Dom}}
\newcommand{\U}{\underline}
\newcommand{\Ba}{\textbf{a}}
\newcommand{\Bb}{\textbf{b}}
\newcommand{\kap}{\varkappa}
\newcommand{\Sh}{\text{Sh}}
\newcommand{\LSh}{\text{LSh}}
\newcommand{\RSh}{\text{RSh}}
\newcommand{\Cyc}{\text{Cyc}}
\newcommand{\E}{\mathrel{E}}
\newcommand{\R}{\mathrel{R}}
\newcommand{\D}{\mathrel{D}}

\DeclareMathOperator{\acl}{acl}
\DeclareMathOperator{\dcl}{dcl} 
\DeclareMathOperator{\tp}{tp}

\DeclareMathOperator{\Ur}{U}
\DeclareMathOperator{\Rg}{Range}
\DeclareMathOperator{\Img}{Im}


%
%
  
\begin{document}
\title{Infinite Stable Graphs With Large Chromatic Number}


\author{Yatir Halevi, Itay Kaplan, and Saharon Shelah}

\thanks{The first author would like to thanks the Israel Science Foundation for its support of this research (grant No. 181/16) and the Kreitman foundation fellowship. The second author would like to thank the Israel Science Foundation for its support of this research (grants no. 1533/14 and 1254/18). The third author would like to thank the Israel Science Foundation grant no: 1838/19 and the European Research Council grant 338821. Paper no. 1196 in the third author's  publication list.}

\email{yatirbe@post.bgu.ac.il}
\email{kaplan@math.huji.ac.il}
\email{shelah@math.huji.ac.il}
\keywords{chromatic number; stable graphs; Taylor's conjecture}
\subjclass[2010]{03C45; 05C15}

\begin{abstract}
We prove that if $G=(V,E)$ is an $\omega$-stable (respectively, superstable) graph with $\chi(G)>\aleph_0$ (respectively, $2^{\aleph_0}$) then $G$ contains all the finite subgraphs of the shift graph $\Sh_n(\omega)$ for some $n$. We prove a variant of this theorem for graphs interpretable in stationary stable theories. Furthermore, if $G$ is $\omega$-stable with $\Ur(G)\leq 2$ we prove that $n\leq 2$ suffices.
\end{abstract}

\maketitle
\section{Introduction}
The chromatic number $\chi(G)$ of a graph $G=(V,E)$ is the minimal cardinal $\kap$ for which the exists a vertex coloring with $\kap$ colors. There is a long history of structure theorems deriving from large chromatic number assumptions. For example if $\chi(G)\geq \aleph_1$ then $G$ must contain all finite bipartite graphs \cite[Corollary 5.6]{EH} and every sufficiently large odd circuit \cite[Theorem 3]{EHS}, \cite{thomassen}. See \cite{komjath} for more information.

In \cite[Problem 1.14]{taylor1}, Taylor asked what is the least cardinal $\kap$ such that every graph $G$ with $\chi(G)\geq \kap$ is elementary equivalent to graphs of arbitrarily large chromatic number. It is clear that such a minimal cardinal exists (see \cite[Theorem 1.13]{taylor1}). Taylor noted that necessarily $\kap\geq \aleph_1$. Nowadays, Taylor's conjecture is usually phrased in the following way (see \cite[Section 3]{komjath}).
\begin{conjecture}[Taylor's Conjecture]
For any graph $G$ with $\chi(G)\geq \aleph_1$ and cardinal $\kappa$ there exists a graph $H$ with $\chi(H)\geq \kappa$ such that $G$ and $H$ share the same finite subgraphs.
\end{conjecture}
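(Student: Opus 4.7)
The plan is to exhibit a canonical witnessing family, namely the shift graphs $\Sh_n(\alpha)$ for ordinals $\alpha$ and integers $n\geq 1$, and to reduce Taylor's conjecture to the following combinatorial core assertion: every graph $G$ with $\chi(G)\geq \aleph_1$ contains all finite subgraphs of $\Sh_n(\omega)$ for some $n$. Granting the core, one takes $H := \Sh_n(\alpha)$ with $\alpha$ large enough that $\chi(H)\geq \kappa$; since every finite subgraph of $\Sh_n(\alpha)$ already embeds into $\Sh_n(\omega)$, and hence by the core into $G$, while any finite subgraph of $G$ of interest can be arranged to embed into $H$ by taking $\alpha$ large, $G$ and $H$ will share the same finite subgraphs.

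First I would verify the shift-graph side, which is classical: for each fixed $n$, $\chi(\Sh_n(\alpha))$ grows without bound as $\alpha$ increases, via the Erdős--Hajnal computation based on the Erdős--Rado partition theorem. So for any prescribed $\kappa$ one can choose $\alpha$ with $\chi(\Sh_n(\alpha))\geq \kappa$. One should also record the trivial age containment: any finite set of strictly increasing $n$-tuples of ordinals is, as a subgraph of $\Sh_n(\alpha)$, isomorphic to a corresponding configuration inside $\Sh_n(\omega)$.

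Second, and most importantly, I would try to extract the requisite finite shift-graph configurations from $G$ under only the hypothesis $\chi(G)\geq \aleph_1$. The template to imitate is the Erdős--Hajnal tree argument that already yields all finite bipartite subgraphs: assign each vertex an ordinal-valued coloring rank, restrict to vertices of equal rank, and pass to long increasing sequences on which neighbourhoods are controlled. To catch the full $\Sh_n$ pattern one would iterate this thinning $n$ times, tracking $k$-tuples instead of singletons so that the defining incidence of $\Sh_n$ (appropriate shift on an $(n{+}1)$-element set of ordinals) is enforced by repeated Ramsey-type pigeonholing.

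The main obstacle is exactly this second step, and is the reason Taylor's conjecture is still open in ZFC: there is no known purely combinatorial derivation of shift-graph finite configurations from large chromatic number alone, and the Hajnal--Komjáth-type constructions show that the classes of finite subgraphs realisable in large-chromatic-number graphs can be severely restricted, so any such derivation must be delicate. The paper at hand bypasses this difficulty by imposing stability hypotheses on $G$, under which forking and $\Ur$-rank supply the descent argument that replaces the missing combinatorial one. A bare-hands attack would at minimum require a substitute notion of rank for arbitrary graphs, and I would expect the plan to stall precisely there, succeeding only in the tame cases treated later in this paper.
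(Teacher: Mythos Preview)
The statement you are trying to prove is listed in the paper as a \emph{conjecture}, not a theorem; the paper offers no proof of it and in fact records that Taylor's conjecture is consistently false and that the Strong Taylor's conjecture (your ``core assertion'') was refuted outright by Hajnal--Komj\'ath. So there is nothing to compare your attempt against, and your core assertion is known to be false in ZFC, not merely open.

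Separately, your reduction step is itself flawed. You propose taking $H=\Sh_n(\alpha)$ and claim that ``any finite subgraph of $G$ of interest can be arranged to embed into $H$ by taking $\alpha$ large.'' This is false: for $n\geq 2$ the shift graph $\Sh_n(\alpha)$ has odd girth $2n+1$ regardless of $\alpha$, so if $G$ contains a triangle (and graphs with $\chi(G)\geq\aleph_1$ certainly may) then no $\Sh_n(\alpha)$ with $n\geq 2$ shares the same finite subgraphs as $G$. The correct derivation of Taylor's conjecture from the Strong Taylor's conjecture, sketched in the paper's introduction, does not take $H$ to be a shift graph at all: one takes $H$ to be an elementary extension $G\prec \mathcal{G}$ that contains $\Sh_n(\beth_{n-1}(\kappa)^+)$ as a subgraph (this exists because $G$ contains all finite subgraphs of $\Sh_n(\omega)$, so by compactness some elementary extension contains the whole shift graph). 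Then $\mathcal{G}$ trivially has the same finite subgraphs as $G$, and $\chi(\mathcal{G})\geq\kappa^+$ since chromatic number can only go up when passing to a supergraph.
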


For a caridnal $\kappa$ the shift graph $\Sh_n(\kappa)$ is the graph whose vertices are increasing $n$-tuples $s$ of ordinals less than $\kappa$, where we put an edge between $s$ and $t$ if for every $1\leq i\leq n-1$, $s(i)=t(i-1)$ or vice-versa. The shift graphs $\Sh_n(\kappa)$ have large chromatic numbers depending on $\kappa$, see Fact \ref{F:shft-largchr}. Erd\"os-Hajnal-Shelah \cite[Problem 2]{EHS} and Taylor \cite[Problem 43, page 508]{Taylorprob43} proposed the following strengthening of this conjecture. 
\begin{conjecture}[Strong Taylor's Conjecture]
For any graph $G$ with $\chi(G)\geq \aleph_1$ there exists an $n\in\mathbb{N}$ such that $G$ contains all finite subgraphs of $\Sh_n(\omega)$.
\end{conjecture}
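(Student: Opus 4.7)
The plan is to attempt a transfinite construction inside $G$, bootstrapped from the uncountable chromatic number hypothesis to force a shift-graph pattern among carefully chosen vertices. The motivating precedent is Erd\"os--Hajnal's theorem that $\chi(G) \geq \aleph_1$ implies $G$ contains every finite bipartite graph; the Strong Taylor's Conjecture asks for the strictly stronger conclusion that the finite subgraphs of $\Sh_n(\omega)$ appear uniformly, for some single $n = n(G)$.

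Fix a target $n$ and a finite $H \subseteq \Sh_n(\omega)$. Let $M \prec H(\theta)$ be countable containing a code for $G$. The first step is to find a vertex $v_0 \notin M$ such that, when we pass to a suitable ``derived'' graph on $N(v_0)$ (or on an uncountable subset of $V \setminus M$ with uniform adjacency behavior to $v_0$), the resulting graph still has uncountable chromatic number. One would then iterate $n-1$ more times to produce vertices $v_0, \dots, v_{n-1}$ with controlled mutual adjacency, run a pressing-down / $\Delta$-system argument along $\omega_1$ at each level to stabilize the adjacency pattern, and finish with an Erd\"os--Rado extraction on $[\omega_1]^n$ identifying an indexed family whose adjacency exactly realizes the shift pattern of $H$. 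If that family can be chosen of arbitrary size, every finite subgraph of $\Sh_n(\omega)$ embeds.

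The main obstacle --- and the reason the conjecture remains open in full generality --- lies precisely in the first step: in an arbitrary uncountably chromatic graph there need not exist any vertex whose neighborhood retains uncountable chromatic number, and there is no obvious substructure on which one could iterate the construction. The paper's stable-theoretic theorems bypass this issue using Morley or $U$-rank: ranks strictly decrease on algebraic neighborhoods, forking provides uniform adjacency traces, and stationarity supplies the canonical extensions needed for the tree of depth $n$. No such machinery is currently available for arbitrary graphs. A successful proof of the full conjecture would appear to require either a genuinely new ordinal-valued invariant on vertex ideals that strictly decreases under a chromatic restriction operation (playing the role of $U$-rank for untame graphs), or an altogether global argument --- via forcing and absoluteness from a canonical coloring, or via large-cardinal compactness techniques in the spirit of Shelah --- that avoids the step-by-step neighborhood induction entirely.
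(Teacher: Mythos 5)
The statement you were asked to prove is not a theorem of this paper at all: it is the Strong Taylor's Conjecture, which the introduction explicitly records as having been refuted for arbitrary graphs (the paper cites [HK, Theorem 4] for the refutation, and [KS] for the consistent failure of the weaker Taylor's conjecture). So no correct proof of the statement as it stands can exist, and the paper never attempts one; its actual results are stability-restricted variants --- $\omega$-stable graphs with $\chi(G)>\aleph_0$, superstable graphs with $\chi(G)>2^{\aleph_0}$, and graphs interpretable in stationary stable structures --- proved via saturated EM-models, indiscernible sequences, representations in free algebras, and the fact that stability of the edge relation yields exactly the invariance condition ($\bar a\mathrel{E}\bar b$ and $f_{\bar a,\bar b}=f_{\bar c,\bar d}$ imply $\bar c\mathrel{E}\bar d$) needed to extract a shift graph. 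Your closing claim that the conjecture ``remains open in full generality'' is therefore factually wrong; it is settled, negatively.

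Independently of that, your text is not a proof but an outline that stalls at its own declared first step: you concede that in an arbitrary graph of uncountable chromatic number there need not be any vertex whose neighborhood (or derived substructure) retains uncountable chromatic number, and without that the iteration, the pressing-down/$\Delta$-system stabilization along $\omega_1$, and the Erd\"os--Rado extraction on $[\omega_1]^n$ never begin. Even granting that step, the sketch does not explain how a single $n$ would be chosen uniformly for all finite subgraphs, nor how the stabilized adjacency pattern would be forced to be the shift pattern rather than, say, one containing finite cycles, fixed coordinates, or unbounded shifts --- ruling those out is precisely the case analysis the paper performs with the functions $f_{\bar a,\bar b}$ and the auxiliary graphs of small chromatic number, and it genuinely uses stability (or homogeneity of the representation). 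The honest conclusion is that the statement, as posed for arbitrary graphs, is false, and the correct targets are the paper's restricted theorems.
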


Assuming the strong Taylor's Conjecture, if $\chi(G)\geq \aleph_1$ there exists an elementary extension $G\prec \mathcal{G}$ that has $\Sh_n(\beth_{n-1}(\kappa)^+)$ as a subgraph, and thus $\chi(\mathcal{G})\geq \kappa^+$, see Fact \ref{F:shft-largchr}. So the strong Taylor's conjecture implies Taylor's conjecture. It is known that Taylor's conjecture is consistently false and that a relaxation of Taylor's conjecture is consistently true, namely assuming that $\chi(G)\geq \aleph_2$ \cite{KS}. The strong Taylor's conjecture was refuted in \cite[Theorem 4]{HK}.

Since the (strong) Taylor's conjecture fails in general, one may wonder if it holds for a ``tame" class of graphs. Classification theory provides ``dividing lines" separating ``tame" and ``wild" classes of structures (and theories). These dividing lines are usually defined by requiring that a structure omits a certain class of (definable) combinatorial patterns. It is thus not surprising that restricting to such graphs will yield better combinatorial results.

An important instance of this phenomena is when tame=stable. Stable theories, which originated in the work of the third author in the 60s and 70s, is the most extensively studied class. Examples of stable theories include abelian groups, modules, algebraically closed fields, graph theoretic trees, or more generally superflat graphs \cite{PZ}. Stablility also had an impact in combinatorics, e.g. \cite{MS} and \cite{CPT} to name a few.

In this paper we prove variants of the strong Taylor's conjecture for some classes of stable graphs.

\begin{theorem*}
Let $G=(V,E)$ be a graph. If
\begin{enumerate}
\item $G$ is $\omega$-stable and $\chi(G)>\aleph_0$ or
\item $G$ is superstable and $\chi(G)>2^{\aleph_0}$ or
\item $G$ is interpretable in a stable structure, in which every type (over any set) is stationary, and $\chi(G)> \beth_2(\aleph_0)$
\end{enumerate}
then $G$ contains all finite subgraphs of $\Sh_n(\omega)$ for some $n\in \mathbb{N}$.

Furthermore, if $G$ is $\omega$-stable with $\chi(G)>\aleph_0$ and $\Ur(G)\leq 2$ then $n\leq 2$ suffices.
\end{theorem*}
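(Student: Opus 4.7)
The plan is to exploit stability-theoretic rank machinery to convert the large chromatic number of $G$ into a Morley sequence, inside a monster model, whose pattern of $E$-edges combinatorially realizes finite subgraphs of $\Sh_n(\omega)$. Since containment of any fixed finite subgraph is elementary, it suffices to produce the witnesses in an elementary extension of $G$.

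The first step is to localize the large chromatic number to a single complete type. Chromatic number is $\kappa$-subadditive: if $V = \bigcup_{p \in S_1(A)} p(G)$ and every $\chi(p(G)) \le \kappa$, then $\chi(G) \le \max(\kappa, |S_1(A)|)$. In case (1), $\omega$-stability makes $|S_1(A)| \le \aleph_0$ for countable $A$, so $\chi(G) > \aleph_0$ yields some $p \in S_1(A)$ with $\chi(p(G)) > \aleph_0$. Cases (2) and (3) are handled by analogous pigeonholes with cardinal bounds $2^{\aleph_0}$ and $\beth_2(\aleph_0)$; the higher bound in case (3) reflects the need to work with the ambient stable structure and pass to imaginaries. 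Among all such large-chromatic types, fix one $p$ whose Morley rank (case (1))/$U$-rank (case (2))/appropriate stationary-stable analogue (case (3)) is minimal, and denote this rank by $n$.

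Next, $p(G)$ cannot be $E$-independent (else it is properly $1$-colorable), so there exist $a, b \models p$ with $a \E b$. Let $q = \tp(a,b/A)$ and form a long Morley sequence $((a_i, b_i))_{i<\kappa}$ in $q$ over $A$. Stability provides full order-indiscernibility, so the $E$-pattern among the coordinates depends only on the order type of their indices. The minimality of the rank $n$ ensures that every proper forking drop already lies in the small-chromatic regime, which constrains edges in the sequence to appear precisely where $\Sh_n(\omega)$ predicts.

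The last step is combinatorial: convert this indiscernible edge pattern into copies of arbitrary finite subgraphs of $\Sh_n(\omega)$. For each increasing $n$-tuple $S=(s_0<\dots<s_{n-1})$ of indices, one associates a vertex $c_S \in \dcl(A, (a_{s_j},b_{s_j})_{j<n})$ via a fixed $A$-definable formula drawn from the rank witness of $p$, arranged so that indiscernibility makes $c_S \neq c_T$ for $S \neq T$ and $c_S \E c_T$ exactly when $S,T$ are shift-adjacent in $\Sh_n(\omega)$. Any finite subgraph of $\Sh_n(\omega)$ is then realised on the $c_S$'s, and descends to $G$ by elementarity. The main obstacle is precisely this final combinatorial matching: pinning down why the rank controls the arity and producing the $c_S$'s uniformly definable with the correct injectivity and edge preservation. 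This will likely demand a delicate Ramsey-plus-forking extraction to ensure the Morley sequence is "strict" enough along the right equivalence classes. For the furthermore statement, $\Ur(G)\le 2$ caps the minimal bad rank at $2$, which caps the arity of the shift tuples directly at $n \le 2$.
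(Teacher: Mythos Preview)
Your proposal has a genuine gap at its core: the asserted link between the minimal rank $n$ of a large-chromatic type and the arity $n$ of the shift graph is never justified, and in fact no such link is used (or known) in the proof of (1)--(3). The paper's argument for those three items does not proceed by rank-localization at all. For (1) and (2) it passes to a saturated EM-model in an expanded language (this is where $\omega$-stability/superstability is used, via results of Mariou and Shelah), writes every vertex as $t(\bar a)$ for a fixed term $t$ applied to an increasing tuple from the generating indiscernible sequence, and then analyzes the edge relation purely combinatorially: stability of $E$ forces that whether $t(\bar a)\E t(\bar b)$ depends only on the partial matching function $f_{\bar a,\bar b}=\{(i,j):a_i=b_j\}$ (Proposition~3.12). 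The arity of the resulting shift graph is extracted from the orbit structure of the finitely many $f$'s that occur, via Propositions~3.3--3.4, and has nothing to do with Morley or $\Ur$-rank. For (3) the EM-model is replaced by a ``homogeneous representation'' in a free algebra (built level by level using stationarity and nonforking), but the endgame is the same combinatorial reduction.

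Your sketch also leaves the crucial step undefined: you write that one associates to an $n$-tuple $S$ a vertex $c_S\in\dcl(A,(a_{s_j},b_{s_j})_{j<n})$ ``via a fixed $A$-definable formula drawn from the rank witness of $p$'', but there is no indication what this formula is, why $c_S$ realizes $p$, why distinct $S$ give distinct $c_S$, or why edges appear exactly on shift-adjacent pairs. A Morley sequence in the $2$-type $\tp(ab/A)$ gives you independent pairs, not overlapping tuples, so the shift pattern does not emerge for free. Even in the ``furthermore'' case with $\Ur(G)\le 2$, the paper does not simply read off $n\le 2$ from the rank bound: it first reduces (via canonical bases and pseudo-one-basedness, Lemma~6.3 and Proposition~6.5) the problem on the $\Ur$-rank~$2$ type $p$ to an auxiliary graph on a $\Ur$-rank~$1$ type $q$, finds a Morley clique there, and then defines $a_{i,j}\in\acl(e_i,e_j)$ explicitly to witness the $\Sh_2(\omega)$ embedding. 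Your one-line ``caps the minimal bad rank at $2$, which caps the arity'' skips all of this work.
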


Items $(1)$ and $(2)$ are Corollary \ref{C:superstable,omegastable}, $(3)$ is Corollary \ref{C:stationarystable} and the furthermore is Theorem \ref{T:Uleq2}.
%
%
%

The following remains open.
\begin{question}
\begin{enumerate}
\item What is the situation with general stable graphs?
\item Is it enough to assume $\chi(G)>\aleph_0$ in the above theorem?
\item What about other tameness assumptions, e.g. NIP or simplicity?
\end{enumerate}
\end{question}

\subsection*{Acknowledgments}
Section 6 is joint work with Elad Levi. We would like to thank him for allowing us to add these results. We thank Hrushovski for Proposition \ref{P:stationary-Udi}.

\section{Notation and Preliminaries}
We use fairly standard model theoretic terminology and notation, see for example \cite{TZ}. We use small latin letters $a,b,c$ for tuples and capital letters $A,B,C$ for sets. We also employ the standard model theoretic abuse of notation and write $a\in A$ even for tuples when the length of the tuple is immaterial or understood from context. When we write $a\equiv_A b$ we mean that $\tp(a/A)=\tp(b/A)$.

For any two sets $A$ and $J$, let $A^{\underline{J}}$ be the set of injective functions from $\gamma$ to $A$ (where the notation is taken from the falling factorial notation), and if $(A,<)$ and $(J,<)$  are both linearly ordered sets, let $(A^{\U J})_<$ be the subset of $A^{\U J}$ consisting of strictly increasing functions. If we want to emphasize the order on $J$ we will write $(A^{\U{(J,<)}})_<$. For an ordinal $\gamma$, we set $A^{<\U{\gamma}}:=\bigcup_{\alpha<\gamma}A^{\U \alpha}$. Throughout this paper, we interchangeably use sequence notation and function notation for elements of $A^{\U J}$, e.g. for $f\in A^{\U J}$, $f(i)=f_i$. For any sequence $\eta$ we denote by $\Rg(\eta)$ the underlying set of the sequence (i.e. its image). If $(A,<^A)$ and $(B,<^B)$ are linearly ordered sets, then the most significant coordinate of the lexicographic order on $A\times B$ is the first one.

By a \emph{graph} we mean a pair $G=(V_1,E)$ where $E\subseteq V^2$ is symmetric and irreflexive. A \emph{graph homomorphism} between $G_1=(V_1,E_1)$ and $G_2=(V_2,E_2)$ is a map $f:V_1\to V_2$ such that $f(e)\in E_2$ for every $e\in E_1$. If $f$ is injective we will say that $f$ embeds $G_1$ into $G_2$ a subgraph. If in addition we require that $f(e)\in E_2$ if and only if $e\in E_1$ we will say that $f$ embeds $G_1$ into $G_2$ as an induced subgraph.

\begin{definition}
Let $G=(V,E)$ be a graph.
\begin{enumerate}
\item For a cardinal $\kap$, a \emph{vertex coloring} (or just coloring) of size $\kap$ is a function $c:V\to \kap$ such that $x\E y$ implies $c(x)\neq c(y)$ for all $x.y\in V$.
\item The \emph{chromatic number} $\chi(G)$ is the minimal cardinality of a vertex coloring of $G$.
\end{enumerate}
\end{definition}

\begin{remark}
Note that for a graph $G=(V,\E)$ with $|V|\geq 2$, $\chi(G)=1$ if and only if $|E|=\emptyset$. 
\end{remark}

Here are some useful easy and well known properties of the chromatic number function of graphs (we provide proofs for the convenience of the reader).

\begin{lemma}\label{L:basic-prop-chi}
Let $G=(V,\E)$ be a graph.
\begin{enumerate}

\item If $V=\bigcup_{i\in I} V_i$ then $\chi(G)\leq \sum_{i\in I} \chi(V_i, E\restriction V_i)$.
\item If $E=\bigcup_{i\in I} \E_i$ (with the $E_i$ being symmetric)  then $\chi(G)\leq \prod_{i\in I}\chi(V,E_i)$.
\item If $\varphi:H\to G$ is a graph homomorphism then $\chi(H)\leq \chi(G)$.
\item If $\varphi:(H,E^H)\to (G,E^G)$ is a surjective graph homomorphism with $e\in E^H\iff \varphi(e)\in E^G$ then $\chi(H)=\chi(G)$.
\end{enumerate}
\end{lemma}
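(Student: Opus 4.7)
The proof is routine and splits into independent arguments for each clause. The common theme is to manipulate colorings directly.

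For (1), fix colorings $c_i : V_i \to \chi(V_i, E\restriction V_i)$ with pairwise disjoint ranges (which we can arrange by relabeling), and for each $v\in V$ pick some $i(v)\in I$ with $v\in V_{i(v)}$. Setting $c(v) := c_{i(v)}(v)$ gives a valid coloring of $G$: if $x \E y$ then both $x,y \in V_{i(x)}\cap V_{i(y)}$ could fail, but edges only exist inside the $V_i$'s only so long as we handle the case $i(x)\neq i(y)$ using disjoint ranges. The resulting palette has size at most $\sum_{i\in I}\chi(V_i, E\restriction V_i)$.

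For (2), for each $i\in I$ choose a coloring $c_i : V \to \chi(V,E_i)$, and let $c(v) := (c_i(v))_{i\in I} \in \prod_{i\in I}\chi(V,E_i)$. If $x\E y$, pick $i$ with $x\E_i y$; then $c_i(x)\neq c_i(y)$, hence $c(x)\neq c(y)$. For (3), if $c : V(G)\to \chi(G)$ is a coloring, then $c\circ\varphi$ is a coloring of $H$, since $x \E^H y$ forces $\varphi(x) \E^G \varphi(y)$ and therefore $c(\varphi(x))\neq c(\varphi(y))$.

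For (4), the inequality $\chi(H)\leq \chi(G)$ is (3). For the reverse, use the surjectivity to pick a section $s : V(G)\to V(H)$ with $\varphi\circ s = \mathrm{id}_{V(G)}$. If $u \E^G v$ then $(\varphi(s(u)),\varphi(s(v))) = (u,v)\in E^G$, so by the ``iff'' hypothesis $s(u) \E^H s(v)$. Hence for any coloring $c$ of $H$, the map $c\circ s$ is a coloring of $G$, giving $\chi(G)\leq \chi(H)$. No step is really an obstacle; the only mild care is in (1) (making the palettes disjoint) and in (4) (noting that the ``iff'' is precisely what lets a section transport colorings backwards).
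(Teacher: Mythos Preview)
Your proof is correct and follows essentially the same approach as the paper's: in (1) you tag colors by the index $i$ (the paper writes this as $(c_{i_v}(v),i_v)$), in (2) you take the product coloring, in (3) you pull back along $\varphi$, and in (4) you push forward along a section. The only quibble is that your sentence in (1) (``edges only exist inside the $V_i$'s only so long as\ldots'') is garbled; what you need to say is simply that if $i(x)\neq i(y)$ the disjoint ranges give $c(x)\neq c(y)$, while if $i(x)=i(y)=i$ then $x,y\in V_i$ with $x\mathrel{E} y$, so $c_i(x)\neq c_i(y)$.
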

\begin{proof}
$(1)$ Let $c_i:V_i\to \kap_i$ be a coloring of $(V_i,E\restriction V_i)$. Define a coloring $c:V\to \bigcup \{\kap_i\times\{i\}:i\in I\}$ by choosing for any $v\in V$ an $i_v\in I$ such that $v\in V_{i_v}$ and setting $c(v)=(c_{i_v}(v),i_v)$.

$(2)$ Let $c_i:V_i\to \kap_i$ be a coloring of $(V,E_i)$. Define a coloring $c:V\to \prod_{i\in I}\kap_i$ by $c(v)(i)=c_i(v)$.

$(3)$ Write $G=(V^G,E^G)$ and $H=(V^H,E^H)$ and let $c:V^G\to \kap$ be a coloring of $G$. Define a coloring $c^\prime:V^H\to \kap$ of $H$ by $c^\prime(v)=c(f(v))$. 

$(4)$ Let $c:V^H \to \kap$ be a coloring. We define a coloring $c^\prime: V^G\to \kap$ by choosing for any element $v\in V^G$ an element $w\in \varphi^{-1}(v)$ and setting $c^\prime(v)=c(w)$. It is a legal coloring since if $v_1\E^G v_2$ then $w_1\E^H w_2$ for any $w_1\in \varphi^{-1}(v_1)$ and $w_2\in \varphi^{-1}(v_2)$.
\end{proof}

We will mainly be interested with the following so called ``Shift Graphs", first defined by Erd\"os-Hajnal in \cite{EH-shift}.

\begin{example}[Shift Graph]
For any finite number $1\leq r$ and any linearly ordered set $(A,<)$, let $\Sh_r(A)$, or $\Sh_r(A,<)$ if we want to emphasize the order, (the shift graph on $A$) be the following graph: its set of vertices is the set $(A^{\U r})_<$ of increasing $r$-tuples, $s_0,\dots,s_{r-1}$, and we put an edge between $s$ and $t$ if for every $1\leq i\leq r-1$, $s(i)=t(i-1)$, or vice-versa. It is an easy exercise to show that $\Sh_r(A)$ is a connected graph. If $r=1$ this gives $K_{A}$, the complete graph on $A$. 
%
%
%
%
\end{example}

\begin{example}[Symmetric Shift Graph]
Let $1\leq r$ be any natural number and $A$ any set. The \emph{symmetric shift graph} $\Sh_r^{sym}(A)$ is defined similarly as the shift graph but with set of vertices $A^{\U r}$ (set of distinct $r$-tuples).
Note that $\Sh_r(A)$ is an induced subgraph of $\Sh_r^{sym}(A)$ (and that for $r=1$ they are both the complete graph on $A$). 
%
Recall that $\beth_0(\kap):=\kap$ and $\beth_{k+1}(\kap):=2^{\beth_{k}(\kap)}$.

\begin{fact}\cite[Proof of Theorem 2]{EH-shift}\label{F:shft-largchr}
Let $2\leq r<\omega$ be a natural number and $\kap$ be a cardinal,  \[\chi\left(\Sh_r^{sym}(\beth_{r-1}\left(\kap\right))\right)\leq\kap\] and \[\chi\left(\Sh_r(\beth_{r-1}\left(\kap\right)^{+})\right)\geq\kap^{+}.\]
\end{fact}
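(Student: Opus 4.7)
The fact splits into two independent inequalities; I would prove each separately.

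For the lower bound $\chi(\Sh_r(\beth_{r-1}(\kap)^+)) \ge \kap^+$, I would invoke the Erd\"os--Rado partition relation $\beth_{r-1}(\kap)^+ \to (\kap^+)^r_\kap$. Any $\kap$-coloring $c$ of $\Sh_r(\beth_{r-1}(\kap)^+)$ is the same thing as a $\kap$-coloring of the $r$-element subsets of $\beth_{r-1}(\kap)^+$, so by Erd\"os--Rado there is $H \subseteq \beth_{r-1}(\kap)^+$ of cardinality $\kap^+$ on which $c$ is constant. For any $s_0 < s_1 < \cdots < s_r$ chosen from $H$, the two tuples $(s_0, \ldots, s_{r-1})$ and $(s_1, \ldots, s_r)$ are adjacent in $\Sh_r$ yet share a color, contradicting that $c$ is a valid vertex coloring.

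For the upper bound $\chi(\Sh_r^{sym}(\beth_{r-1}(\kap))) \le \kap$, I would induct on $r \ge 2$, identifying $\beth_{r-1}(\kap)$ with the function space $2^{\beth_{r-2}(\kap)}$ and using a first-difference coloring. Given distinct $s_0, \ldots, s_{r-1} \in 2^{\beth_{r-2}(\kap)}$, set $\delta_i(s) := \min\{\alpha \in \beth_{r-2}(\kap) : s_i(\alpha) \ne s_{i+1}(\alpha)\}$ for $0 \le i \le r-2$. In the base case $r = 2$, color $(a, b) \in (2^\kap)^{\U 2}$ by $(\delta_0(a,b), a(\delta_0(a,b))) \in \kap \times 2$, using $\kap$ colors; if $(a,b)$ and $(b,c)$ received the same color, then both first-differences would coincide at some $\delta$ and force $a(\delta) = b(\delta)$, contradicting the definition of $\delta_0(a,b)$.

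For the inductive step, the essential observation is that $s \mapsto (\delta_0(s), \ldots, \delta_{r-2}(s))$ carries adjacent tuples of $\Sh_r^{sym}(2^{\beth_{r-2}(\kap)})$ to tuples related by the shift relation in $\beth_{r-2}(\kap)^{r-1}$. On tuples whose first-difference sequence is injective, the inductive $\kap$-coloring of $\Sh_{r-1}^{sym}(\beth_{r-2}(\kap))$ pulls back to a valid coloring. To extend this to a total coloring separating adjacent vertices even when some $\delta_i(s)$ coincide, I would combine the pullback via Lemma \ref{L:basic-prop-chi}(2) with an auxiliary finite coloring recording both the coincidence pattern among the $\delta_i(s)$ and the bits $s_i(\delta_i(s))$. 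The main obstacle is precisely this degenerate case in the inductive step: when the first-difference sequence is not injective, the pulled-back coloring is undefined on that tuple, and one must verify by a careful case analysis that the auxiliary finite factor truly distinguishes such tuples from all of their neighbors in every possible configuration of coincidences.
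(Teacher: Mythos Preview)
Your lower bound argument via Erd\H{o}s--Rado is the same as the paper's, and your base case $r=2$ is essentially the paper's first-difference coloring (the paper splits on the lexicographic order of the pair rather than recording the bit, but this is cosmetic).

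The inductive step is where you and the paper diverge, and the paper's reduction is cleaner. You try to map $\Sh_r^{sym}(2^{\beth_{r-2}(\kap)})$ into $\Sh_{r-1}^{sym}(\beth_{r-2}(\kap))$ via the first-difference sequence $s\mapsto(\delta_0(s),\ldots,\delta_{r-2}(s))$, and as you note this fails when the $\delta_i$ are not distinct; you then propose an auxiliary finite coloring to patch the degenerate case but do not actually verify it. The paper instead reduces $\Sh_{r+1}^{sym}(\beth_r(\kap))$ directly to $\Sh_2^{sym}(2^\kap)$: given the inductive $\kap$-coloring $d$ of $\Sh_r^{sym}(\beth_r(\kap))$ with values in $2^\kap$, the map
\[
\psi(u_0,\ldots,u_r)=\bigl(d(u_0,\ldots,u_{r-1}),\,d(u_1,\ldots,u_r)\bigr)
\]
lands in $\Sh_2^{sym}(2^\kap)$ because the two coordinates are adjacent in $\Sh_r^{sym}$ and hence receive distinct $d$-colors. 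This $\psi$ is then a graph homomorphism with no degenerate case, and the base-case coloring of $\Sh_2^{sym}(2^\kap)$ pulls back. So the paper's induction uses the already-constructed coloring rather than the raw first-difference data, which is exactly what lets it avoid the obstacle you flag. Your sketch may be completable, but as written the ``careful case analysis'' is where all the work would be, and you have not done it; the paper's route shows it is unnecessary.
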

\begin{proof}
We first show that $\chi\left(\Sh_r^{sym}(\beth_{r-1}\left(\kap\right))\right)\leq\kap$. The proof is by induction on $r\geq2$. Suppose $r=2$. Let $<$ be the lexicographical order on $2^{\kap}$. Let $Y_{1}=\left( \left(2^{\kap}\right)^{\U 2}\right)_<$ be the set of increasing pairs let $Y_{2}$ be the complement. By Lemma \ref{L:basic-prop-chi}(1) it is enough to show that $\chi\left(Sh_2^{sym}(2^{\kap})\upharpoonright Y_{1}\right)\leq\kap$, $\chi\left(\Sh_2^{sym}(2^{\kap})\upharpoonright Y_{2}\right)\leq\kap$. The proofs for $Y_{1}$ and $Y_{2}$ are similar so we prove it just for $Y_{1}$. 

Given $\left(x,y\right)\in Y_{1}$, let $c\left(x,y\right)=\min\{i<\kap: x\left(i\right)\neq y\left(i\right)\}$. Suppose that $x<y<z\in2^{\kap}$ are such that $c\left(x,y\right)=c\left(y,z\right)$. Then $x\wedge y=y\wedge z$ (where $x\wedge y=x\upharpoonright c\left(x,y\right)$ ). As $x<y$ it must be that $x\left(c\left(x,y\right)\right)=0$ and $y\left(c\left(x,y\right)\right)=1$, but then there is no room for $z\left(c\left(x,y\right)\right)$ — contradiction. 

Now suppose that the claim is true for r. By induction, there is a coloring $d:\Sh_r^{sym}(\beth_{r}\left(\kap\right))\to2^{\kap}$. Let $\psi:\Sh_{r+1}^{sym}(\beth_{r}\left(\kap\right))\to \Sh_2^{sym}(2^{\kap})$ be the following homomorphism. Given $u=\left(u_{0},\ldots,u_{r}\right)\in \Sh_{r+1}^{sym}(\beth_{r}\left(\kap\right))$, let $\psi\left(u\right)=\left(d\left(u_{0},\ldots,u_{r-1}\right),d\left(u_{1},\ldots,u_{r}\right)\right)$. Note that by choice of $d$, \[d\left(u_{0},\ldots,u_{r-1}\right)\neq d\left(u_{1},\ldots,u_{r}\right).\] In addition, if $u$ and $v$ are connected in $\Sh_{r+1}^{sym}(\beth_{r}\left(\kap\right))$, then easily $\psi\left(u\right)$ and $\psi\left(v\right)$ are distinct (because if not, then $\psi\left(u\right)_{0}=\psi\left(v\right)_{0}=\psi\left(u\right)_{1}$ contradiction) and connected in $\Sh_2^{sym}(2^{\kap})$. Hence we are done by Lemma \ref{L:basic-prop-chi}(3). 

As for the second inequality, let $c:\Sh_r(\beth_{r-1}\left(\kap\right)^{+})\to \kap$ be a coloring. The coloring $c$ induces a coloring on $\Sh_r(\beth_{r-1}\left(\kap\right)^{+})$, by Erd\"os-Rado, there is a subset $U\subseteq \beth_{r-1}\left(\kap\right)^{+}$ of cardinality $\kap^+$ such that $c\restriction [U]^r$ is constant, i.e. every $r$-tuple of increasing elements from $U$ is colored by the same color.  As a consequence, there cannot be an edge between any $u,v\in [U]^r$. Indeed, let $u\in [U]^r$ be any element. Let $v\in [U]^r$ be defined by $v(i)=u(i+1)$ for $0\leq i<r-1$ and $v(r-1)=u(1)$. They are obviously connected by an edge.
\end{proof}

\end{example}

\section{Embedding a Shift Graph}
The aim of this section is to present some general assumptions on a graph $G$ that will imply that $G$ contains the finite subgraphs of some shift graph.
\subsection{Reducing Injective Homomorphisms to Homomorphisms}
As a first result we prove the following, probably well known, proposition. By Lemma \ref{L:basic-prop-chi}(3), if there is a homomorphism $\varphi:H\to G$ then $\chi(H)\leq \chi(G)$. In particular, if $H$ is a shift graph then there are elementary extensions of $G$ with arbitrary large chromatic numbers. Indeed, one may take elementary extensions of the structure $(H,G,\varphi)$ and apply Fact \ref{F:shft-largchr}.

\begin{fact}\cite[Theorem 1]{ER-comb}\label{F:ER-comb}
Let $R$ be an equivalence relation on $(\omega^{\U n})_<$. Then there exists an infinite subset $N\subseteq \omega$ and $0\leq i_1<\dots<i_m\leq n-1$ such that for $\bar a,\bar b\in (N^{\U n})_<$,
\[\bar a\mathrel{R} \bar b \iff \bigwedge_{j=1}^m a_{i_j}=b_{i_j}.\]
\end{fact}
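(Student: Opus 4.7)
The plan is to prove this as the classical Erd\H{o}s--Rado canonical Ramsey theorem (which is the cited reference), via an iterated application of the ordinary Ramsey theorem. For each $n\le k\le 2n$ and each ``interleaving pattern'' $\pi$ describing how to draw a pair of strictly increasing $n$-tuples $(\bar a,\bar b)$ from a $k$-element subset of $\omega$ -- formally, a pair of strictly increasing injections $\{0,\dots,n-1\}\to\{0,\dots,k-1\}$ whose images together cover $\{0,\dots,k-1\}$ -- define a $2$-coloring $c_\pi$ of $[\omega]^k$ by setting $c_\pi(T)=1$ iff the extracted pair satisfies $\bar a\R\bar b$. There are only finitely many such $(k,\pi)$, so iterating Ramsey's theorem produces an infinite $N\subseteq\omega$ simultaneously homogeneous for every $c_\pi$.

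On such an $N$, whether $\bar a\R\bar b$ for $\bar a,\bar b\in(N^{\U n})_<$ depends only on the interleaving pattern of the pair, namely on the set of coincidences $\{(i,j):a_i=b_j\}$ together with the order pattern of the remaining coordinates. This reduces the problem to a finite combinatorial classification: which families of patterns can describe an equivalence relation? The crucial observation is that if a pattern $\pi$ with $\bar a\neq\bar b$ witnesses $R$-equivalence and $\pi$ forces some equality $a_i=b_j$ with $i\neq j$, then instantiating $\pi$ in the degenerate case $\bar a=\bar b$ would force $a_i=a_j$ inside a strictly increasing tuple, which is impossible. Hence every equality in a witnessing pattern has matching indices, and each such pattern is characterized by a subset $T\subseteq\{0,\dots,n-1\}$ of forced diagonal coincidences, together with the order pattern on the disjoint remainder.

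The final step is to show that the collection of witnessing subsets collapses to a single fixed one. Let $S\subseteq\{0,\dots,n-1\}$ be the intersection of all such $T$; then, using reflexivity, symmetry, and chains of transitivity (inserting intermediate tuples $\bar c\in(N^{\U n})_<$ with prescribed overlap with each of $\bar a$ and $\bar b$), one shows that $\bar a\R\bar b$ already follows from $\bigwedge_{i\in S}a_i=b_i$ alone, while the minimality of $S$ precludes anything weaker from sufficing. Writing $S=\{i_1<\cdots<i_m\}$ gives the desired biconditional. The main delicacy is in this last step: ensuring enough room inside $N$ to realize the interpolating tuples $\bar c$ with all prescribed overlaps, which can be arranged either by requiring the homogeneous $N$ produced above to be sparse enough, or by thinning $N$ one more time to a suitable infinite subset.
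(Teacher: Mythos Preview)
The paper does not prove this statement; it is stated as a Fact with a citation to Erd\"os--Rado, so there is no proof in the paper to compare against. Your overall strategy --- homogenize via iterated Ramsey, then classify which patterns can arise from an equivalence relation --- is the standard one and is correct in outline.

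Your ``crucial observation'', however, is wrong. You assert that a witnessing pattern cannot contain an off-diagonal equality $a_i=b_j$ with $i\neq j$, arguing that ``instantiating $\pi$ in the degenerate case $\bar a=\bar b$ would force $a_i=a_j$''. But the diagonal $\bar a=\bar b$ is simply a different pattern from $\pi$; reflexivity of $R$ says nothing about the shape of other witnessing patterns. In fact the conclusion is false: for $n=3$ with the canonical relation given by $a_0=b_0$, the pair $\bar a=(0,1,2)$, $\bar b=(0,2,3)$ is $R$-equivalent and carries the off-diagonal coincidence $a_2=b_1$. So witnessing patterns are not characterized by a diagonal set $T$ together with an order pattern, and the step ``hence every equality in a witnessing pattern has matching indices'' fails. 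The remaining analysis --- defining $S$ and running transitivity chains --- can be made to work, but it must handle off-diagonal coincidences and the irrelevance of the interleaving order directly, and that is exactly where the substance of the canonical Ramsey theorem lies; your sketch glosses over it rather than confronting it.
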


\begin{proposition}\label{P:homomorphism-is-enough}
Let $G=(V,E)$ be a graph and assume there exists an homomorphism of graphs $t:\Sh_k(\omega)\to G$. Then there exists $n\leq k$, such that 
\begin{itemize}
\item[($\dagger$)] $G$ contains all finite subgraphs of $\Sh_n(\omega)$.
\end{itemize}

Consequently, if $H$ is a graph that contains all finite subgraphs of $\Sh_k(\omega)$, for some $k$, and $t:H\to G$ is a homomorphism of graphs, then there exists some $n\leq k$ such that $G$ satisfies ($\dagger$).
\end{proposition}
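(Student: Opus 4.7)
The plan is to apply Fact~\ref{F:ER-comb} to the equivalence relation on $(\omega^{\U k})_<$ defined by $\bar a \R \bar b \iff t(\bar a) = t(\bar b)$. This yields an infinite $N \subseteq \omega$ and indices $0 \leq i_1 < \dots < i_m \leq k-1$ such that, on $(N^{\U k})_<$, two tuples are $R$-equivalent iff they agree on the coordinates $I = \{i_1, \dots, i_m\}$. Since $G$ is irreflexive and $t$ is a homomorphism, no $\Sh_k$-edge can collapse under $t$, so $m \geq 1$; and $t \restriction (N^{\U k})_<$ factors as $\bar t \circ \pi$, where $\pi : (N^{\U k})_< \to (N^{\U m})_<$ is the projection to the coordinates in $I$ and $\bar t : (N^{\U m})_< \to G$ is injective.

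I will set $n = m \leq k$ and, for each finite $F \subseteq \omega$, construct an injective graph homomorphism $\bar t \circ h_F : \Sh_n(F) \to G$. After identifying $N$ order-isomorphically with $\omega$, let $B_1, \dots, B_r$ be the maximal contiguous blocks of $I$, and let $s(j) \in \{1, \dots, r\}$ denote the block index of $i_j$. Fix any integer $L > rk$ and set
\[
h_F(a_0, \dots, a_{m-1}) \;=\; \bigl(a_0 L + s(1) k,\; a_1 L + s(2) k,\; \dots,\; a_{m-1} L + s(m) k\bigr).
\]
A direct check shows that $h_F$ is injective with image in $(N^{\U m})_<$. The task then reduces to: for each shift-edge $v = (a_0, \dots, a_{m-1}) \sim v' = (a_1, \dots, a_{m-1}, c)$ of $\Sh_n(F)$, produce $\bar a \sim \bar b$ in $\Sh_k(N)$ with $\pi(\bar a) = h_F(v)$ and $\pi(\bar b) = h_F(v')$; this forces $t(\bar a) \E t(\bar b)$ and hence $\bar t(h_F(v)) \E \bar t(h_F(v'))$ in $G$.

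The construction of this lift is the main obstacle. One sets $\bar a_{i_j} = h_F(v)_j$ and fills in the remaining positions $\ell \notin I$ from $N$ so that $\bar a$ is strictly increasing and the right-shift $\bar b_\ell = \bar a_{\ell+1}$ (with $\bar b_{k-1}$ chosen freely when $i_m = k-1$) realizes $\pi(\bar b) = h_F(v')$. Inside any block of $I$ the relation $y_j = x_{j+1}$ holds automatically because $s(j) = s(j+1)$; across a block boundary one is forced to set $\bar a_{i_j + 1} = y_j = a_j L + s(j) k$, and the offset grid has been arranged so that this value lies strictly between $\bar a_{i_j}$ and $\bar a_{i_{j+1}}$, while the open interval $\bigl(a_j L + s(j) k,\, a_j L + (s(j)+1) k\bigr)$ still contains $k - 1 \geq i_{j+1} - i_j - 2$ integers into which the remaining intermediate free coordinates fit; the boundary case $i_m = k-1$ is handled by $\bar b_{k-1} = cL + s(m) k$. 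All strict inequalities and counting constraints are automatic from $L > rk$. Finally, for the ``consequently'' clause, a standard compactness argument applied to the two-sorted structure $(H, G, t)$, together with the constants realizing all vertices of $\Sh_k(\omega)$, yields an elementary extension $(H^*, G^*, t^*)$ in which a copy of $\Sh_k(\omega)$ embeds into $H^*$; restricting $t^*$ to this copy gives a homomorphism $\Sh_k(\omega) \to G^*$, and the main statement applied in $G^*$ combined with elementary equivalence transfers the finite subgraph containment back to $G$.
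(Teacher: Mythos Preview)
Your argument is correct. Both you and the paper begin identically: apply Fact~\ref{F:ER-comb} to the equivalence relation $t(\bar a)=t(\bar b)$, obtain $N$ and $I=\{i_1,\dots,i_m\}$, observe $m\geq 1$, and decompose $I$ into maximal contiguous blocks. From there the approaches diverge.

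You take $n=m$ and work entirely inside $\omega$ (after identifying $N\cong\omega$), using the arithmetic spacing $h(a_0,\dots,a_{m-1})=(a_0L+s(1)k,\dots,a_{m-1}L+s(m)k)$ to guarantee that every $\Sh_m$-edge lifts to a $\Sh_k$-edge; the offset $s(j)k$ leaves exactly the room needed to thread the free coordinates across block boundaries. This is a clean, self-contained combinatorial construction. The paper instead takes $n=n_{\max}+1$, where $n_{\max}$ is the length of the longest block, and passes to an elementary extension in which $(N,<)$ becomes $(I_{\text{blocks}}\times\mathbb{Z},<_{\text{lex}})$; the $\mathbb{Z}$-copies give unlimited room for the free coordinates, and only the longest block constrains the shift length. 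Since $n_{\max}+1\leq m\leq k$, the paper obtains a sharper value of $n$ (e.g.\ for $k=3$, $I=\{0,2\}$ the paper gets $n=1$, a clique, while you get $n=2$), but the proposition only demands $n\leq k$, so your bound suffices. One minor point: your map $h_F$ does not actually depend on $F$, so you have in fact produced a single injective homomorphism $\Sh_m(\omega)\to G$ rather than one per finite $F$; this only strengthens the conclusion. The ``consequently'' clause is handled the same way in both proofs.
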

\begin{proof}
Assume that $t=t(x_0,\dots,x_{k-1})$. The relation $t(\bar a)=t(\bar b)$ for $\bar a,\bar b\in (\omega^{\U k})_<$, is an equivalence relation on $(\omega^{\U k})_<$. By Fact \ref{F:ER-comb}, there exists an infinite subset $N\subseteq \omega$ and $0\leq i_1<\dots<i_m\leq k-1$ such that for $\bar a,\bar b\in (N^{\U k})_<$
\[t(\bar a)=t(\bar b) \iff \bigwedge_{j=1}^m a_{i_j}=b_{i_j} \tag{$\dagger\dagger$}.\]
Note that $m\geq 1$ since otherwise $t(\bar a)=t(\bar b)$ for any $\bar a$ and $\bar b$, but this is impossible since there are $\bar a,\bar b\in (N^{\U k})_<$ that are connected by an edge.

Let $S=\{i_1,\dots, i_m\}$. There exists a unique set $I\subseteq \{1,\dots,m\}$ and a unique sequence of natural numbers $\bar n=\langle n_j:j\in I\rangle$ such that $S=\bigcup_{j\in I}[i_j, i_j+n_j]$ and each interval $[i_j, i_j+n_j]$ is maximal with respect to containment.

Consider the first-order structure $M=((N,<),G=(V,E), t:(N^{\U k})_<\to G)$. Since $(\dagger)$ and ($\dagger\dagger$) are elementary properties, replacing $M$ by an elementary extension, we may assume that $(N,<)=(I\times\mathbb{Z},<_{\text{lex}})$.

We define an injective homomorphism $\Sh_{n+1}(\omega)\to G$, where $n=\max_{j\in I}\{n_j\}$. For any $f\in (\omega^{\U{n+1}})_<$ we associate $\psi_f\in V$. For that we first define $\eta_f\in ((I\times\mathbb{Z})^{\U k})_<$ and then set $\psi_f=t(\eta_f)$. For any $j\in I$ and $0\leq r\leq n_j$ we define
\[\eta_f(i_j+r)=(j,f(r)).\]
For $0\leq i\leq k-1$ with $i\notin S$, $(*)$ we may set $\eta_f(i)$ any way we want provided $\eta_f$ is increasing, which we can since we have copies of $\mathbb{Z}$. Note that this is will not influence $t(\eta_f)$ by ($\dagger\dagger$).

We check that $f\mapsto \psi_f$ is an injective homomorphism. Injectivity: if $t(\eta_f)=t(\eta_g)$ then by ($\dagger\dagger$), $\eta_f(i)=\eta_g(i)$ for all $i\in S$. In particular for $j\in I$ with $n_j=n$ and for any $0\leq r\leq n$, $f(r)=g(r)$, as needed.

Homomorphism: let $f,g\in \Sh_{n+1}(\omega)$ and assume without loss of generality that for every  $1\leq r\leq n$, $f(r)=g(r-1)$ and in case $n=0$ assume that $f(0)<g(0)$. 
Using $(*)$, we may assume that $\eta_f(i)=\eta_g(i-1)$ for $1\leq i\leq k-1$. 
Indeed, consider the following modification on $\eta_g$. For every $j\in I$ and $0 \leq r\leq n_j$ keep $\eta_g(i_j+r)$ as before, and if $i_j>0$ then set
\[\eta_g(i_j-1)=(j,f(0)).\]
Note that if $n\neq 0$ then
\[\eta_g(i_j-1)=(j,f(0))<(j,f(1))=(j,g(0))=\eta_g(i_j)\]
and if $n=0$ then
\[\eta_g(i_j-1)=(j,f(0))<(j,g(0))=\eta_g(i_j).\]

Hence $\eta_g$ restricted to $S\cup \{i_j-1: j\in I,\, i_j>0\}$ is increasing. For any other $i\in I$ set $\eta_g(i)$ any way we want provided $\eta_g$ is increasing.

Define $\eta\in ((I\times\mathbb{Z})^{\U k})_<$ by 
\[\eta(i)=\eta_g(i-1)\]
for all $1\leq i\leq k-1$. If there exists $j\in I$ with $i_j=0$ define $\eta(0)=(0,f(0))$. Note that then if $n\neq 0$ then $\eta(0)=(0,f(0))<(0,f(1))=(0,g(0))=\eta_g(0)=\eta(1)$ and if $n=0$ then $\eta(0)=(0,f(0))<(0,g(0))=\eta_g(0)=\eta(1)$. Otherwise define $\eta(0)$ to be a new element smaller than any element we have encountered in $\eta_g$. If we show that $\eta(i)=\eta_f(i)$ for all $i\in S$ then this would imply that $t(\eta)=t(\eta_f)$. Since $\eta$  and $\eta_g$ are connected by an edge and $t$ is a homomorphism it follows that $\psi_f=t(\eta_f)$ and $\psi_g=t(\eta_g)$ are connected by an edge, as required.

Let $j\in I$ and $0\leq r\leq n_j$. If $r\geq 1$ then
\[\eta_f(i_j +r)=(j,f(r))=(j,g(r-1))=\eta_g(i_j +r-1)=\eta (i_j+r).\]
If $r=0$ and $i_j>0$ then
\[\eta_f(i_j)=(j,f(0))=\eta_g(i_j-1)=\eta(i_j).\]
Finally, if $r=0$ and $i_j=0$ then
\[\eta_f(0)=(0,f(0))=\eta(0).\]
%
%

As for the ``consequently" part, consider $(H,t,G)$ as a first order structure. In an elementary extension $(H,t,G)\prec (\mathcal{H},t,\mathcal{G})$, $\mathcal{H}$ contains $\Sh_n(\omega)$ as a subgraph. Restricting $t$ to $\Sh_n(\omega)$ and applying the above, $\mathcal{G}$ contains all finite subgraphs of $\Sh_n(\omega)$ for some $n\leq k$. As a result, so does $G$.
\end{proof}

\subsection{Variants of the Shift Graph}
Let $A$ and $J$ be two (possibly linearly ordered) sets.

\begin{definition}\label{D:f_a,b}
For any $\bar a,\bar b\in A^{\U J}$ (respectively, $(A^{\U J})_<$), let $f_{\bar a,\bar b}= \{(i,j)\in J\times J: a_i=b_j\}$.
\end{definition}

Since the tuples $\bar a $ and $\bar b$ are without repetitions, $f_{\bar a,\bar b}$ is a (possibly empty) injective partial function. If $\bar a, \bar b\in (A^{\U J})_<$ then $f_{\bar a,\bar b}$ is order-preserving, i.e. for all $i<j\in \dom(f)$, $f(i)<f(j)$.

\begin{definition}\label{D:E_f,D_f}
Let $Id\neq f\subseteq J\times J$ be a partial function. We define a graph $E^A_f$ and a directed graph $D_f^A$ on $A^{\U J}$:
\begin{list}{•}{}
\item $\bar a \E^A_f \bar b \iff f_{\bar a,\bar b}=f \vee f_{\bar b,\bar a}=f$
\item  $\bar a \D^A_f \bar b \iff f_{\bar a,\bar b}=f.$
\end{list} 
Similarly for $(A^{\U J})_<$. We omit $A$ from $E_f^A$ and $D_f^A$ when it is clear from the context.
\end{definition}

\begin{remark}
We required $f\neq Id$ in order to ensure irreflexivity.

A homomomorphism between directed graphs is map preserving the directed graph relation.

Since the symmetric closure of the relation $D_f$ is exactly $E_f$, any homomorphism of directed graphs $(A_1^{\U{J_1}}, D_{f_1})\to (A_2^{\U{J_2}}, D_{f_2})$ is also a homomorphism of graphs $(A_1^{\U{J_1}}, E_{f_1})\to (A_2^{\U{J_2}}, E_{f_2})$, and similarly in the ordered case. 
\end{remark}

\begin{example}
When $J=n$ and $f=\{(i,i-1):1\leq i\leq n-1\}$, $(A^{\U n},E_f)$ is exactly $\Sh^{sym}_n(A)$ and $((A^{\U n})_<,E_f)$ is exactly $\Sh_n(A)$.
\end{example}

\begin{definition}
Let $\LSh_n(A)=((A^{\U n})_<,D_f)$, where $f=\{(i,i-1): 1\leq i\leq n-1\}$, and $\RSh_n(A)=((A^{\U n})_<,D_f)$, where $f=\{(i-1,i): 1\leq i\leq n-1\}$.
\end{definition}

\begin{lemma}\label{L:embedding-intertwined}
Let $(J,<)$ be a finite linearly ordered set and $Id\neq f\subseteq J\times J$ a non-empty partial function. Assume that 
\begin{enumerate}
\item $f$ is order preserving, i.e. for all $i<j\in \dom(f)$, $f(i)<f(j)$,
\item all orbits in $f$ are \emph{increasing}, i.e. for all $i\in \dom(f)$, $i<f(i).$ \end{enumerate}
Let $G=((\mathbb{Q}^{\U J})_<, D_f)$ be the directed graph structure defined by $f$.
Then for any large enough $k\in \mathbb{N}$ there exists a homomorphism of directed graphs $\varphi: \LSh_k(\omega)\to ((\mathbb{Q}^{\U J})_<, D_f)$.
%

\end{lemma}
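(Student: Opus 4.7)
My plan is to construct, for every sufficiently large $k$, an explicit map $\varphi\colon(\omega^{\U k})_<\to(\mathbb{Q}^{\U J})_<$ that sends directed edges of $\LSh_k(\omega)$ to directed edges of $((\mathbb{Q}^{\U J})_<,D_f)$. Decompose $f$ into its orbits $O_1,\dots,O_p$, each of the form $O_q=\{i_0^q<i_1^q<\dots<i_{\ell_q}^q\}$ with $f(i_r^q)=i_{r+1}^q$, and write $q(j),r(j)$ for the orbit and orbit-position of $j\in J$. I would set
\[
\eta_s(i_r^q)\;:=\;-s_{\tau_q-r}\;+\;B_q,
\]
where $\tau_q\in\mathbb{Z}_{\geq 0}$ and $B_q\in(0,1)\cap(\mathbb{Q}\setminus\mathbb{Z})$ are to be chosen, and put $\varphi(s):=\eta_s$. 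Shift-invariance $\eta_s(i_r^q)=\eta_t(i_{r+1}^q)$ is built into the formula: if $t$ is the left shift of $s$ then $t_{\tau_q-r-1}=s_{\tau_q-r}$ (provided $\tau_q-r-1<k-1$), so both sides evaluate to $-s_{\tau_q-r}+B_q$.

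Set $\mu(j):=\tau_{q(j)}-r(j)$. I would pick the $\{B_q\}$ pairwise distinct and require $\mu$ to be non-increasing on $(J,<)$, with ties $\mu(j_1)=\mu(j_2)$ for $j_1<j_2$ in distinct orbits $q,q'$ broken by $B_q<B_{q'}$. Since $|B_q-B_{q'}|<1$, whenever $\mu(j_1)>\mu(j_2)$ the integer gap $s_{\mu(j_1)}-s_{\mu(j_2)}\geq 1$ strictly dominates the $B$-offset, giving strict $<$-increase across $J$; when $\mu(j_1)=\mu(j_2)$, the $B$-ordering delivers strict increase directly. For the non-equality side, rearranging a putative $\eta_s(j_1)=\eta_t(j_2)$ yields
\[
s_{\mu(j_2)+1}-s_{\mu(j_1)}\;=\;B_{q(j_2)}-B_{q(j_1)}.
\]
In the cross-orbit case the right-hand side is a nonzero non-integer rational and the left-hand side is an integer, a contradiction; in the same-orbit case the right-hand side vanishes, forcing $\mu(j_1)=\mu(j_2)+1$, equivalently $r(j_2)=r(j_1)+1$, i.e.\ $j_2=f(j_1)$, so $(j_1,j_2)\in f$ as required.

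The real work is producing an integer solution $\{\tau_q\}$ to the linear system $\tau_{q_1}-\tau_{q_2}\geq r(j_1)-r(j_2)$ over all cross-orbit adjacent-in-$J$ pairs $j_1<j_2$. By Bellman--Ford, this reduces to showing no positive-weight directed cycle in the associated orbit constraint graph, and here hypothesis~(1) enters crucially. Iterating $f$ (and $f^{-1}$ on $\Img f$) shows that for distinct orbits $q,q'$ the relation $i_a^q<i_b^{q'}$ depends only on $a-b$, so there is a threshold $T(q,q')\in\mathbb{Z}$ with $i_a^q<i_b^{q'}\iff a-b\leq T(q,q')$. Since exactly one of the two strict inequalities holds, $T(q,q')+T(q',q)=-1$; and chaining two witnesses yields the subadditivity $T(q_1,q_3)\geq T(q_1,q_2)+T(q_2,q_3)$. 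Consequently every directed cycle $q_1\to q_2\to\cdots\to q_n\to q_1$ has total weight at most $T(q_1,q_1)=-1<0$, so the $\tau$-system is feasible. Choosing $k>\max_q\tau_q+1$ and $B_q$'s as above then yields the desired homomorphism.

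The step I expect to be the main obstacle is making the subadditivity of $T$ fully rigorous for short orbits: aligning the two chosen witnesses $(a,b)$ and $(b',c)$ under the $f$-invariance requires moving both indices by a common shift, which may fall outside the valid index ranges when some $\ell_q$ is small. I plan to handle this by a preliminary reduction that embeds $f$ into an order-preserving partial function $f'$ on a larger linearly ordered set $J'\supseteq J$ in which every orbit has length at least $|J|$; the chaining then goes through uniformly, and restricting the resulting homomorphism back to $J$ gives the map sought for the original $f$.
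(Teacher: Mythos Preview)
Your approach is correct and genuinely different from the paper's. The paper proceeds by induction on the number of orbits $|I|=|\dom(f)\setminus\Rg(f)|$: after first reducing to $J=\dom f\cup\Rg f$, it extends $f$ so that the orbit of the minimal element $\beta_0$ reaches $\max J$, uses that orbit as a ``spine'' indexed by the coordinates of $\mu\in\LSh_k(\omega)$, and attaches the remaining orbits via the inductive hypothesis through a pairing map $\phi:\omega\times(A\cup\{-1\})\to A$. Your construction is non-inductive: you parametrise by orbit offsets $\tau_q$, reduce the monotonicity requirement to a system of difference constraints, and certify feasibility via a no-positive-cycle argument using the threshold function $T(q,q')$. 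This is more structural and arguably cleaner once the constraint-system viewpoint is in place; the paper's argument is more explicit and avoids the detour through Bellman--Ford.

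A few points to tighten. First, you never say what $\eta_s$ does on elements of $J$ outside all orbits; handle this exactly as the paper does, by first reducing to $J=\dom f\cup\Rg f$ (filling in the remaining coordinates from disjoint dense sets). Second, the threshold $T(q,q')$ is already well-defined for the \emph{original} finite orbits: your forward/backward propagation from $i_a^q<i_b^{q'}$ to $i_{a'}^q<i_{b'}^{q'}$ with $a'-b'=a-b$ always stays in range, since the intermediate indices lie between the endpoints. It is only the \emph{chaining} step for subadditivity that can fall out of range, and your proposed fix works; a clean implementation is to extend $f$ to a piecewise-linear order automorphism $\sigma$ of $\mathbb{R}$, observe that $f(x)>x$ on $\dom f$ forces $\sigma(x)>x$ everywhere, and pass to the bi-infinite $\sigma$-orbits, where the chaining is immediate. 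Third, the acyclicity of the tie-breaking order on the $B_q$'s is not automatic and should be checked; it follows from the same cycle bound $\sum_i T(q_i,q_{i+1})\le -1$. Finally, ``length at least $|J|$'' is not obviously sufficient for the shift alignment --- you want the extended orbits long relative to $\max|T|$ --- but since you are free to extend as far as you like before fixing $k$, this is harmless.
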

\begin{proof}


Let $A$ be the ordinal $\omega^\omega$, seen as a substructure of $\mathbb{Q}$. As $J$ is finite, we may assume that $(J,<)\subseteq (\mathbb{Q},<)$.

\begin{claim}
There is no harm in replacing $J$ by $\dom(f)\cup \Rg(f)$ and $\mathbb{Q}$ by $A$.
\end{claim}
\begin{claimproof}
Inductively, for every $u\in \LSh_k(\omega)$ choose a dense subset $Q_u\subseteq \mathbb{Q}$ such that for every $u\neq v\in \LSh_k(\omega)$, $Q_u\cap Q_v=\emptyset$ and $Q_u\cap \omega^\omega=\emptyset$. 

Now, let $\widehat J=\dom(f)\cup \Rg(f)$ and assume we have a homomorphism $\varphi: \LSh_k(\omega)\to ((\omega^\omega)^{\U{\widehat J}})_<,D_f)$. For each $u\in \LSh_k(\omega)$, extending $\varphi(u)$ to an increasing $J$-tuple of elements from $\mathbb{Q}$ by adding elements from $Q_u$, defines a map $\varphi^\prime: \LSh_k(\omega)\to ((\mathbb{Q}^{\U{J}})_<,D_f)$. Since $\widehat J=\dom(f)\cup \Rg(f)$ and passing from $\varphi(u)$ to $\varphi^\prime (u)$ adds only new elements, $\varphi^\prime$ is a homomorphism of directed graphs. 

\end{claimproof}

Let $I=\dom(f)\setminus \Rg(f)$. We prove by induction on $|I|$ that for any large enough $k$ there exists a homomorphism $g:\LSh_k(\omega)\to G=((A^{\U J})_<, D_f)$, where $J=\dom(f)\cup \Rg(f)$ is nonempty. 

For any $\beta\in I$ let $n_\beta$ be the maximal natural number $n\geq 1$ such that $f^{n-1}(\beta)\in \dom (f)$.  Note that \[\dom (f)=\bigcup_{\beta \in I} \{\beta,\dots, f^{n_\beta-1}(\beta)\}\]
and that 
 \[J=\dom (f)\cup\Rg(f)=\bigcup_{\beta \in I} \{\beta,\dots, f^{n_\beta}(\beta)\}.\]
%
Let $\beta_0$ be the minimal element of $I$. Note that $\beta_0$ is also the minimal element of $J$.

\begin{claim}
%
There exist $J\subseteq \widetilde J\subseteq \mathbb{Q}$ and $f\subseteq \widetilde f\subseteq  \widetilde J\times\widetilde J$ such that 
\begin{list}{•}{}
\item $(1),(2)$ of the lemma hold for $\widetilde J$ and $\widetilde f$,
\item $\widetilde f\cap  (J\times J)=f$,
\item $\dom(\widetilde f)\setminus \Rg(\widetilde f)=I$,
\item $\min \widetilde J=\beta_0$ and that
\item $(\star)$ letting $\widetilde n_{\beta_0}$ be the maximal natural number $n\geq 1$ such that $f^{n-1}(\beta_0)\in \dom(\widetilde f)$,  $f^{\widetilde n_{\beta_0}}(\beta_0)=\max \widetilde J$.

\end{list}
  
\end{claim}
\begin{claimproof}

If $(\star)$ holds for $f$ and $J$, we are done. 
Otherwise, let $i\in \Rg(f)$ be minimal such that $f^{n_{\beta_0}}(\beta_0)<i$ and let $j$ be such that $f(j)=i$. Either $j<f^{n_{\beta_0}}(\beta_0)$ or $f^{n_{\beta_0}}(\beta_0)<j$. If the former happens let $f^\prime=f$, so assume it is the latter, i.e. that $f^{n_{\beta_0}}(\beta_0)<j<i$ (so $j\in\dom(f)\setminus \Rg(f)$). Let $f^\prime=f\cup\{(f^{n_{\beta_0}}(\beta_0),y)\}$ for some $j<y\in \mathbb{Q}\setminus J$ which satisfies $y<x$ for all $j<x\in J$ and $J^\prime=J\cup \{y\}$. It is still order preserving and still has increasing orbits. Let $n_{\beta_0}^\prime$ be as in $(\star)$. 	

In either case, we have that $j<(f^\prime)^{n^\prime_{\beta_0}}(\beta_0)<i$. Since $f^\prime$ is order preserving we may extend it to an automorphism $\sigma$ of $\mathbb{Q}$ and thus $i=\sigma(j)<\sigma((f^\prime)^{n^\prime_{\beta_0}}(\beta_0))$. By careful adjustments we may assume that $\sigma((f^\prime)^{n^\prime_{\beta_0}}(\beta_0))\notin J$. Let $f^{\prime\prime}=f^\prime\cup\{((f^\prime)^{n^\prime_{\beta_0}}(\beta_0),\sigma((f^\prime)^{n^\prime_{\beta_0}}(\beta_0)))\}$ and let $J^{\prime\prime}=J\cup \{\sigma((f^\prime)^{n^\prime_{\beta_0}}(\beta_0))\}$. It is still order preserving and still has increasing orbits. 

Note that, \[|\{i\in \Rg(f^{\prime\prime}):i<(f^{\prime\prime})^{n^{\prime \prime}_{\beta_0}}(\beta_0)\}|<|\{i\in \Rg(f):i<f^{n_{\beta_0}}(\beta_0)\}|,\] where $n^{\prime \prime}_{\beta_0}$ is defined as is $(\star)$.
 
Continue doing this until $\Rg(f)$ is exhausted. Let $\widetilde f$ be the end function and let $\widetilde J=J\cup\dom(\widetilde f)\cup \Rg(\widetilde f)$. 

\end{claimproof} 

As a consequence of the claim we may assume that $(\star)$ holds for $f$ and $J$. Indeed, assume we found a homomorphism $\varphi:\LSh_k(\omega)\to ((A^{\U{\widetilde J}})_<,D_{\widetilde f})$, for some $k$. It is routine to check that composing $\varphi$ with the map \[((A^{\U{\widetilde J}})_<,D_{\widetilde f})\to ((A^{\U{J}})_<,D_{f})\] induced by the projection map $\pi:(A^{\U{\widetilde J}})_<\to (A^{\U{J}})_<$ is indeed a homomorphism (this uses the second bullet in the claim above). 

Let $J^\prime=J\setminus \{\beta_0,\dots, f^{n_{\beta_0}}(\beta_0)\}$. If $J^\prime=\emptyset$ let $g_k$ be the empty function for all $k\in \mathbb{N}$. Otherwise, by induction there exists $l\in \mathbb{N}$ such that for all $k\geq l$ there is a homomorphism $g_k:\LSh_{k}(\omega)\to ((A^{\U{J^\prime}})_<,D_{f\cap (J^\prime\times J^\prime)})$.  Let $k> \max\{n_\beta+1:\beta\in I\}\cup \{l\}$ and set some order isomorphism $\phi:\omega\times (A\cup\{-1\})\to A$, where $-1$ is a new element which is smaller than any element of $A$  (recall that $A=\omega^\omega$).

We construct a homomorphism mapping $\mu\in \LSh_k(\omega)$ to $\psi_\mu\in G$. Let $\mu \in \LSh_k(\omega)$. For any $0\leq h\leq n_{\beta_0}$ we define
\[\psi_\mu(f^h(\beta_0))=\phi(\mu(h),-1).\]
For any $\beta\in I$, with $\beta\neq \beta_0$, and $0\leq h\leq n_\beta$ we define
\[\psi_\mu(f^{h}(\beta))=\phi(\mu(\tilde h),g_k(\mu)(f^{h}(\beta))),\]
for $0\leq \tilde h\leq n_{\beta_0}$ maximal satisfying $f^{h}(\beta)>f^{\widetilde h}(\beta_0)$, which exists by minimlaity of $\beta_0$.

We check that $\psi_\mu$ is increasing and that $\mu\mapsto \psi_\mu$ is a homomorphism.

To show that $\psi_\mu$ is increasing, suppose $f^{h_1}(\beta_1)<f^{h_2}(\beta_2)\in J$ and go over the different possibilities. Note that we use $-1$ in the case when $\beta_1=\beta_0, \beta_2\neq \beta_0$ and $\widetilde h_2=h_1$.
%


We show that $\mu\mapsto \psi_\mu$ is a homomorphism. Suppose that $\mu,\nu\in \LSh_k(\omega)$ are such that $\nu(n+1)=\mu(n)$ for all $0\leq n<k-1$. We need to check that $f(i)=j$ if and only if $\psi_\mu(i)=\psi_\nu (j)$.

Assume that $f(i)=j$ (so $i\in \dom(f)$). Suppose that $i=f^h(\beta_0)$ for some $0\leq h<n_{\beta_0}$, so $j=f^{h+1}(\beta_0)$. Then
\[\psi_\mu(i)=\phi(\mu (h),-1)=\phi(\nu(h+1),-1)=\psi_\nu(f^{h+1}(\beta_0)).\]
Now suppose that $i=f^{h}(\beta)$ for some $\beta\neq \beta_0$ and $0\leq h<n_\beta$, so $j=f^{h+1}(\beta)$. Let $\widetilde h$ be maximal such that $f^{\widetilde h}(\beta_0)<f^h(\beta)$. Note that by the claim, $\widetilde h<n_{\beta_0}$. It follows that $f^{h+1}(\beta)$ is defined and $f^{\widetilde h+1}(\beta_0)<f^{h+1}(\beta)$. On the other hand, it can not be that $f^{\widetilde h+2}(\beta_0)<f^{h+1}(\beta)$ (again, $f^{\widetilde h+2}(\beta_0)$ is defined by the claim) for then we would have $f^{\widetilde h+1}(\beta_0)<f^h(\beta)$, contradicting the maximality of $\widetilde h$. It follows that $\widetilde h+1=\widetilde{h+1}$. Since $g_k$ is a homomorphism,
\[\psi_\mu(i)=\phi(\mu(\widetilde h),g_k(\mu)(f^h(\beta)))=\phi(\nu(\widetilde h+1),g_k(\nu)(f^{h+1}(\beta))=\psi_\nu(j).\]

Now assume that $\psi_\mu(i)=\psi_\nu(j)$. If $i=f^{h}(\beta_0)$ for some $0\leq h\leq n_{\beta_0}$ then $j=f^{h^\prime}(\beta_0)$ for some $h^\prime$ (since $\psi_\mu(j)$ has the form $\phi(-,-1)$) and so $\mu(h)=\nu(h^\prime)$. By the choice of the $k$, $h+1<k$ and consequently $\mu(h)=\nu(h+1)=\nu(h^\prime)$ so $h^\prime=h+1$ and $f(i)=j$. 


Suppose $i=f^{h}(\beta)$ for some $\beta\neq \beta_0$ and $0\leq h\leq n_\beta$. As this is encoded by $\phi$, by the assumption necessarily $j=f^{h^\prime}(\beta^\prime)$ for some $\beta^\prime\neq \beta_0$ and $0\leq h^\prime\leq n_{\beta ^\prime}$. Let $\widetilde h$ be maximal such that $f^{\widetilde h}(\beta_0)<i$ and $\widetilde h^\prime$ maximal such that $f^{\widetilde h^\prime}(\beta_0)<j$.  So $\psi_\mu(i)=\phi(\mu(\widetilde h),g_k(\mu)(i))$ and $\psi_\nu(j)=\phi(\nu(\widetilde h^\prime),g_k(\nu)(j))$. It follows that $g_k(\nu)(j)=g_k(\mu)(i)$ and we are done by the choice of $g_k$ since $i,j\in J^\prime$.
%
\end{proof}

Before continuing to the main proposition, as auxiliary results, we calculate the chromatic number of some (well known) graphs.

\begin{example}[Symmetric Cyclic Graph]\label{E:sym-cyclic}
Let $r>1$ be a natural number. Let $\Cyc_r^{sym}(A)$ be the graph on $A^{\U r}$ with an edge between $(a_0,\dots,a_{r-1})$ and $(b_0,\dots,b_{r-1})$ if $a_0=b_1,\dots, a_{r-2}=b_{r-1}, a_{r-1}=b_0$ (or vice-versa). We thus have a graph homomorphism $\Cyc_r^{sym}(A)\to \Sh_r^{sym}(A)$.

\begin{lemma}\label{L:sym-cyclic}
For every natural number $r>1$ and any set $A$, 
\[\chi\left(\Cyc_r^{sym}(A)\right)=
\begin{cases}
2 & \text{$r$ is even}\\
3 & \text{$r$ is odd}.
\end{cases}\]
\end{lemma}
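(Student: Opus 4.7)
The plan is to show that $\Cyc_r^{sym}(A)$ is, structurally, just a disjoint union of $r$-cycles, after which the claim is immediate from the well-known chromatic number of a cycle ($2$ if $r$ is even, $3$ if $r$ is odd). We may of course assume $|A|\geq r$, since otherwise the vertex set is empty.

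First I would observe that the cyclic shift $\sigma:(a_0,\ldots,a_{r-1})\mapsto (a_{r-1},a_0,\ldots,a_{r-2})$ is a permutation of $A^{\U r}$, and that the edges of $\Cyc_r^{sym}(A)$ are precisely the pairs $\{v,\sigma(v)\}$. Consequently every vertex has degree at most $2$, with its only possible neighbours being $\sigma(v)$ and $\sigma^{-1}(v)$. Thus the connected components of $\Cyc_r^{sym}(A)$ are exactly the $\sigma$-orbits (viewed as cycles going $v\to\sigma(v)\to\sigma^2(v)\to\cdots$).

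Next I would verify that each $\sigma$-orbit has length exactly $r$: if $\sigma^k(v)=v$ for some $0<k<r$ then $a_i=a_{i+k\bmod r}$ for every $i$, which contradicts the fact that the tuple $v=(a_0,\ldots,a_{r-1})$ has pairwise distinct entries (this uses $r>1$, so that $\sigma(v)\neq v$, and also ensures $\sigma(v)\neq \sigma^{-1}(v)$ when $r>2$; the case $r=2$ gives a disjoint union of edges, still of chromatic number $2$). Hence each connected component is a cycle $C_r$ of length exactly $r$.

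Finally, $\chi(C_r)=2$ when $r$ is even and $\chi(C_r)=3$ when $r$ is odd; since $\Cyc_r^{sym}(A)$ is a disjoint union of copies of $C_r$, Lemma \ref{L:basic-prop-chi}(1) (together with the obvious lower bound from containing a single $C_r$, using $|A|\geq r$) gives the claimed value. There is no real obstacle here — the whole argument is a direct identification of the graph with a union of cycles; the only mild care needed is the orbit-length computation, where injectivity of the tuples is used essentially.
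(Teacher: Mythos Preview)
Your proof is correct and follows essentially the same approach as the paper's: both identify the connected components of $\Cyc_r^{sym}(A)$ as $r$-cycles and then invoke the standard chromatic number of a cycle. You simply supply more detail (the orbit-length computation via injectivity of tuples, the degenerate cases), whereas the paper states the decomposition into $r$-cycles as a one-line observation.
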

\begin{proof}
The graphs $\Cyc_r^{sym}(A)$ partitions into connected components, each of them a cycle graph on $r$ vertices. It is well known and easy to see that you need $2$ colors to color even cycle graphs and $3$ colors to color odd cycle graphs.
\end{proof}
\end{example}

The next two examples are somewhat similar and they both have very small chromatic number. We define them and prove that their chromatic number is $2$.

\begin{example}[Denumerable Tuples Symmetric Shift Graph]\label{E:dnm-shft}
Let $A$ be an infinite set. The \emph{denumerable tuples symmetric shift graph} $\Sh_\omega^{sym}(A)$ is defined similarly as the symmetric shift graph but with vertices $A^{\U \omega}$. There is an edge between two vertices $f$ and $g$ if $f(n)=g(n+1)$ for all $n<\omega$ (or vice-versa).
\end{example}

\begin{example}[Glued Increasing Symmetric Shift Graphs]\label{E:Glued-Inc-shft}
Let $\bar n=\langle n_i:i<\omega\rangle$ be a strictly increasing sequence of natural numbers.  We define the graph $\Sh^{sym}_{\bar n,u}(A)$, for an infinite set $A$. The vertices are injective functions $\coprod_{i<\omega} [0,n_i]\to A$. Thus every vertex can be written as $f=\coprod_{i<\omega} f_i$.  We will say that there is an edge between two vertices $f$ and $g$ if \[f_i(m)=g_i(m+1)\] for every $0\leq m< n_i$, $i<\omega$ or
\[g_i(m)=f_i(m+1)\] for every $0\leq m< n_i$, $i<\omega$.
\end{example}

\begin{lemma}\label{L:Z-action}
Let $X$ be a set. Suppose $(n,x)\mapsto n+x$ is a free action of $\mathbb{Z}$ on $X$. We define a graph relation $\E$ on $X$ by setting that $x\E y$ if either $1+x=y$ or $1+y=x$. Then $\chi(X,E)=2$.
\end{lemma}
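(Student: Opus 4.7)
The plan is to exploit the orbit structure of the free $\mathbb{Z}$-action. Since the action is free, $X$ decomposes as a disjoint union of orbits $X=\bigsqcup_\alpha O_\alpha$, and on each orbit $O_\alpha$ the map $n\mapsto n+x_\alpha$ (for any fixed $x_\alpha\in O_\alpha$) is a bijection from $\mathbb{Z}$ onto $O_\alpha$. Under this bijection, the edge relation $E$ restricted to $O_\alpha$ is exactly the standard ``successor'' adjacency $n\sim n+1$ on $\mathbb{Z}$, i.e.\ $O_\alpha$ carries a bi-infinite path structure. Moreover, there are no edges between different orbits, since $x\E y$ forces $y\in\{1+x,-1+x\}\subseteq \mathbb{Z}\cdot x$.

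First, I would use the axiom of choice to pick a representative $x_\alpha\in O_\alpha$ for each orbit. Then I would define $c:X\to\{0,1\}$ by the rule $c(n+x_\alpha)=n\bmod 2$; freeness guarantees that every $x\in X$ has a unique representation $x=n+x_\alpha$, so $c$ is well-defined.

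Next, I would check that $c$ is a proper $2$-coloring. If $x\E y$, then up to swapping we have $y=1+x$; writing $x=n+x_\alpha$, we get $y=(n+1)+x_\alpha$ in the same orbit, whence $c(y)=(n+1)\bmod 2\neq n\bmod 2=c(x)$. Finally, to rule out $\chi(X,E)=1$, I would note that for any $x\in X$ the pair $\{x,1+x\}$ is an edge and $x\neq 1+x$ by freeness, so $E\neq\emptyset$ and hence $\chi(X,E)\geq 2$.

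There is no real obstacle here: the lemma is essentially the observation that a disjoint union of bi-infinite paths has chromatic number $2$, and the free $\mathbb{Z}$-action is just a convenient way to encode such a disjoint union. The only subtlety worth flagging is the implicit use of choice to pick one representative per orbit, which is needed when the number of orbits is large.
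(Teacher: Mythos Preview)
Your proof is correct and is essentially the same as the paper's: both choose a representative in each $\mathbb{Z}$-orbit (using choice) and color each element by the parity of its ``coordinate'' relative to that representative, then verify that an edge forces these parities to differ. The only addition in your version is the explicit check that $\chi\geq 2$, which the paper leaves implicit.
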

\begin{proof}
Since the action is free, for any $x,y\in X$ in the same $\mathbb{Z}$-orbit, $D_{x,y}=z$ for any $z\in\mathbb{Z}$ satisfying $z+x=y$ is well defined.

Let $\{x_r:r\in X/\mathbb{Z}\}$ be a set of representatives of the different orbits of the $\mathbb{Z}$ action on $X$.
We define a coloring $c:(X,E)\to \{0,1\}$ in the following way: For any vertex $x$ let $c(x)=\text{parity}(D_{x,x_r})$, where $x_r$ is the representative of the $\mathbb{Z}$-orbit of $x$.

We need to show that it is a legal coloring. Assume that $x$ and $y$ are connected by an edge. This means, without loss of generality, that $x=1+y$. But now by definition we cannot have $c(x)=c(y)$. Indeed, since $x$ and $y$ are in the same $\mathbb{Z}$-orbit, there are some $n$ and $m$ such that $x=n+x_r$ and $y=m+x_r$, for $r=\mathbb{Z}+x$. Consequently, $x=(n-m)+y$, and by freeness $n-m=1$. As a result, $c(x)\equiv c(y)+1 (\text{mod }2)$.
 
\end{proof}

\begin{lemma}\label{L:chr-dnm+gld-incr-shft}
For any infinite set $A$ and a strictly increasing sequence of natural numbers $\bar n$, $\chi\left(\Sh^{sym}_\omega(A)\right)=\chi\left( Sh_{\bar n,u}^{sym}(A)\right)=2$.
\end{lemma}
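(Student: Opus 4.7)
The plan is to adapt the method of Lemma \ref{L:Z-action}. In both graphs the lower bound $\chi\geq 2$ is immediate because $A$ is infinite, so the content is in constructing a $2$-coloring. The strategy in each case is to build a $\mathbb{Z}$-valued signed offset $\Delta(f,g)$, defined on pairs of vertices in the same connected component, with the property that $\Delta(f,g)=\pm 1$ whenever $f\E g$. Choosing a representative $f_C$ in each connected component $C$, the function $f\mapsto \Delta(f_C,f)\bmod 2$ is then a legal $2$-coloring, since traversing an edge flips the parity of $\Delta(f_C,\cdot)$.

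For $\Sh^{sym}_\omega(A)$ I would define $\Delta(f,g):=M-L$ whenever $M,L\geq 0$ are integers with $f(n+M)=g(n+L)$ for every $n\geq 0$. Two choices of witnesses can be shifted into alignment and then cancelled by injectivity of $g$, so $M-L$ depends only on $(f,g)$. Each edge supplies a witness with $\Delta=\pm 1$ (corresponding to the two cases in the edge definition), and additivity along paths, $\Delta(f,h)=\Delta(f,g)+\Delta(g,h)$, then extends the definition to all pairs in a given component.

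For $\Sh^{sym}_{\bar n,u}(A)$ the finite domains block such a clean intrinsic definition, so instead I would set $\Delta(f,g):=\sum_j\epsilon_j$ along any walk $f=f^{(0)}\E\cdots\E f^{(N)}=g$, where $\epsilon_j\in\{\pm 1\}$ records the direction of the $j$-th edge. The heart of the argument is \emph{path-independence}: for every closed walk, $\sum\epsilon_j=0$. My plan is to track a single element around the walk. Since $\bar n$ is strictly increasing, pick an index $i$ with $n_i\geq 2N$ and an element $a\in\Img(f^{(0)}_i)$ sitting at a position $p_0\in[N,n_i-N]$ of the $i$-th block. Each edge of the walk shifts the position of $a$ in the $i$-th block by exactly the edge direction $\epsilon_j$, and the bound $n_i\geq 2N$ ensures the position stays inside $[0,n_i]$ throughout the walk, so $a$ never leaves the image. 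Because $f^{(N)}=f^{(0)}$, the net position shift is zero, forcing $\sum\epsilon_j=0$.

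The principal obstacle is exactly this path-independence in the glued case; the crucial input is the unboundedness of $\bar n$, giving a ``safe'' coordinate in which to track an element without it falling out of the block. Everything else is routine bookkeeping.
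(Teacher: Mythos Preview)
Your argument is correct. Both your proof and the paper's rest on the same underlying idea—extracting a well-defined $\mathbb{Z}$-valued shift between vertices in a component and coloring by its parity—but the packaging differs, particularly in the glued case.

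The paper handles both graphs uniformly by enlarging to an auxiliary space $X$ (eventually-injective sequences, respectively functions injective on the middle of each block), defining a $\mathbb{Z}$-action by translation with padding by a fixed element, and quotienting by an ``eventual agreement'' relation $R$ so that the action on $X/R$ is free; Lemma~\ref{L:Z-action} then applies via the homomorphism $f\mapsto[f]_R$. Your approach is more hands-on: for $\Sh^{sym}_\omega(A)$ you read off $\Delta(f,g)=M-L$ intrinsically from a tail-alignment witness (this is exactly the unique $z$ with $z+[f]_R=[g]_R$ in the paper's language), while for $\Sh^{sym}_{\bar n,u}(A)$ you define $\Delta$ as a path sum and prove path-independence by tracking one element through a block large enough to absorb the whole walk. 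That tracking argument is precisely the content of freeness of the $\mathbb{Z}$-action in the paper's framework, isolated and made explicit. Your version avoids the auxiliary space and quotient and is arguably more elementary; the paper's version has the virtue of reducing both cases to a single abstract lemma. Both routes use the unboundedness of $\bar n$ in the same essential way.
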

\begin{proof}
The proof for these two graphs are the same, albeit the definitions are slightly different. We prove for $\Sh_\omega^{sym}(A)$ and present the appropriate definitions for $\Sh_{\bar n,u}^{sym}(A)$ at the end.

Let $X\subseteq A^\omega$ be the set of all functions $f$ which are eventually injective, i.e. there exists an $n$ such that $f\restriction [n,\infty)$ is injective.

Fix some element $e\in A$. The integers $\mathbb{Z}$ acts on $X$ by translation:
if $z\in \mathbb{Z}$ and $f\in X$ then we define 
\[(z+f)(m)=
\begin{cases}
f(m-z) & 0\leq m-z\\
e & \text{otherwise}
\end{cases}\]

We define an equivalence relation $R$ on $X$:
\[f\R g \iff \exists n (f\restriction [n,\infty)=g\restriction [n,\infty)).\]
Note that if $f\R g$ and $z\in \mathbb{Z}$ then $z+f\R z+g$, so the $\mathbb{Z}$-action induces an action on $X/R$. We note that if $z+[f]=[f]$  for $f\in X$ (and $[f]$ being the class of $f$ in $X/R$) then $z=0$ by eventual injectivity of $f$. Or in other words, the $\mathbb{Z}$-action on $X/R$ is free.

Since if $f,g\in \Sh_\omega^{sym}(A)$ are connected by an edge then either $[f]=1+[g]$ or $[g]=1+[f]$, by Lemma \ref{L:Z-action} and Lemma \ref{L:basic-prop-chi}(3), $\chi\left( \Sh_\omega^{sym}(A)\right)=2$.
%
%

For $\Sh_{\bar n,u}^{sym}(A)$ we define:

Let $X$ be the set of all functions $f:\coprod_{i<\omega}\to [0,n_i]$ satisfying the property that there exists an $n$ such that for all $i<\omega$, $f_i\restriction [n,n_i-n]$ is injective.

For every $z\in \mathbb{Z}$ and $f\in X$ we define for $i<\omega$
\[(z+f)_i(m)=
\begin{cases}
f_i(m-z) & 0\leq m-z\leq n_i \\
e & \text{otherwise}
\end{cases}
\]

We define an equivalence relation $\R$ on $X$:
\[f\R g \iff \exists n \forall i<\omega (f_i\restriction [n,n_i-n]=g_i\restriction [n,n_i-n]).\]
\end{proof}
%
On the other hand if the glued shift graphs are bounded the picture is different.

\begin{example}[A Sequence of Bounded Shift Graphs]
Let $n$ be a natural number, $I$ a set and $\bar n=\langle n_i :i\in I\rangle$ a sequence of natural numbers satisfying $0<n_i\leq n$ for all $i\in I$. We define $\Sh_{\bar n,b}(A)$ for an infinite linearly ordered set $(A,<)$. The vertices are sequences of functions $f=(f_i)_{i\in I}$, where each $f_i:[0,n_i]\to A$ is order preserving. We will say that there is an edge between two vertices $f$ and $g$ if $f_i(m)=g_i(m+1)$ for every $0\leq m<n_i$, $i\in I$ (or vice-versa). 

\begin{lemma}\label{L:embed-symmetric-shift}
Let $(A,<)$ be an infinite linearly ordered set, $\bar n=\langle n_i:i\in I\rangle$ a uniformly bounded sequence of natural numbers with $n_i\geq 1$ and let $n=\max_{i \in I}\{n_i\}$. Then there exists an injective homomorphism  $\Sh_{n+1}(A)\to \Sh_{\bar n,b}(A)$.
\end{lemma}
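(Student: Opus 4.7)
The plan is to define an explicit map and verify directly that it is an injective graph homomorphism. Identify an increasing $(n+1)$-tuple $s \in \Sh_{n+1}(A)$ with a strictly increasing function $s\colon [0,n]\to A$, and set
\[
\varphi(s) \;=\; (s\!\restriction\![0,n_i])_{i\in I}.
\]
Each component $s\!\restriction\![0,n_i]$ is well-defined since $n_i\leq n$, and is order preserving because $s$ is. Hence $\varphi(s)$ is a legitimate vertex of $\Sh_{\bar n,b}(A)$.

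For injectivity: if $s\neq t$ in $\Sh_{n+1}(A)$, then $s_j\neq t_j$ for some $0\leq j\leq n$. Picking any $i\in I$ that attains the maximum $n_i=n$ (such an $i$ exists by definition of $n$), the $j$-th coordinate lies in $[0,n_i]$, so $s\!\restriction\![0,n_i]\neq t\!\restriction\![0,n_i]$, and therefore $\varphi(s)\neq\varphi(t)$.

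For the homomorphism property: suppose $s,t\in \Sh_{n+1}(A)$ are adjacent; without loss of generality $s(i)=t(i-1)$ for every $1\leq i\leq n$, equivalently $t_i=s_{i+1}$ for $0\leq i\leq n-1$. Fix $i\in I$ and let $f_i=s\!\restriction\![0,n_i]$, $g_i=t\!\restriction\![0,n_i]$. Then for every $0\leq m<n_i$ we have $m+1\leq n_i\leq n$, so
\[
g_i(m) \;=\; t_m \;=\; s_{m+1} \;=\; f_i(m+1),
\]
which is exactly the ``vice versa'' clause in the definition of adjacency in $\Sh_{\bar n,b}(A)$. Hence $\varphi(s)$ and $\varphi(t)$ are adjacent.

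There is no real obstacle here: once one checks that the definition of adjacency in $\Sh_{\bar n,b}(A)$ is a symmetric ``shift by one'' condition, the canonical restriction map does the job. The only bookkeeping to be careful about is the direction of the shift (picking the ``or vice versa'' alternative, as above) and confirming injectivity via the index $i$ realising $n_i=n$.
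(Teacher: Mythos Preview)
Your proof is correct and is essentially identical to the paper's own argument: the paper defines $(f_u)_i(h)=u(h)$ for $0\le h\le n_i$, which is exactly your restriction map $\varphi(s)=(s\restriction[0,n_i])_{i\in I}$, and verifies injectivity via an index realizing the maximum and the homomorphism property by the same one-line shift computation (with the opposite choice of ``without loss of generality'').
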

\begin{proof}
For any tuple $u\in (A^{\U{n+1}})_{<}$ we define a vertex $f_u\in \Sh_{\bar n,b}(A)$. For every $0\leq h \leq n_i$, $i \in I$, we set $(f_u)_i(h)=u(h)$. Set $f=(f_i)_{i\in I}$. Note that if $0\leq h<h^\prime\leq n_i$ then $u(h)<u(h^\prime)$ so $(f_u)_i(h)< (f_u)_i(h^\prime)$. By the choice of $n$, $u\mapsto f_u$ is injective as well. 
%

We show that $u\mapsto f_u$ is a homomorphism. Assume that, without loss of generality, $u(h)=v(h+1)$ for every $0\leq h<n$. For every $i\in I$ and for every $0\leq h<n_i$ 
\[(f_u)_i(h)=u(h)=v(h+1)=(f_v)_i(h+1).\]
As needed. 
\end{proof}
\end{example}

The following propositions will be the backbone behind the main results.

\begin{proposition}\label{P:finding-a-shift-graph-no-order}
Let $A$ be an infinite set, $\lambda$ a cardinal with $2^\lambda\leq |A|$  and $G=(A^{\U \lambda},E)$ a graph on $A^{\U \lambda}$. If $\kap$ is an infinite regular cardinal satisfying
\begin{enumerate}
\item[(1)] $\beth_2(\lambda)<\kap$,
\item[(2)] $\chi(G)\geq\kap$ and
\item[(3)] for all $\bar a,\bar b, \bar c,\bar d\in A^{\U \lambda}$ if $\bar a\mathrel{E} \bar b$ and $f_{\bar a,\bar b}=f_{\bar c,\bar d}$ then $\bar c \mathrel{E} \bar d$
\end{enumerate}
then there exists an $n\in \mathbb{N}$ and an injective homomorphsim from $\Sh_n(\omega)$ to $G$.

\end{proposition}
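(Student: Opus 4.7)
My plan is to use condition~(3) to decompose $E$ into pattern-indexed subgraphs $E_f$, pigeonhole to find a single $f$ with large chromatic number, normalise $f$ into the form required by Lemma~\ref{L:embedding-intertwined}, and finally apply that lemma together with Proposition~\ref{P:homomorphism-is-enough}.

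\textbf{Step 1 (pigeonhole on the pattern).} Condition~(3) immediately yields a decomposition $E=\bigcup_{f\in\mathcal{F}} E_f$, where $\mathcal{F}$ is a family of non-identity partial injective functions $f\subseteq\lambda\times\lambda$, of size $|\mathcal{F}|\leq 2^\lambda$. I would then argue that if every $\chi(A^{\U\lambda},E_f)\leq\beth_2(\lambda)$, Lemma~\ref{L:basic-prop-chi}(2) would force
\[
\chi(G)\leq\prod_{f\in\mathcal{F}}\chi(E_f)\leq\beth_2(\lambda)^{2^\lambda}=2^{2^\lambda}=\beth_2(\lambda)<\kap,
\]
contradicting~(2). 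I therefore fix some $f\in\mathcal{F}$ with $\chi(A^{\U\lambda},E_f)>\beth_2(\lambda)$, in particular infinite.

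\textbf{Step 2 (normalising $f$).} Next I impose an arbitrary linear order on $A$ (possible since $A$ is infinite). For each edge $(\bar a,\bar b)\in E_f$, the set $\Rg(\bar a)\cup\Rg(\bar b)$ inherits a linear order, and the resulting ``labelled order type'' (a linear order on $\dom(f)\sqcup\Rg(f)$ modulo the identifications given by $f$) ranges over a family of size at most $2^\lambda$. A second application of Lemma~\ref{L:basic-prop-chi}(2) then yields one such order type $\tau$ for which the corresponding sub-edge-relation retains chromatic number exceeding $\aleph_0$. Restricting to this piece, I may take all relevant tuples to be $<$-increasing on $J:=\dom(f)\cup\Rg(f)$, so that $f$ becomes order preserving on $J$. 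Finally, using the symmetry $E_f=E_{f^{-1}}$ on each orbit component of $f$ independently, I would reorient so that every $i\in\dom(f)$ satisfies $i<f(i)$ in $J$; thus $f$ now also has increasing orbits, matching the hypotheses of Lemma~\ref{L:embedding-intertwined}.

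\textbf{Step 3 (embedding and conclusion).} If $J$ is infinite, a preliminary reduction via de~Bruijn--Erd\H{o}s to finite subgraphs of $E_f$ of arbitrarily large finite chromatic number lets me restrict to a finite subset of $J$ that preserves the relevant pattern. Lemma~\ref{L:embedding-intertwined} then supplies, for all sufficiently large $k$, a homomorphism $\varphi:\LSh_k(\omega)\to((A^{\U J})_<,D_f)$, using $|A|\geq 2^\lambda$ to embed the requisite dense order into $A$. I then extend each $\varphi(u)$ from $J$ to $\lambda$ by filling the remaining coordinates with fresh, pairwise distinct elements of $A$, obtaining a graph homomorphism $\Sh_k(\omega)\to(A^{\U\lambda},E_f)\subseteq G$. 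Proposition~\ref{P:homomorphism-is-enough} (its ``consequently'' part) then delivers an injective homomorphism $\Sh_n(\omega)\to G$ for some $n\leq k$.

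The main obstacle I expect is in Step~2: arranging simultaneously that $f$ be order preserving and have increasing orbits while retaining infinite chromatic number across the two successive pigeonhole reductions (by $f$ and by the labelled order type) requires careful bookkeeping, since the orbit reorientation changes which representatives of $E_f$ and $E_{f^{-1}}$ one considers. The finite-$J$ reduction in Step~3 also demands care when $\lambda$ and $\dom(f)\cup\Rg(f)$ are infinite.
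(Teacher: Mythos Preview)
Your Step~1 is fine, and your Step~2 idea of pigeonholing on the order type induced on $J=\dom(f)\cup\Rg(f)$ would indeed make $f$ order preserving (hence acyclic) on the surviving piece. But Step~3 contains a genuine gap: the de~Bruijn--Erd\H{o}s argument does not let you replace $J$ by a finite subset. De~Bruijn--Erd\H{o}s produces a finite \emph{vertex} subgraph $H$ of $(A^{\U\lambda},E_f)$ with large chromatic number, but every edge of $H$ still uses the \emph{entire} pattern $f$ on all of $J$: the relation $E_f$ demands $f_{\bar a,\bar b}=f$ exactly, not merely its restriction to some finite $J'$. You also cannot project to $(A^{\U{J'}},E_{f\cap(J'\times J')})$ and then lift back, since your ``fresh coordinates'' extension would give $f_{\widetilde\psi_u,\widetilde\psi_v}=f\cap(J'\times J')\subsetneq f$, hence no edge in $E_f$. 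So when $J$ is infinite, Lemma~\ref{L:embedding-intertwined} (which needs a finite index set) is simply unavailable, and nothing in your outline bridges this. A secondary gap: Lemma~\ref{L:embedding-intertwined} requires strict $i<f(i)$, so fixed points of $f$ must be handled separately, which you do not address.

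The paper's proof takes a different route and never introduces an order on $A$. After the same Step~1, it analyzes the orbit structure of $f$ directly: it rules out finite cycles of length $>1$ via a homomorphism to $\Cyc_n^{sym}(A)$ (chromatic number $\leq 3$), rules out infinite orbits via a homomorphism to $\Sh_\omega^{sym}(A)$ (chromatic number $2$, Lemma~\ref{L:chr-dnm+gld-incr-shft}), and rules out unbounded finite orbits via a homomorphism to $\Sh_{\bar n,u}^{sym}(A)$ (also chromatic number $2$). This is precisely what replaces your failed de~Bruijn--Erd\H{o}s step: it forces the non-fixed part of $f$ to consist of finitely-bounded finite shifts. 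The paper then handles the pure-fixed-point case by exhibiting a complete subgraph, and in the remaining case constructs an injective homomorphism $\Sh_{\bar n,b}(\omega)\to G$ by hand (using $2^\lambda\leq|A|$ to code), finishing with Lemma~\ref{L:embed-symmetric-shift} rather than Lemma~\ref{L:embedding-intertwined}.
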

\begin{proof}
Let $F=\{f_{\bar a,\bar b}:\bar a \mathrel{E}\bar b\}$ be the collection of all functions arising as $f_{\bar a,\bar b}$ for some $\bar a$ and $\bar b$ sharing an edge. If we set $E_f=\{(\bar a,\bar b ):f=f_{\bar a,\bar b}\vee f=f_{\bar b,\bar a}\}$ 
then, since by assumption $(3)$, $E=\bigcup_{f\in F} E_f$, then by Lemma \ref{L:basic-prop-chi}(2) 
\[\kap\leq \prod_{f\in F}\chi\left( V,E_f\right).\]
Since $|F|\leq 2^\lambda$ we may assume there exists $f\in F$ with $\chi(V,E_f)>2^\lambda\geq \aleph_0$. Replace $G$ by $(V,E_f)$. Note that although now we only have that $\chi(V,E)\geq \aleph_0$, we gained that $\bar a$ and $\bar b$ are connected by an edge if and only if $f_{\bar a,\bar b}=f$ or $f_{\bar b,\bar a}=f$.

For any $\beta\in\dom(f)\subseteq \lambda$, we distinguish between four possibilities:
\begin{enumerate}
\item ``$\beta$ is a fixed point": $f(\beta)=\beta$;
\item ``$\beta$ generates a finite cycle": there exists a natural number $1<n\in \mathbb{N}$ such that $f^n(\beta)=\beta$ and $f^{n-1}(\beta)\neq \beta$;
\item ``$\beta$ generates a finite shift": there exists a natural number $0<n\in \mathbb{N}$ such that $f^n(\beta)\notin \dom (f)$;
\item ``$\beta$ generates an infinite shift": the set $\{f^n(\beta):n<\omega\}$ is infinite.
\end{enumerate}

We first note the following observations, which will allow us to cross out some of the possibilities:

\begin{itemize}
\item \underline{No $\beta\in \dom (f)$ generates a finite cycle.} Assume there exists $\beta\in \dom (f)$ and $1<n<\omega$ such that $f^{n}(\beta)=\beta$ and $f^{n-1}(\beta)\neq \beta$. Define a homomorphism $G\to \Cyc_n^{sym}(A)$ which maps $\bar a$ to $(a_\beta,a_{f(\beta)},\dots a_{f^{n-1}(\beta)})$. It is a homomorphism because if there is an edge between $\bar a$ and $\bar b$ then by definition of $f$, $a_\beta=b_{f(\beta)},\dots,a_{f^{n-1}(\beta)}=b_\beta$. By Lemma \ref{L:sym-cyclic} and Lemma \ref{L:basic-prop-chi}(3), $\chi(G)\leq \chi(\Cyc_n^{sym}(A))\leq 3$, contradiction.

\item \underline{No $\beta\in \dom(f)$ generates an infinite shift.} Assume there exists $\beta<\lambda$ with $\{f^{n}(\beta): n<\omega\}$ infinite. Define a homomorphism $G\to \Sh_{\omega}^{sym}(A)$ which maps $\bar a$ to $n\mapsto a_{f^n(\beta)}$. By definition this is a homomorphism of graphs. By Lemma \ref{L:chr-dnm+gld-incr-shft} and Lemma \ref{L:basic-prop-chi}(3), $\chi(G)\leq \chi(\Sh_{\omega}^{sym}(A))=2$, contradiction.
\end{itemize}

Let $I=\dom(f)\setminus \Rg(f)$. For any $\beta\in I$ let $n_\beta$ be the maximal natural number $n\geq 1$ such that $f^{n-1}(\beta)\in \dom (f)$.  Note that \[\dom (f)=\bigcup_{\beta \in I} \{\beta,\dots, f^{n_\beta-1}(\beta)\}\cup \{\beta<\lambda: f(\beta)=\beta\}\]
and that 
 \[\dom (f)\cup\Rg(f)=\bigcup_{\beta \in I} \{\beta,\dots, f^{n_\beta}(\beta)\}\cup \{\beta<\lambda: f(\beta)=\beta\}.\]

\begin{claim}
There exists a uniform bound on $\{n_\beta:\beta\in I\}$.
\end{claim}
\begin{claimproof}
Otherwise, assume there are $\langle \beta_{i}:i<\omega\rangle$ such that the sequence $\bar n:=\langle n_{\beta_i}: i<\omega\rangle$ is strictly increasing. 
We define a homomorphism from $G$ to $\Sh_{\bar n,u}^{sym}(A)$ similarly as before. Consequently, $\chi(G)\leq 2$ (by using Lemma \ref{L:chr-dnm+gld-incr-shft}, Lemma \ref{L:basic-prop-chi}(3) and the relevant homomorphisms), contradiction.
\end{claimproof}

We are thus left with two cases:

\underline{Case 1:} $I=\emptyset$. Thus $f$ is the identity on $\dom (f)$. If $\dom (f)=\lambda$ then $G$ is an anticlique and can thus can be colored by only one color, contradiction. Hence $\dom (f)\subsetneq \lambda$. Let $\bar a\in G$ be any sequence. Consider the induced subgraph $G_0\subseteq G$ with vertices:
\[\{\bar b\in G: (\forall i\in\dom (f))\,(b_{i}=a_{i})\}.\]   
By the definition of the edge relation $G_0$ is a complete graph of size $|A|$. In particular we may embed the complete graph on $\omega$ as a subgraph.

\underline{Case 2:} $I\neq\emptyset$. Let $n=\max_{\beta\in I}\{n_\beta\}$ and let $\phi:\lambda\times \omega\times (\Sh_{\bar n,b}(\omega)\cup\{0,1\}) \to A$ be an injective function, which exists since $(\aleph_0)^\lambda+\aleph_0+\lambda\leq |A|$.

We define an injective homomorphism from $\Sh_{\bar n,b}(\omega)$ into $G$ where $\bar n=\langle n_\beta :\beta \in I\rangle$. 
For every function $\mu=(\mu_\beta)_{\beta \in I}$, where $\mu_\beta:[0,n_\beta]\to \omega$ is order preserving, we associate an injective function $\psi_\mu:\lambda \to A$ as follows. For every $\beta\in I$ and $h\in [0,n_\beta]$ we define
\[\psi_\mu(f^h(\beta))=\phi(\beta,\mu_\beta(h),0),\]
note that this is well defined. For every $\alpha\not\in \bigcup_{\beta \in I} \{\beta,\dots, f^{n_\beta}(\beta)\}$ such that $f(\alpha)=\alpha$ we define
\[\psi_\mu(\alpha)=\phi(\alpha,0,1)\]
and otherwise we define
\[\psi_\mu(\alpha)=\phi(\alpha,0,\mu).\]
We claim that the map $\mu\mapsto \psi_\mu$ is an injective homomorphism. 

Injectivity: Let $\mu,\nu\in \Sh_{\bar n,b}(\omega)$ with $\psi_\mu=\psi_\nu$. Let $\beta\in I$ and $h\in [0,n_\beta]$. Since $\psi_\mu(f^h(\beta))=\psi_\nu(f^h(\beta))$ and $\phi$ is injective, $\mu_\beta(h)=\nu_\beta(h)$.

Homomorphism: Assume that $\mu$ and $\nu$ are connected by an edge, i.e. without loss of generality for every $\beta\in I$ and $h\in [0,n_\beta)$, $\mu_\beta(h)=\nu_\beta(h+1)$. We need to show that for every $i,j<\lambda$, $\psi_\mu(i)=\psi_\nu(j)$ if and only $f(i)=j$.

Assume that $f(i)=j$. In particular, $i\in \dom(f)$. If $i=\beta=f(\beta)=j$ then $\psi_\mu(\beta)=\phi(\beta,0,1)=\psi_\nu(\beta)$. If $i\in \dom(f)\setminus \Rg(f)$ then $i=f^h(\beta)$ for some $\beta\in I$ and $h\in [0,n_\beta)$. Thus 
\[\psi_\mu(i)=\psi_\mu(f^h(\beta))=\phi(\beta,\mu_\beta(h),0)=\]\[\phi(\beta,\nu_\beta(h+1),0)=\psi_\nu(f^{h+1}(\beta))=\psi_\nu(j).\]

Assume that $\psi_\mu(i)=\psi_\nu(j)=e$. Since $\mu\neq \nu$, by the injectivity of $\phi$ we have only two possibilities: either $e=\phi(\cdots, 1)$ or $e=\phi(\cdots,0)$. If the former happens, necessarily $f(i)=i$, $f(j)=j$ and $i=j$. 

Otherwise, $i=f^h(\beta)$ and $j=f^{h^\prime}(\beta^\prime)$ for some $\beta,\beta^\prime\in I$, $h\in [0,n_\beta]$ and $h^\prime\in [0,n_{\beta}^\prime]$. Also, since
\[\phi(\beta,\mu_\beta(h),0)=\phi(\beta^\prime,\nu_{\beta^\prime}(h^\prime),0),\]
$\beta=\beta^\prime$ and $\mu_\beta(h)=\nu_{\beta}(h^\prime)$. If $h\in [0,n_\beta)$ then $\mu_\beta(h)=\nu_\beta(h+1)$ since $\mu$ and $\nu$ are connected by an edge, so since $\nu_\beta$ is injective $h^\prime=h+1$. So $f(i)=j$. Otherwise, $h=n_\beta$. If $h^\prime>0$ then since $\nu_\beta(h^\prime)=\mu_\beta(h^\prime-1)=\mu_\beta(h)$ we get a contradiction to the injectivity of $\mu_\beta$. Consequently it must be that $h^\prime=0$. This contradicts the fact that $\nu_\beta$ is order preserving and $n_\beta\geq 1$.

Applying Lemma \ref{L:embed-symmetric-shift} we may conclude that there exists an injective homomorphism from $\Sh_{n+1}(\omega)$ into $\Sh_{\bar n,b}(\omega)$, and thus into $G$ as well.
\end{proof}

\begin{proposition}\label{P:finding-a-shift-graph-ordered}
Let $(A,<)$ be an infinite linearly ordered set, $m<\omega$ and $G=((A^{\U m})_<,E)$ a graph on $(A^{\U m})_<$. Assume $\chi(G)\geq\aleph_0$ and that for all $\bar a,\bar b, \bar c,\bar d\in (A^{\U m})_<$ if $\bar a\mathrel{E} \bar b$ and $f_{\bar a,\bar b}=f_{\bar c,\bar d}$ then $\bar c \mathrel{E} \bar d$. Then there exists $n<\omega$ such that $G$ contains all finite subgraphs of $\Sh_n(\omega)$.

\end{proposition}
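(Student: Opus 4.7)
Since $m$ is finite, there are only finitely many partial functions $f\subseteq m\times m$, and condition $(3)$ lets me write $E=\bigsqcup_f E_f$, where $E_f=\{(\bar a,\bar b):f_{\bar a,\bar b}=f\text{ or }f_{\bar b,\bar a}=f\}$. Lemma \ref{L:basic-prop-chi}(2) gives $\chi(G)\leq \prod_f\chi(V,E_f)$; since the product is finite while $\chi(G)\geq\aleph_0$, some factor $\chi(V,E_f)$ must be infinite, and I replace $E$ by this $E_f$. The chosen $f$ is automatically order-preserving because the tuples in $V$ are strictly increasing, and finiteness of $m$ together with order-preservation forces every orbit of $f$ to be a finite monotone chain (possibly a fixed point), with no non-trivial finite cycles.

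Next I dispose of fixed points. If $f$ is the identity on its whole domain then either $\dom(f)=m$, in which case $E_f=\emptyset$ contradicts $\chi\geq\aleph_0$, or $\dom(f)\subsetneq m$, in which case freezing the values on $\dom(f)$ and letting the free coordinates vary over generic distinct values produces an infinite complete subgraph of $G$, giving all finite subgraphs of $\Sh_1(\omega)=K_\omega$. Otherwise, partition $V$ by the values taken at the fixed positions $\mathrm{Fix}(f)=\{i\in\dom(f):f(i)=i\}$; outside a small exceptional set these classes are mutually graph-isomorphic, so by Lemma \ref{L:basic-prop-chi}(1) some class has chromatic number $\geq\aleph_0$, and on that class the edge relation is controlled by $f$ restricted to its non-fixed positions. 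I may therefore assume $f$ has no fixed points, so every orbit is a non-trivial monotone chain in $m$, either increasing ($i<f(i)$ throughout) or decreasing.

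When all orbits of $f$ are increasing, Lemma \ref{L:embedding-intertwined} provides a directed homomorphism $\varphi:\LSh_k(\omega)\to((\mathbb{Q}^{\U m})_<, D_f)$ for any sufficiently large $k$. After passing to an $\aleph_1$-saturated elementary extension $(A^*,<,V^*,E^*)$ of $(A,<,V,E)$ in which $A^*$ contains a copy of $(\mathbb{Q},<)$, I transport $\varphi$ into $V^*$ and forget directions to obtain a homomorphism $\Sh_k(\omega)\to(V^*,E^*)$. The case of only decreasing orbits reduces to this via $f\leftrightarrow f^{-1}$, which defines the same undirected edge set. The mixed case is the main obstacle: no single orientation of a $\LSh_k$-shift can simultaneously match orbits going in opposite directions. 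To handle it, I construct the embedding by hand in $A^*$, partitioning a suitable portion of $A^*$ into disjoint dense tracks, one per orbit, and combining Lemma \ref{L:embedding-intertwined}-style encodings with their duals; the two-orbit prototype is $f=\{(0,1),(3,2)\}$, where an injective homomorphism $\Sh_2(\omega)\to E_f$ is given by $\bar a_{(i,j)}=(\beta(i),\beta(j),\gamma(j),\gamma(i))$ for $\beta$ strictly increasing and $\gamma$ strictly decreasing with disjoint ranges, and a single $\Sh_2$-shift $(i,j)\sim(j,k)$ then triggers the required equalities on both orbits at once.

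In every case one obtains a graph homomorphism $\Sh_k(\omega)\to(V^*,E^*)$, so Proposition \ref{P:homomorphism-is-enough} yields some $n\leq k$ such that $(V^*,E^*)$ contains all finite subgraphs of $\Sh_n(\omega)$. Since containment of a specified finite graph as a subgraph is first-order expressible, elementarity transfers this conclusion back to $(V,E)=G$, completing the proof. The chief difficulty is the mixed-orbit case, where the matched encoding described above is essential and requires enough room in $A^*$ to interleave the tracks attached to differently-oriented orbits.
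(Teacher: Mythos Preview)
Your overall strategy matches the paper's: reduce to a single $E_f$, pass to a saturated extension, invoke Lemma~\ref{L:embedding-intertwined}, and finish with Proposition~\ref{P:homomorphism-is-enough}. Two steps, however, are not actually carried out.

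The fixed-point elimination is shaky. Partitioning $V$ by the values on $\mathrm{Fix}(f)$ does give pairwise disconnected classes, so $\chi(G)=\sup_c\chi(V_c)$, but Lemma~\ref{L:basic-prop-chi}(1) alone does not force some $\chi(V_c)\geq\aleph_0$: nothing prevents the $\chi(V_c)$ from being finite and unbounded. Your appeal to the classes being ``mutually graph-isomorphic'' would repair this, but it is asserted rather than proved, and for an arbitrary $(A,<)$ it need not hold; you would have to pass to the saturated extension \emph{first} and use homogeneity of the order there. Even then, the induced graph on a single $V_c$ is not literally of the form $((A^{\underline{m'}})_<,E_{f'})$, because of the order constraints against the frozen coordinates, so more work is needed before you may ``assume $f$ has no fixed points''.

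The substantive gap is the mixed case. Your prototype $f=\{(0,1),(3,2)\}$ succeeds precisely because the increasing orbit $\{0,1\}$ and the decreasing orbit $\{2,3\}$ sit in disjoint convex intervals of $m$; the phrase ``one dense track per orbit'' does not explain what to do when several increasing orbits are interleaved with one another and must share a track (that interleaving is the whole content of Lemma~\ref{L:embedding-intertwined}), nor why increasing and decreasing orbits can always be convexly separated. The structural fact you are missing, and which the paper uses, is that order-preservation of $f$ lets one partition $m$ into convex $f$-invariant blocks $J_1<\cdots<J_N$, each of a single type: all orbits in $J_i$ increasing, or all decreasing, or $J_i$ meets $\dom(f)\cup\Rg(f)$ in at most one fixed point. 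This decomposition makes fixed-point elimination unnecessary (fixed points live in ``constant'' blocks and are assigned a value independent of $\mu$) and reduces the general mixed case to applying Lemma~\ref{L:embedding-intertwined} separately on each non-constant $J_i$, then gluing the pieces via a lexicographic product $N\times\mathbb{Q}\times(\LSh_k(\omega)\cup\{0\})$ realized inside the saturated extension. Your two-orbit example is exactly the $N=2$ instance of this construction.
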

\begin{proof}
As was done in the proof of Proposition \ref{P:finding-a-shift-graph-no-order}, letting $F=\{f_{\bar a,\bar b}:\bar a \mathrel{E}\bar b\}$, since $|F|<\aleph_0$, we may assume that $E=E_f$ for some $f\in F$ (see Definition \ref{D:E_f,D_f}).

Since the tuples are increasing, $f$ is necessarily an order preserving function ($i<j\in \dom(f) \implies f(i)<f(j)$). Thus, as $m$ is finite, for any $\beta\in\dom(f)\subseteq m$ with $f(\beta)\neq \beta$, ``$\beta$ generates a finite shift" (in the context of Proposition \ref{P:finding-a-shift-graph-no-order}), i.e. there exists a natural number $0<n\in \mathbb{N}$ such that $f^n(\beta)\notin \dom (f)$.

Let $I=\dom(f)\setminus \Rg(f)$. For any $\beta\in I$ let $n_\beta$ be the maximal natural number $n\geq 1$ such that $f^{n-1}(\beta)\in \dom (f)$. Note that \[\dom (f)=\bigcup_{\beta \in I} \{\beta,\dots, f^{n_\beta-1}(\beta)\}\cup \{\beta<m: f(\beta)=\beta\}\]
and that 
 \[\dom (f)\cup\Rg(f)=\bigcup_{\beta \in I} \{\beta,\dots, f^{n_\beta}(\beta)\}\cup \{\beta<m: f(\beta)=\beta\}.\]

As in the proof of Proposition \ref{P:finding-a-shift-graph-no-order}(Case 1), we may assume that $I\neq \emptyset$. Say that $\beta\in I$ is \emph{increasing} if $\beta< f(\beta)$ and \emph{decreasing} otherwise (equivalently, $f(\beta)<\beta$). Also, as $f$ is order preserving and the tuples are increasing, we may find a partition $m=J_1\cup\dots\cup J_N$ satisfying that
\begin{list}{•}{}
\item each of the $J_i$ are convex and $J_1<\dots<J_N$;
\item if $\beta\in J_i\cap \dom(f)$ then $f(\beta)\in J_i$;
\item if $\beta\in I\cap J_i$ is increasing then every $\beta^\prime\in I\cap J_i$  is increasing;
\item if $\beta\in I\cap J_i$ is decreasing then every $\beta^\prime\in I\cap J_i$  is decreasing and
\item if for $\beta\in J_i$, $f(\beta)=\beta$ then for every $\beta^\prime\in (\dom(f)\cup\Rg(f))\cap J_i$, $\beta=\beta^\prime$.
\end{list}

For every $1\leq i\leq N$, set $f_i=f\cap (J_i\times J_i)$. 

For any $1\leq i\leq N$ if $J_i$ is of increasing type, by applying Lemma \ref{L:embedding-intertwined} there is a homomorphism $g_{i,k}:\LSh_{k}(\omega)\to ((\mathbb{Q}^{\U{J_i}})_<,D_{f_i})$ for any large enough $k$.

For any $1\leq i\leq N$ if $J_i$ is of decreasing type, by applying Lemma \ref{L:embedding-intertwined} to $(\mathbb{Q},<^*)$ (there reverse order on $\mathbb{Q}$) and $(J_i,<^*)$ (the reverse order on $J_i$) there is a homomorphism $g_{i,k}^*:\LSh_{k}(\omega)\to ((\mathbb{Q}^{\U{{(J_i,<^*)}}})_{<^*},D_{f_i})$ for any large enough $k$. Since the identity function is an isomorphism of directed graphs 
\[((\mathbb{Q}^{\U{{(J_i,<^*)}}})_{<^*},D_{f_i})\cong ((\mathbb{Q}^{\U{J_i}})_{<},D_{f_i}),\]
we may compose an get a homomorphism $g_{i,k}:\LSh_{k}(\omega)\to ((\mathbb{Q}^{\U{J_i}})_{<},D_{f_i})$.

Let $k$ be large enough so that $g_{i,k}$ are defined for all $i$ and set $g_i=g_{i,k}$.

For any $1\leq i\leq N$, if $J_i$ is of constant type fix some embedding $g_i:(J_i,<)\to (\mathbb{Q},<)$.

Let $(A,<)\prec (\mathcal{A},<)$ be a sufficiently saturated extension with $(\mathcal{A},<)$ containing $(Q,<)=(N\times \mathbb{Q}\times (\LSh_k(\omega)\cup\{0\}),<)$ as a substructure, where we may chose any linear order on $\LSh_k(\omega)\cup\{0\}$.
Since the inclusion $(Q,<)\subseteq (\mathcal{A},<)$ induces an injective homomorphism \[((Q^{\U m})_<, D_f)\to ((\mathcal{A}^{\U m})_<, D_f),\] we may assume that $\mathcal{A}=Q$. We will construct a homomorphism $\LSh_k(\omega)\to ((\mathcal{A}^{\U m})_<,D_f)$.

Let $\mu\in \LSh_k(\omega)$. We define $\psi_\mu\in (\mathcal{A}^{\U m})_<$ as follows. If $\alpha\in J_i$, with $J_i$ increasing or decreasing then
\[\psi_\mu(\alpha)=(i,g_i(\mu)(\alpha),0).\]
If $\alpha\in J_i$, with $J_i$ of constant type, and $f(\alpha)=\alpha$ then
\[\psi_\mu(\alpha)=(i,g_i(\alpha),0).\]
If $\alpha\in J_i$, with $J_i$ of constant type, and $f(\alpha)\neq \alpha$ then
\[\psi_\mu(\alpha)=(i,g_i(\alpha),\mu).\]
Since $J_1<\dots<J_N$ then by definition, $\psi_\mu$ is increasing. We claim that $\mu\mapsto \psi_\mu$ is a homomorphism.

Assume that $\mu,\nu\in \LSh_k(\omega)$ are such that $\mu(h)=\nu(h+1)$ for $0\leq h<k-1$. We will show that $f(\alpha)=\beta$ if and only if $\psi_\mu(\alpha)=\psi_\nu(\beta)$.

If $f(\alpha)=\beta$ then $\alpha,\beta\in J_i$ for some $1\leq i\leq N$. If $J_i$ is not of constant type then since $g_i$ is a homomorphism, $g_i(\mu)(\alpha)=g_i(\nu)(\beta)$, so
\[\psi_\mu(\alpha)=(i,g_i(\mu)(\alpha),0)=(i,g_i(\nu)(\beta),0)=\psi_\nu(\beta).\]
If $J_i$ is of constant type then $\beta=f(\alpha)=\alpha$ and 
\[\psi_\mu(\alpha)=(i,g_i(\alpha),0)=(i,g_i(\beta),0)=\psi_\nu(\beta).\]

Now assume that $\psi_\mu(\alpha)=\psi_\nu(\beta)$. By definition, $\alpha,\beta\in J_i$ for some $1\leq i\leq N$. If $J_i$ is of constant type then since $\mu\neq \nu$ by the definition of $\psi$, $f(\alpha)=\alpha$, $f(\beta)=\beta$ and $g_i(\alpha)=g_i(\beta)$. Consequently, $\alpha=\beta$ and as a result $\alpha=\beta=f(\alpha)$.

If $J_i$ is not of constant type then $g_i(\mu)(\alpha)=g_i(\nu)(\beta)$. By the fact that $g_i$ is a homomorphism, $f(\alpha)=\beta$.

Finally, by Proposition \ref{P:homomorphism-is-enough}, $G$ contains all finite subgraphs of $\Sh_k(\omega)$.

\end{proof}

\section{Superstable and  \texorpdfstring{$\omega\text{-stable Graphs}$}g}
We use the main result of the previous section in order to prove the strong from of Taylor's conjecture for $\omega$-graphs and a suitable variant for superstable graphs.

The following result is somewhat reminiscent (in flavor) of \cite[Theorem III]{ER-comb}. It is a local version of the the well known fact that, in stable theories, every indiscernible sequence is an indiscernible set \cite[Lemma 9.1.1]{TZ} (it is possibly known, but could not find a reference).

Generalizing the notation from Definition \ref{D:f_a,b}, for two tuples, possibly of different length, $\bar a$ and $\bar b$, we denote $f_{\bar a,\bar b}=\{(i,j):a_i=b_j\}$.

Recall that for a set of formulas $\Delta$, a \emph{$\Delta$-indiscernible sequence} is a sequence of elements that are indiscernible only with respect to formulas from $\Delta$. For a formula $\varphi(x_0,\dots, x_{n-1})$ let $\Delta_\varphi:=\{\varphi(x_{\pi(0)},\dots,x_{\pi(n-1)}): \pi \text{ is a function fron $n$ to $n$} \}$.

\begin{proposition}\label{P:stable-edge-relation}
Let $T$ be a complete theory and $\varphi(x,y)$ a partitioned stable formula, with $x$ and $y$ possibly of different lengths. Let $I$ be a $\Delta_\varphi$-indiscernible sequence indexed by an infinite linearly ordered set $(Q,<)$.

If there exist $\bar a\in I^{\U{|x|}}$ and $\bar b\in I^{\U{|y|}}$ disjoint increasing tuples such that $\varphi(\bar a,\bar b)$ holds then for every disjoint increasing tuples $\bar c\in I^{\U{|x|}}$ and $\bar d\in I^{\U{|y|}}$, $\varphi(\bar c,\bar d)$ holds.

Moreover, if $\bar a,\bar c\in I^{\U{|x|}}$ and $\bar b,\bar d\in I^{\U{|y|}}$ are increasing tuples then
\begin{itemize}
\item[($\star$)] $\varphi(\bar a,\bar b)\wedge f_{\bar a,\bar b}=f_{\bar c,\bar d}\implies \varphi(\bar c,\bar d).$
\end{itemize}
\end{proposition}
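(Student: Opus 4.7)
The plan is to use $\Delta_\varphi$-indiscernibility of $I$ to reduce the proposition to a combinatorial claim about ``indexing patterns,'' and then resolve that claim by an adjacent-swap argument powered by stability of $\varphi$.

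Given increasing $\bar a\in I^{\U{|x|}}$ and $\bar b\in I^{\U{|y|}}$, let $\bar e$ be the increasing enumeration of $\Rg(\bar a)\cup\Rg(\bar b)$, of length $n'=|x|+|y|-|f_{\bar a,\bar b}|$; then $\bar a,\bar b$ are encoded by the increasing functions $\sigma:|x|\to n'$ and $\tau:|y|\to n'$ with $a_i=e_{\sigma(i)}$, $b_j=e_{\tau(j)}$, jointly surjective onto $n'$ and satisfying $\sigma(i)=\tau(j)\iff(i,j)\in f_{\bar a,\bar b}$. Combining $\sigma,\tau$ into a single function on $|x|+|y|$ and padding to arity $|x|+|y|$ shows that $\varphi(\bar a;\bar b)$ is the value at $\bar e$ (viewed as a prefix of a longer increasing tuple in $I$) of a formula in $\Delta_\varphi$, so $\Delta_\varphi$-indiscernibility of $I$ implies that the truth of $\varphi(\bar a;\bar b)$ depends only on $(\sigma,\tau)$ (and $n'$), not on the specific choice of $\bar e$. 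Call two pairs $(\sigma,\tau)$, $(\sigma',\tau')$ \emph{$f$-consistent} if both are increasing, jointly onto $n'$, with overlap pattern $f$. The moreover statement reduces to: any two $f$-consistent indexings yield the same truth value of $\varphi$.

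Since $\bar a,\bar b$ are increasing, $f$ is order-preserving, and a direct count shows that the ``shared'' positions --- those $k$ with $\sigma(i)=\tau(j)=k$ for some $(i,j)\in f$ --- are determined by $f$ alone: for $(i,j)\in f$ the corresponding $k$ equals $i+j-|\{(i',j')\in f:i'<i\}|$. Hence all $f$-consistent indexings share the same shared positions and differ only in how the non-shared ``$\bar a$-only'' and ``$\bar b$-only'' slots are interleaved in $\{0,\ldots,n'-1\}$; any two such interleavings are connected by a chain of elementary swaps, each exchanging an $\bar a$-only slot with an adjacent $\bar b$-only slot (and automatically preserving $f$). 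Suppose toward a contradiction that some elementary swap at positions $k,k+1$ of $\bar e$ flips the value of $\varphi$. Freeze all entries of $\bar e$ except $e_k,e_{k+1}$ as parameters and substitute them into $\varphi$, leaving the $\bar a$-coordinate at position $k$ as a free variable $u$ and the $\bar b$-coordinate at position $k+1$ as a free variable $v$; this gives a two-variable formula $\psi(u;v)$ which is stable in the $(u;v)$ partition, because stability is preserved by parameter substitution (and variable renaming).

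The swap in question is precisely the transformation $\psi(u;v)\leadsto\psi(v;u)$: after the swap, the $\bar a$-coordinate takes the value $e_{k+1}$ and the $\bar b$-coordinate takes $e_k$. By $\Delta_\varphi$-indiscernibility (varying $e_k,e_{k+1}$) the assumed flip persists for every $u<v$ from $I$ in the gap between the frozen neighbours; so, WLOG, $\psi(u;v)$ is true and $\psi(v;u)$ false for all such pairs. Picking an infinite increasing sequence $g_0<g_1<\cdots$ in $I$ inside that gap --- possible since $I$ is infinite and we may place the frozen parameters far apart --- we obtain $\psi(g_a;g_b)\iff a<b$, the order property for $\psi$, contradicting its stability. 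This settles the moreover statement, and the first assertion of the proposition is its $f=\emptyset$ special case. The main obstacle is the combinatorial bookkeeping in Step~2: verifying the rigidity of the shared positions so that elementary swaps genuinely preserve $f$, and checking the connectedness of the swap graph on $f$-consistent indexings; once these are in hand, Step~3 is a textbook order-property extraction.
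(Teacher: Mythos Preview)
Your argument is correct and takes a genuinely different route from the paper's. The paper first strengthens the hypothesis by passing to a fully indiscernible sequence over a dense linear order (via EM-types), then proves the disjoint case using definability of $\varphi$-types together with elimination of imaginaries in DLO to pin down the set $X=\{\bar c:\varphi(\bar a,\bar c)\}$ as $\emptyset$-definable in $(I,<)$, and finally handles the general $f$ by an induction on $|\dom(f)|$ that peels off one shared coordinate at a time. Your approach instead stays with the original $\Delta_\varphi$-indiscernibility, reduces everything to a combinatorial statement about indexing patterns, and resolves it uniformly by adjacent-swap moves and a direct order-property extraction. What you gain is elementarity --- you avoid definability of types, EI for DLO, and the separate induction --- and a proof that treats disjoint and overlapping tuples on exactly the same footing. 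What the paper's approach buys is that the heavy lifting is delegated to standard stability and DLO facts, so there is less combinatorial bookkeeping to verify.

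One small point to tighten: the sentence ``picking an infinite increasing sequence \ldots\ inside that gap'' is not literally available when $I$ has, say, order type $\omega$ and the gap is bounded above. What you actually need is either to first pass (as the paper does) to a dense index set, or simply to observe that since $I$ is infinite you can choose $\bar e$ so that the gap contains at least $N$ elements for any finite $N$; stability of $\varphi$ gives a fixed finite bound on the length of any order, so arbitrarily long finite orders already yield the contradiction. With that adjustment your extraction of the order property for $\psi$ goes through cleanly, and the rigidity-of-shared-positions computation you give (that the position of a shared coordinate $(i,j)\in f$ is $i+j-|\{(i',j')\in f:i'<i\}|$) does exactly what is needed to confine the free interleaving to the blocks between consecutive shared positions, where connectivity by adjacent swaps is immediate.
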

\begin{proof}
Let $I^\prime$ be an indiscernible sequence with the same EM-type as $I$. Suppose ($\star$) is not true as witnessed by $\bar a,\bar b,\bar c,\bar d$, then let $\bar a^\prime, \bar b^\prime, \bar c^\prime, \bar d^\prime$ in $I^\prime$ be such that $\bar a^\prime\bar b^\prime$ as the same order type as $\bar a\bar b$, and $\bar c^\prime\bar d^\prime$ has the same order type as $\bar c\bar d$. It follows that ($\star$) is not true for $\bar a^\prime,\bar b^\prime,\bar c^\prime, \bar d^\prime$. We may thus assume that $I$ is an indiscernible sequence. Similarly, we may assume that $(Q,<)$ is a dense linear order with no endpoints. Also, we endow $I$ with the order induced by $Q$, i.e. we write $a_i<a_j$ but mean $i<j\in Q$.

Let $\bar a,\bar b,\bar c$ and $\bar d$ be as in the statement. By applying an automorphism we may assume that $\bar c=\bar a$. Let $X=\{\bar c\in I\setminus \bar a: \varphi(\bar a,\bar c)\}$. Note $\bar b\in X$.

By stability the $\varphi$-type $\tp_\varphi (\bar a/I\setminus \bar a)$ is definable, i.e. there is some formula $\psi(y,\bar e)$ with $\bar e\in I\setminus \bar a$ such that 
\[ \bar c\in X \iff \bar c\models\psi(y,\bar e).\]

Let $\bar h\in I\setminus \bar a\bar e$ have the same order type as $\bar e$ over $\bar a$, which exists by density. Let $\sigma$ be an automorphism of $(I,<)$ which fixes $\bar a$ and maps $\bar e$ to $\bar h$. By indiscernibility it follows that $\sigma(X)=X$.

We claim that $X$ is definable over both $\bar e$ and $\bar h$ in the structure $(I\setminus \bar a,<)$. Indeed if $\bar c_1 ,\bar c_2$ have the same order type over $\bar e$ then $\bar c_1\in X\iff \bar c_2\in X$. Since there only finitely many order types over $\bar e$, this shows the claim for $\bar e$. As $\sigma(X)=X$, we have it also for $\bar h$.


%
As DLO eliminates imaginaries \cite[Exercise 8.4.3]{TZ}, $X$ has a code $\ulcorner X\urcorner\in \dcl(\bar e)\cap \dcl(\bar h)$. As $\dcl$ is trivial in DLO and $\bar e$ and $\bar h$ are disjoint, $X$ is definable over $\emptyset$. Consequently, either $X$ is empty or equals to $I\setminus \bar a$. Since $X$ is not empty, $X=I\setminus \bar a$. This proves the first part.

Now for the moreover part. Assume that $\varphi(\bar a,\bar b)$ holds and $f_{\bar a,\bar b}=f_{\bar c,\bar d}$. By applying an automorphism, we may assume that $\bar c=\bar a$. We prove it by induction on $|\dom (f)|$, where $f=f_{\bar a,\bar b}=f_{\bar a,\bar d}$, the case $|\dom(f)|=0$ being the first part of the proposition. 

Assume this is true for functions whose domains have cardinality less than $k$ and that $|\dom(f)|=k$. Let $i$ be the maximal element of $\dom(f)$.

Note that $I_{<a_{i}}$ is indiscernible over $I_{\geq a_{i}}$. Consider the formula $\psi(u,v)=\varphi(ua_{\geq i},vb_{\geq f(i)})$ (recall $b_{f(i)}=a_{i}$). Applying the induction hypothesis to $\psi(u,v)$ we conclude that $\varphi(\bar a,d_{<f(i)}b_{\geq f(i)})$ and so also $\varphi(\bar a,d_{\leq f(i)}b_{> f(i)})$ (because $d_{f(i)}=a_i=b_{f(i)}$). Now note that $I_{>a_i}$ is also indiscernible over $I_{\leq a_{i}}$ and we consider the formula $\psi(u,v)=\varphi(a_{\leq i}u,d_{\leq f(i)}v)$ (recall that $d_{f(i)}=a_{i}$). Applying the base of the induction hypothesis, we conclude that $\varphi(\bar a,\bar d)$, as required. 

%
%
%
%

\end{proof}
%
%
%

\begin{definition}
Let $\mathcal{L}$ be a first order language and $T$ a complete $\mathcal{L}$-theory with infinite models and let $\Delta$ be a set of formulas. An \emph{EM$_\Delta$-Model} of $T$ is a model which is generated by a $\Delta$-indiscernible sequence, i.e. a model $M\models T$ with a $\Delta$-indiscernible sequence $I$ such that for every $b\in M$ there exist a term $t(x_0,\dots,x_{n-1})$ and elements $a_0<\dots<a_{n-1}\in I$ with $b=t(a_0,\dots,a_{n-1})$. If $\Delta$ is the set of all formulas we omit $\Delta$ from the notation.
\end{definition}

For a binary relation $E$, let $\Delta (E)$ be the collection of formulas of the form $E(t(x),t(y))$, where $t$ is a term.

\begin{theorem}\label{T:EM-model}
Let $\mathcal{L}=\{E,\dots\}$ be a first order language with $E$ a binary relation. Let $T$ an $\mathcal{L}$-theory specifying that $E$ is a symmetric and irreflexive stable relation. Let $\kap>|T|+\aleph_0$ be an infinite regular cardinal. Let $G=(V;E,\dots)\models T$ be an EM$_{\Delta (E)}$-model. If $\chi(V,E)\geq \kap$ then there exists a natural number $n$ such that $G$ contains all finite subgraphs of $\Sh_n(\omega)$. 
\end{theorem}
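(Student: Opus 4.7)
The plan is to decompose $V$ according to the terms generating elements of the EM-model, reduce to a single term $t$ that carries the chromatic number, pull the edge relation back to increasing tuples from the indiscernible sequence, and then invoke Proposition~\ref{P:finding-a-shift-graph-ordered} after verifying its combinatorial hypothesis via the stability result of Proposition~\ref{P:stable-edge-relation}.

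First, fix a $\Delta(E)$-indiscernible sequence $I$ generating $G$, and write $V = \bigcup_t V_t$, where $t$ ranges over $\mathcal{L}$-terms and $V_t = \{t(\bar a) : \bar a \in (I^{\U{n_t}})_<\}$, with $n_t$ the arity of $t$. Since there are at most $|T|+\aleph_0 < \kap$ such terms and $\kap$ is regular, Lemma~\ref{L:basic-prop-chi}(1) forces some $V_t$ to satisfy $\chi(V_t, E\restriction V_t) \geq \kap$; in particular $|I| \geq \kap$, so $I$ is infinite. Pull $E$ back to $(I^{\U{n_t}})_<$ by declaring $\bar a \mathrel{E'} \bar b \iff E(t(\bar a), t(\bar b))$; this is symmetric and irreflexive because $E$ is. The map $t$ is then a surjective graph homomorphism from $((I^{\U{n_t}})_<, E')$ onto $(V_t, E \restriction V_t)$ fulfilling the biconditional in Lemma~\ref{L:basic-prop-chi}(4), so $\chi((I^{\U{n_t}})_<, E') \geq \kap \geq \aleph_0$.

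Next, the formula $\varphi(x,y) := E(t(x), t(y))$ is stable (any witness to the order property of $\varphi$ transports, under substitution, to one for $E$), and since $\Delta_\varphi \subseteq \Delta(E)$ the sequence $I$ is $\Delta_\varphi$-indiscernible. The ``moreover'' clause of Proposition~\ref{P:stable-edge-relation} then yields precisely the hypothesis of Proposition~\ref{P:finding-a-shift-graph-ordered}: for all increasing tuples $\bar a, \bar b, \bar c, \bar d \in (I^{\U{n_t}})_<$, if $\bar a \mathrel{E'} \bar b$ and $f_{\bar a,\bar b} = f_{\bar c,\bar d}$, then $\bar c \mathrel{E'} \bar d$. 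Applying Proposition~\ref{P:finding-a-shift-graph-ordered} with $A = I$ (ordered by the index set of the sequence) and $m = n_t$ produces some $n \in \mathbb{N}$ such that $((I^{\U{n_t}})_<, E')$ contains all finite subgraphs of $\Sh_n(\omega)$. Since $t$ is a graph homomorphism from this graph into $(V, E)$, a direct appeal to Proposition~\ref{P:homomorphism-is-enough} transfers the conclusion to $G$, which therefore contains all finite subgraphs of $\Sh_{n'}(\omega)$ for some $n' \leq n$. The only delicate verifications are the stability of $\varphi$ and the $\Delta_\varphi$-indiscernibility of $I$; both are immediate from the hypotheses.
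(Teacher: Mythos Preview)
Your proof is correct and follows essentially the same route as the paper's: reduce to a single term via Lemma~\ref{L:basic-prop-chi}(1) and regularity of $\kap$, pull back the edge relation along $t$ and use Lemma~\ref{L:basic-prop-chi}(4), invoke Proposition~\ref{P:stable-edge-relation} to verify the hypothesis of Proposition~\ref{P:finding-a-shift-graph-ordered}, and finish with the ``consequently'' clause of Proposition~\ref{P:homomorphism-is-enough}. You supply slightly more detail than the paper (e.g.\ the explicit observation that $\Delta_\varphi\subseteq\Delta(E)$ and that $|I|\geq\kap$), but the argument is the same.
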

\begin{proof}
Let $(A,<)$ be a linearly ordered set, $I=\langle r_i:i\in A\rangle$ an indiscernible sequence and $\{t_\alpha\}_{\alpha <|T|}$ a set of terms satisfying that $V=\bigcup_{\alpha<|T|} t_{\alpha}(I)$, where $t_{\alpha}(I)$ is the image of the map $I\mapsto V$ given by substituting increasing tuples in $t_\alpha$.


By Lemma \ref{L:basic-prop-chi}(1), $\kap\leq \chi(G)\leq \sum_{\alpha<|T|} \chi(t_\alpha(I),E\restriction t_\alpha(I))$. Since $\kap$ is a regular cardinal, there exists an $\alpha$ such that $\chi(t_\alpha(I),E\restriction t_\alpha(I))\geq \kap$. We may thus assume that $V=t(I)$ for some term $t=t(\bar x)=t(x_0,\dots,x_{n-1})$.

The map $t:(I^{\U n})_<\to t(I)$ induces a graph on $(I^{\U n})_<$  by specifying that $\bar a \E\bar b$ if and only if $t(\bar a)\E t(\bar b)$. By Lemma \ref{L:basic-prop-chi}(4), $\chi((I^{\U n})_<)\geq \kap$ as well.
%
%

Since the edge relation $E(v,u)$ is stable, so is $E(t(\bar x),t(\bar y))$. As a result, the moreover part of Proposition \ref{P:stable-edge-relation} allows us to apply Proposition \ref{P:finding-a-shift-graph-ordered}. Hence $(I^{\U n})_<$ contains all finite subgraphs of $\Sh_k(\omega)$ for some $k$. We may now conclude by applying the consequently part of Proposition \ref{P:homomorphism-is-enough}.
\end{proof}

\begin{corollary}\label{C:superstable,omegastable}
Let $G=(V,E)$ be a graph.  If 
\begin{itemize}
\item $G$ is superstable and  $\chi(G)> 2^{\aleph_0}$ or
\item $G$ is $\omega$-stable and $\chi(G)>\aleph_0$ 
\end{itemize}
then $G$ contains all finite subgraphs of $\Sh_n(\omega)$ for some $n\in \mathbb{N}$.
\end{corollary}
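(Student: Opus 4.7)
The plan is to reduce the corollary to an application of Theorem \ref{T:EM-model}. Since ``$G$ contains all finite subgraphs of $\Sh_n(\omega)$'' is a countable conjunction of first-order existential sentences (one for each finite subgraph), this property is preserved under elementary equivalence. It therefore suffices to exhibit some $G'\equiv G$ (in a suitable expansion of $\{E\}$) that is an EM-model of $T=\mathrm{Th}(G)$ and has chromatic number $\geq \aleph_1$. Since the graph language is countable, any countable Skolem expansion still has $|T|+\aleph_0=\aleph_0$, so $\kap=\aleph_1$ is a regular cardinal satisfying the hypothesis of Theorem \ref{T:EM-model}, and applying the theorem to $G'$ produces some $n$ such that $G'$, hence $G$, contains all finite subgraphs of $\Sh_n(\omega)$.

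To produce $G'$, I would first pass to a Skolem expansion $T^{\mathrm{sk}}$ of $T$ in a countable language; in $T^{\mathrm{sk}}$, EM-models are genuine Skolem hulls of indiscernible sequences (rather than just the bare sequences, as would be the case for the reduct-language $\{E\}$), while the edge relation $E$ remains stable since the stability of a fixed formula persists under any language expansion sharing the same $\{E\}$-reduct. Next, I would use (super)stability together with the assumption on $\chi(G)$ to locate a ``monochromatic'' piece of the edge relation: partition $E=\bigcup_{q\in Q}E_q$ by the 2-types $q\in S_2(\emptyset)$ containing $E(x,y)$ and apply Lemma \ref{L:basic-prop-chi}(2) to get $\chi(G)\leq\prod_{q\in Q}\chi(V,E_q)$. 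In the superstable case one has $|Q|\leq 2^{\aleph_0}$, so the assumption $\chi(G)>2^{\aleph_0}$ forces some $\chi(V,E_q)>\aleph_0$. In the $\omega$-stable case, $|Q|\leq\aleph_0$, but the naive product bound $\aleph_0^{\aleph_0}=2^{\aleph_0}$ is too weak, so a finer Morley-rank decomposition of the $1$-type pieces of $V$ (using that $\omega$-stable theories admit prime models and Cantor--Bendixson-type analyses) is needed to pin down a subgraph described by a single $2$-type with $\chi>\aleph_0$. Inside this subgraph, extract (after passing to a saturated extension) an infinite indiscernible sequence $I$ whose Skolem hull $G'=EM(I)\models T^{\mathrm{sk}}$ inherits sufficient edge structure: the moreover part of Proposition \ref{P:stable-edge-relation} then controls exactly where edges appear in $G'$ in terms of the functions $f_{\bar a,\bar b}$, and a standard coloring-decomposition argument using Lemma \ref{L:basic-prop-chi}(1)--(2) and the finite number of terms of each arity guarantees $\chi(G')\geq \aleph_1$.

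Having produced $G'$, apply Theorem \ref{T:EM-model} with $\kap=\aleph_1$ to get $n$ with $G'$ containing all finite subgraphs of $\Sh_n(\omega)$; by elementary equivalence the same conclusion holds for $G$. The principal obstacle is the second step: guaranteeing that the EM-model $G'$ genuinely attains $\chi\geq \aleph_1$, not merely that it contains many edges. The superstable case is accessible through the $|Q|\leq 2^{\aleph_0}$ bound on $2$-types and the hypothesis $\chi(G)>2^{\aleph_0}$, but the $\omega$-stable case requires exploiting the stronger structural features of $\omega$-stable theories (Morley rank, prime models over arbitrary sets, definability of types) to bridge the gap between the chromatic hypothesis $\chi(G)>\aleph_0$ and the looser $2^{\aleph_0}$ barrier produced by the crude product bound.
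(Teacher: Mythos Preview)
Your proposal has a genuine gap at precisely the point you flag as ``the principal obstacle'': you never actually establish that the EM-model $G'$ you build satisfies $\chi(G')\geq\aleph_1$. Extracting an indiscernible sequence from a monochromatic $2$-type piece and taking its Skolem hull gives you a model elementarily equivalent to $G$, but there is no mechanism in your outline that forces this hull to have large chromatic number. Your appeal to Proposition~\ref{P:stable-edge-relation} and Lemma~\ref{L:basic-prop-chi} is misplaced: those tools are used \emph{inside} the proof of Theorem~\ref{T:EM-model} to pass from an EM-model already known to have large chromatic number to a shift-graph embedding; they do not help produce the large chromatic number in the first place. The ``standard coloring-decomposition argument'' you allude to would at best show $\chi(G')\leq$ something, not $\chi(G')\geq\aleph_1$.

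The paper bypasses this difficulty entirely by invoking a nontrivial black box: results of Shelah and Mariou guarantee that a superstable (respectively $\omega$-stable) theory has a \emph{saturated} EM-model $\mathcal{G}$ in an expanded language of size $2^{\aleph_0}$ (respectively $\aleph_0$). Saturation lets one embed the original $G$ into $\mathcal{G}$ as an induced subgraph, so $\chi(\mathcal{G})\geq\chi(G)$ immediately, and then Theorem~\ref{T:EM-model} applies with $\kap=(2^{\aleph_0})^+$ (respectively $\aleph_1$). This also explains the thresholds in the statement: the $2^{\aleph_0}$ in the superstable case is the size of the expanded language needed for the saturated EM-model, not a bound on the number of $2$-types as you suggest. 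Your plan to work in a countable Skolem expansion throughout would, if it worked, prove the superstable case under the weaker hypothesis $\chi(G)>\aleph_0$ --- which is a sign that something is missing.
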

\begin{proof}
Suppose $G$ is superstable and $\chi(G)>2^{\aleph_0}$. By \cite[Claim 16.2(2B.c)]{Sh:1151} or \cite[page 345]{mariou}, \cite[Theorem 3.B]{mariouthesis} there exists $\{E\}\subseteq \mathcal{L}$ of cardinality $2^{\aleph_0}$ and an $\mathcal{L}$-saturated EM-model $\mathcal{G}$ such that $\mathrm{Th}(\mathcal{G})\restriction \{E\}=\mathrm{Th}(G)$. Since $\mathcal{G}$ is saturated, we may embed $G$ as an induced subgraph of $\mathcal{G}$. Since $\chi(\mathcal{G}) > 2^{\aleph_0}$, by Theorem \ref{T:EM-model} all finite subgraphs of $\Sh_n(\omega)$ are contained in $\mathcal{G}$ for some $n\in\mathbb{N}$.  The result now follows since $G\prec \mathcal{G}$, as graphs.

For $\omega$-stable graphs we may use \cite[Theorem C]{mariou} to find an $\mathcal{L}$-saturated EM-model in a countable language.
\end{proof}

\section{Stationary Stable Graphs}
The crucial part of the proof of Theorem \ref{T:EM-model} was the existence of a saturated EM-model.
It is a natural question to ask whether the technique from the previous section can be generalized to any stable graph, i.e. is the following true:
\begin{quote}
There exists a cardinal $\kappa$ such that for every stable graph with $\chi(G)\geq \kappa$ there exists a saturated EM-model $\mathcal{G}$, in an expansion $\{E\}\subseteq \mathcal{L}$ with $|\mathcal{L}|<\kappa$, such that $G\prec \mathcal{G}\restriction \{E\}$.
\end{quote}

However, Mariou has shown in \cite[Theorem 3.A]{mariouthesis} that if a stable theory $T$ has a $\kappa^+$-saturated EM-model in an expansion $\mathcal{L}$ with $|\mathcal{L}|\leq \kappa$ then $T$ is superstable. As a result, such a result would imply superstability. For general stable graphs a different approach is needed.

A connected notion to that of EM-models is the that of representations of structures from \cite{919}. We will need a variation on the theme.

\begin{definition}[The Free Algebra]
Suppose $A$ is a pure set. Let $\mathcal{M}_{\mu,\kappa}(A)$ be the (non first order) structure whose vocabulary is $\mathcal{L}_{\mu,\kappa}=\{F_{\alpha,\beta}: \alpha<\mu, \beta<\kappa\}$, where each $F_{\alpha,\beta}$ is a $\beta$-ary function symbol for all $\alpha<\mu$ (note that we allow infinite arity). The universe of $\mathcal{M}_{\mu,\kappa}(A)$ is $\bigcup_{\gamma\in \mathrm{Ord}} \mathcal{M}_{\mu,\kappa,\gamma}(A).$
Where 
\begin{list}{•}{}
\item $\mathcal{M}_0(A)=A$,
\item for limit $\gamma$, $\mathcal{M}_\beta(A)=\bigcup_{\gamma\prime <\gamma}\mathcal{M}_{\gamma\prime}(A)$,
\item and for successor
\[\mathcal{M}_{\gamma+1}=\mathcal{M}_\gamma\cup\{F_{\alpha,\beta}(\bar b):\bar b\in (\mathcal{M}_\gamma)^\beta,\, \alpha<\mu,\, \beta<\kappa\}.\]
\end{list}
We treat $F_{\alpha,\beta}(\bar b)$ as a new formal object.
\end{definition}

For a cardinal $\kappa$, let $\reg (\kappa)$ be $\kappa^+$ if $\kappa$ is singular and $\kappa$ otherwise.

\begin{fact}\cite[Remark 2.3]{919}\label{F:cohenshelah}
Let $A$ and $\mathcal{M}_{\mu,\kappa}(A)$ be as before.
$\mathcal{M}_{\mu,\kappa}(A)$ is a set whose cardinality is at most $(|A|+\mu)^{<\reg (\kappa)}$ (though defined as a class).
%
\end{fact}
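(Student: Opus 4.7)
The key observation is that every element $x \in \mathcal{M}_{\mu,\kappa}(A)$ is a formal well-founded term whose internal nodes are labeled by some $F_{\alpha,\beta}$ and have $\beta < \kappa$ children. I assign to each $x$ a rank $\rk(x)$ by setting $\rk(x) = 0$ if $x \in A$ and $\rk(F_{\alpha,\beta}(\bar b)) = \sup_{i<\beta} \rk(b_i) + 1$ otherwise; a routine induction then gives $x \in \mathcal{M}_{\mu,\kappa,\gamma}(A)$ iff $\rk(x) \leq \gamma$.

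The first step is to show that $\mathcal{M}_{\mu,\kappa}(A)$ is actually a set, by proving that $\rk(x) < \reg(\kappa)$ for every $x$. Since $\reg(\kappa)$ is regular by definition and each term $F_{\alpha,\beta}(\bar b)$ has only $\beta < \kappa \leq \reg(\kappa)$ children, the supremum of $\beta$-many ordinals below $\reg(\kappa)$ stays strictly below $\reg(\kappa)$, and adding $1$ preserves this. Induction on the construction therefore yields $\rk(x) < \reg(\kappa)$ for all $x$, whence $\mathcal{M}_{\mu,\kappa}(A) = \mathcal{M}_{\mu,\kappa,\reg(\kappa)}(A)$ is a set.

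For the cardinality bound, write $\lambda = |A| + \mu$ and $\theta = \lambda^{<\reg(\kappa)}$ (we may assume $\lambda \geq 2$; otherwise the structure is trivial). I claim by transfinite induction that $|\mathcal{M}_{\mu,\kappa,\gamma}(A)| \leq \theta$ for every $\gamma \leq \reg(\kappa)$. The limit case is immediate since $|\gamma| < \reg(\kappa) \leq \theta$ and $\theta\cdot\theta = \theta$. At a successor stage, the number of potentially new elements is bounded by $\mu \cdot \sum_{\beta < \kappa} |\mathcal{M}_{\mu,\kappa,\gamma}(A)|^\beta \leq \mu \cdot \theta^{<\kappa}$, so the entire argument reduces to verifying the cardinal arithmetic identity $\theta^{<\kappa} = \theta$.

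For this identity, if $\kappa$ is regular then $\reg(\kappa) = \kappa$, and for $\alpha,\beta < \kappa$ regularity gives $|\alpha|\cdot|\beta| < \kappa$, so $(\lambda^\alpha)^\beta = \lambda^{|\alpha|\cdot|\beta|} \leq \lambda^{<\kappa} = \theta$. If $\kappa$ is singular then $\reg(\kappa) = \kappa^+$, so $\theta = \lambda^\kappa$, and for $\beta < \kappa$ one has $(\lambda^\kappa)^\beta = \lambda^{\kappa\cdot\beta} = \lambda^\kappa = \theta$. Either way $\theta^{<\kappa} = \theta$, and plugging this back closes the induction, giving $|\mathcal{M}_{\mu,\kappa}(A)| \leq \theta = (|A|+\mu)^{<\reg(\kappa)}$. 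The main obstacle is keeping the two cases ($\kappa$ regular vs.\ singular) straight in the cardinal arithmetic; everything else is bookkeeping, which explains why the definition of $\reg(\kappa)$ takes the form it does.
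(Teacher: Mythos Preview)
The paper does not prove this statement; it is quoted as a fact from \cite[Remark 2.3]{919} with no argument supplied. So there is nothing to compare against, and I can only assess your proof on its own merits.

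Your overall strategy is correct and standard: bound the rank by $\reg(\kappa)$ using regularity, then do a cardinality induction that reduces to the identity $\theta^{<\kappa}=\theta$ for $\theta=(|A|+\mu)^{<\reg(\kappa)}$. Two small points are worth tightening. First, in the limit step you write ``$|\gamma|<\reg(\kappa)$'', but at $\gamma=\reg(\kappa)$ itself (which is a limit ordinal) you only have $|\gamma|\le\reg(\kappa)$; the conclusion still goes through because $\reg(\kappa)\le 2^{<\reg(\kappa)}\le\theta$, but the inequality as stated is off by one case. Second, in the regular case of the key identity you show $(\lambda^{\alpha})^{\beta}\le\theta$ for all $\alpha,\beta<\kappa$, but this does not by itself yield $\theta^{\beta}\le\theta$, since cardinal exponentiation need not commute with suprema in the base. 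The missing line is that any $f\colon\beta\to\bigcup_{\alpha<\kappa}\lambda^{\alpha}$ lands in a single $\lambda^{\alpha^*}$ with $\alpha^*=\sup_{i<\beta}\alpha_i<\kappa$ by regularity, whence $\theta^{\beta}\le\kappa\cdot\sup_{\alpha<\kappa}(\lambda^{\alpha})^{\beta}\le\kappa\cdot\theta=\theta$. With these two tweaks the argument is complete.
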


%

\begin{remark}\label{R:bound-terms-freealg}
Fixing a set of variables $X=\{x_i:i<\reg(\kappa)\}$, the set of terms in $\mathcal{L}_{\mu,\kappa}$ in $X$ can be identified with $\mathcal{M}_{\mu,\kappa}(X)$. It follows from Fact \ref{F:cohenshelah} that their number is bounded by $(\reg(\kappa)+\mu)^{<\reg(\kappa)}$.
\end{remark}

For any permutation $\pi$ of $A$ we denote by $\widehat \pi$ the induced automorphism of $\mathcal{M}_{\mu,\kappa}(A)$.

\begin{definition}\label{D:homog-repres}
Let $M$ be a structure. A \emph{homogeneous representation} of $M$ in $M_{\mu,\kappa}(A)$ is a function $\Phi:M\to \mathcal{M}_{\mu,\kappa}(A)$ satisfying 
\begin{enumerate}
\item For every term $t(\bar x)$, where $\bar x$ is tuple of length $\beta<\kappa$ containing the variables of $t$, if $t(\bar a)\in \Img(\Phi)$ for some $\bar a\in A^{\U \beta}$ then $t(\bar b)\in \Img(\Phi)$ for all $\bar b\in A^{\U \beta}$; 
\item For any two finite sequences $\bar a,\bar b \in M^{\U n}$, if there exists an permutation $\pi$ of $A$ such that $\widehat \pi (\Phi(a_i))=\Phi(b_i)$, for all $i<n$, then \[\tp^{M}(\bar a)=\tp^{M}(\bar b).\]
\end{enumerate}

We say that $\Phi$ is a \emph{skeletal homogeneous representation} if it is an injective partial function satisfying $(1)$ and $(2)$ on its domain and that $\dcl(\dom (\Phi))=M$. 
\end{definition}

\begin{remark}\label{R:original-rep}
Representations were originally defined in \cite[Definition 2.1]{919} and the  definition was that of a function $\Phi:M\to\mathcal{M}_{\mu,\kappa}(A)$ satisfying that 
\[\mathrm{qftp}(\Phi(\bar a))=\mathrm{qftp}(\Phi(\bar b))\implies \tp^M(\bar a)=\tp^M(\bar b).\]
Since every permutation of $A$ lifts to an automorphism of the free algebra, the antecedent in condition (2) implies that $\Phi(\bar a)$ and $\Phi(\bar b)$ have the same quantifier-free type. As a result, every representation satisfies condition (2) of a homogeneous representation.
\end{remark}

\begin{proposition}\label{P:embed-shift-represen}
Let $M$ be a structure and assume there exists a skeletal homogeneous representation $\Phi:\dom(\Phi)\to \mathcal{M}_{\mu,\kappa}(A)$ of $M$, where $A$ is a pure set, $\kappa$ and $\mu$ are infinite, and that
\begin{enumerate}
\item $(\reg(\kappa)+\mu)^{<\reg(\kappa)}<\kap$,
\item $\beth_2(\lambda)<\kap$ for all $\lambda<\reg(\kappa)$,
\item $2^{<\reg(\kappa)}\leq |A|$.
\end{enumerate}
For every graph $G=(V,E)$ that is $\emptyset$-interpretable in $M$ with $\chi(G)\geq \kap$, where $\kap$ is a regular cardinal, there exists an $n\in\mathbb{N}$ such that $G$ contains all finite subgraphs of $\Sh_n(\omega)$.
\end{proposition}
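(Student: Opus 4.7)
The plan is to mimic the proof of Theorem~\ref{T:EM-model}, with the free algebra $\mathcal{M}_{\mu,\kap}(A)$ playing the role of the EM-model and the conditions of a homogeneous representation playing the role of indiscernibility. Write $V=V^*/{\sim}$ for an $\emptyset$-definable $V^*\subseteq M^k$ and $\emptyset$-definable $\sim$, with $E$ induced from an $\emptyset$-definable $E^*$ on $V^*$. Since $\dcl(\dom(\Phi))=M$, each element of $M$ has the form $f(\bar y)$ for some $\emptyset$-definable partial function $f$ and tuple $\bar y\in\dom(\Phi)^{<\omega}$, and each coordinate of $\bar y$ is $\Phi^{-1}(s(\bar a))$ for an $\mathcal{L}_{\mu,\kap}$-term $s$ and some $\bar a\in A^{<\U{\reg(\kap)}}$. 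Merging coordinate-wise presentations of an element of $V^*\subseteq M^k$ into a single input tuple (using that finite unions of sets of size ${<}\reg(\kap)$ still have size ${<}\reg(\kap)$), every $v\in V$ admits a presentation
\[v=\bigl[F\bigl(\Phi^{-1}(\bar s(\bar a))\bigr)\bigr]\]
for a triple $(F,\bar s,\lambda)$ with $F$ an $\emptyset$-definable function, $\bar s$ a finite tuple of $\mathcal{L}_{\mu,\kap}$-terms in $\lambda<\reg(\kap)$ variables, and $\bar a\in A^{\U\lambda}$. By Remark~\ref{R:bound-terms-freealg} and hypothesis~(1), the total number of such triples is ${<}\kap$, so by Lemma~\ref{L:basic-prop-chi}(1) and regularity of $\kap$ we may fix a triple $(F,\bar s,\lambda)$ whose image $V_0:=\{[F(\Phi^{-1}(\bar s(\bar a)))]:\bar a\in A^{\U\lambda}\}$ satisfies $\chi(V_0,E\restriction V_0)\geq\kap$.

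Writing $H(\bar a):=[F(\Phi^{-1}(\bar s(\bar a)))]$ and transferring the edge relation via $\bar a\mathrel{E_H}\bar b\iff H(\bar a)\E H(\bar b)$, the map $H$ becomes a surjective graph homomorphism $(A^{\U\lambda},E_H)\twoheadrightarrow(V_0,E\restriction V_0)$ with biconditional edge preservation; hence $\chi(A^{\U\lambda},E_H)\geq\kap$ by Lemma~\ref{L:basic-prop-chi}(4). The crucial step is verifying hypothesis~(3) of Proposition~\ref{P:finding-a-shift-graph-no-order}: if $\bar a\mathrel{E_H}\bar b$ and $f_{\bar a,\bar b}=f_{\bar c,\bar d}$, then $\bar c\mathrel{E_H}\bar d$. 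The equality $f_{\bar a,\bar b}=f_{\bar c,\bar d}$, combined with injectivity of each tuple, produces a permutation $\pi$ of $A$ with $\pi(\bar a)=\bar c$ and $\pi(\bar b)=\bar d$; its induced automorphism $\widehat\pi$ of the free algebra maps $\bar s(\bar a),\bar s(\bar b)$ coordinatewise to $\bar s(\bar c),\bar s(\bar d)$. Condition~(1) of a homogeneous representation keeps these values inside $\Img(\Phi)$, while condition~(2), applied to the concatenated tuples, yields
\[\tp^M\bigl(\Phi^{-1}(\bar s(\bar a)),\Phi^{-1}(\bar s(\bar b))\bigr)=\tp^M\bigl(\Phi^{-1}(\bar s(\bar c)),\Phi^{-1}(\bar s(\bar d))\bigr).\]
Since $F$ and $E^*$ are $\emptyset$-definable, this forces $H(\bar a)\E H(\bar b)\iff H(\bar c)\E H(\bar d)$, as required.

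The remaining hypotheses of Proposition~\ref{P:finding-a-shift-graph-no-order} are immediate from ours: $\beth_2(\lambda)<\kap$ by hypothesis~(2) (applied to $\lambda<\reg(\kap)$), and $2^\lambda\leq 2^{<\reg(\kap)}\leq|A|$ by hypothesis~(3). Applying the proposition yields an injective homomorphism $\Sh_n(\omega)\to(A^{\U\lambda},E_H)$ for some $n$; composing with $H$ gives a graph homomorphism $\Sh_n(\omega)\to G$, and the desired conclusion follows from the ``consequently'' part of Proposition~\ref{P:homomorphism-is-enough}. I anticipate the main obstacle to be the first step's bookkeeping: merging the coordinate-wise presentations of elements of $V^*\subseteq M^k$ into a uniform triple $(F,\bar s,\lambda)$ with a single underlying $\bar a\in A^{\U\lambda}$, and confirming that the total number of such triples (absorbing the contribution of the language of $M$) stays strictly below $\kap$.
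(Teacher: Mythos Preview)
Your proposal is correct and follows essentially the same route as the paper's proof: reduce to a graph on $A^{\U\lambda}$ via terms of the free algebra and $\emptyset$-definable functions, use hypothesis~(1) and regularity of $\kap$ to isolate a single term-triple with large chromatic number, invoke the permutation-invariance from Definition~\ref{D:homog-repres}(2) to verify hypothesis~(3) of Proposition~\ref{P:finding-a-shift-graph-no-order}, and finish with Proposition~\ref{P:homomorphism-is-enough}. The paper's bookkeeping is slightly different---it encodes the $\emptyset$-definable functions $f_i$ by an extra layer of free-algebra function symbols $F_{i,|\bar v_i|}$ and builds a formal hierarchy $\mathfrak{t}_0\supseteq\mathfrak{t}_1$ of terms to count---but this accomplishes exactly the ``merging'' step you flag as the main obstacle, and the core argument is the same. (A small correction: at the end you need only the first part of Proposition~\ref{P:homomorphism-is-enough}, not the ``consequently'' clause, since you already have a homomorphism from $\Sh_n(\omega)$ itself.)
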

\begin{proof}
Since $G$ is interpretable in $M$ there exist $r\in \mathbb{N}$, a definable subset $V_0\subseteq M^r$ and an interpretation $g:V_0\to V$ (see \cite[Section 5.3]{hodges}). By definition, $g$ is surjective and $G_0=(V_0,g^{-1}(E))$ is a definable graph. Thus $g$ is a surjective homomorphism and by Lemma \ref{L:basic-prop-chi}(4) $\chi(G)=\chi(G_0)$. Note that if $G_0$ contains all finite subgraphs of $\Sh_n(\omega)$ then by Proposition \ref{P:homomorphism-is-enough} so does $G$ (maybe for a different $n$). Consequently, we may assume that the graph $G=(V,E)$ is $\emptyset$-definable in $M^r$.

Let $D=\dom (\Phi)$ and let $\langle f_i(\bar v_i): i<\omega\rangle$ be an enumeration of all $\emptyset$-definable functions to $M$. 
Let \[\mathcal{U}=\{F_{i,|\bar v_i|}(b_0,\dots, b_{|\bar v_i|-1}): b_0,\dots,b_{|\bar v_i|-1}\in \Img(\Phi),\, i<\omega\}\subseteq \mathcal{M}_{\mu,\kappa}(A).\]
Define a surjective map $\Psi_0:\mathcal{U}\to M$ by mapping $F_{i,|\bar x_i|}(b_0,\dots, b_{|\bar v_i|-1})$ to \[f_i(\Phi^{-1}(b_0),\dots,\Phi^{-1}(b_{|\bar v_i|-1})).\] Note that $\Phi$ is injective so this is well defined.

Let $\Psi_1=(\Psi_0)^r:(\mathcal{U})^r\to M^r$, $\mathcal{V}=\{a\in (\mathcal{U})^r: \Psi_1(a)\in V\}$ and $\Psi=\Psi_1\restriction \mathcal{V}:\mathcal{V}\to V$. Let $\mathcal{E}=\Psi^{-1}(E)$, hence $\mathcal{G}=(\mathcal{V},\mathcal{E})$ is a graph and note that $\chi(G)=\chi(\mathcal{G})$ by Lemma \ref{L:basic-prop-chi}(4). Let $\nu =(\reg(\kappa)+\mu)^{<\reg(\kappa)}$.

Let $X=\{x_i: i<\reg (\kappa)\}$ be a set of variables as in Remark \ref{R:bound-terms-freealg}. Let $\mathfrak{t}_0$ be the set of pairs $(t,\bar x)$, where $\bar x$ is a sequence of variables from $X$ of length $<\reg(\kappa)$ and $t$ is a term in $\mathcal{L}_{\mu,\kappa}$ with variables contained in $\bar x$. Let $\mathfrak{t}_1$ be the subset of $\mathfrak{t}_0$ consisting of pairs of the form $(F_{i,|\bar v_i|}(t_0,\dots, t_{|\bar v_i|-1}),\bar x)$, where $i<\omega$, and $(t_0,\bar x),\dots, (t_{|\bar v_i|-1},\bar x)\in \mathfrak{t}_0$. Let $\mathfrak{t}=\{((s_0,\bar x_0),\dots,(s_{r-1},\bar x_{r-1}))\in (\mathfrak{t}_1)^r: \bar x_0=\ldots=\bar x_{r-1}\}$. We may enumerate $\mathfrak{t}=\{\bar s_i(\bar x_i):i<\nu\}$, where for ease of notation we write $(\bar s,\bar x)$ as $\bar s(\bar x)$.

Since $\mathcal{V}$ is covered by the union of $\{\bar s_i(\bar a):\bar a\in A^{\underline{|\bar x_i|}}\}_{i<\nu}$, $\mathcal{V}=\bigcup_{i< \nu} \mathcal{V}_i$, where $\mathcal{V}_i=\{\bar s_i(\bar a):\bar a\in A^{\underline{|\bar x_i|}}\}\cap \mathcal{V}$.

By Lemma \ref{L:basic-prop-chi}(1), assumption $(1)$ and since $\kap$ is regular, there exists some $i<\nu$  with $\chi(\mathcal{G}_i)\geq \kap$, where $\mathcal{G}_i=(\mathcal{V}_i,\mathcal{E}\restriction \mathcal{V}_i\times \mathcal{V}_i)$.

Set $\bar s=\bar s_i$ and $\bar x=\bar x_i$. Assume, for simplicity, that \[\bar s(\bar x)=(F_{0,k_0}(t_{0,0}(\bar x),\dots, t_{0,k_0-1}(\bar x)),\dots, F_{r-1,k_{r-1}}(t_{{r-1},0}(\bar x),\dots, t_{{r-1},k_r-1}(\bar x))).\]
\begin{claim}
$\bar s$ defines a surjective function $A^{\underline{|\bar x|}}\to \mathcal{V}_i$.
\end{claim}
\begin{claimproof}
Since $\mathcal{G}_i$ is non-empty, there exists $\bar a\in A^{\U{|\bar x|}}$ such that $\bar s(\bar a)\in \mathcal{V}$. Let $\bar b\in A^{\U{|\bar x|}}$. By Definition \ref{D:homog-repres}(1), $F_{i,k_i}(t_{i,0}(\bar b),\dots, t_{i,k_i-1}(\bar b))\in \mathcal{U}$ for all $i<r$. Note that $|\bar x|<|A|$ by assumption (3) and so there exists a permutation $\pi$ of $A$ mapping $\bar a$ to $\bar b$, and let $\widehat \pi$ be induced automorphism of $\mathcal{M}_{\mu,\kappa}$. Thus $\widehat \pi(\bar s(\bar a))=\bar s(\bar b)$. Since $V$ is $\emptyset$-definable and $\Psi_1(\bar s(\bar a))\in V$, Definition \ref{D:homog-repres}(2) gives that $\tp^M(\Psi_1(\bar s(\bar b))=\tp^M(\Psi_1(\bar s(\bar a))$ and hence $\Psi_1(\bar s(\bar b)\in V$ as well. Consequently, $\bar s$ defines a function. Surjectivity is straightforward.
\end{claimproof}
%
Let $R=\bar{s}^{-1}(\mathcal{E}\restriction \mathcal{V}_i\times \mathcal{V}_i)$ be the edge relation $\bar s$ induces on $A^{\U{|\bar x|}}$.

By assumptions $(2,3)$, in order to apply Proposition \ref{P:finding-a-shift-graph-no-order}, we are left to verify assumption $(3)$ of Proposition \ref{P:finding-a-shift-graph-no-order}. 

Let $\bar a,\bar b,\bar c,\bar d\in A^{\underline{|\bar x|}}$ satisfying $\bar a \R \bar b$ and $f_{\bar a,\bar b}=f_{\bar c,\bar d}$. The latter condition implies that the coordinate-wise map sending $\bar a\bar b$ to $\bar c\bar d$ is well defined and injective. Since $|\bar x|< |A|$, we may find a permutation $\pi$ of $A$ which maps $\bar a\bar b$ to $\bar c\bar d$. This permutation lifts to an automorphism $\widehat \pi$ of the free algebra, with $\widehat \pi(\bar s(\bar a))= \bar s(\bar c)$ and $\widehat \pi(\bar s(\bar b))=\bar s(\bar d)$. 

 Thus $\widehat \pi(t_{i,j}(\bar a))=t_{i,j}(\bar c)$ and $\widehat \pi(t_{i,j}(\bar b))=t_{i,j}(\bar d)$, for $i<r$ and $j<k_i$. By Definition \ref{D:homog-repres}(2), 
\[\tp^M((\Phi^{-1}(t_{i,j}(\bar a)))_{i<r, j<k_i},(\Phi^{-1}(t_{i,j}(\bar b)))_{i<r, j<k_i})=\]
\[\tp^M((\Phi^{-1}(t_{i,j}(\bar c)))_{i<r, j<k_i},(\Phi^{-1}(t_{i,j}(\bar d)))_{i<r, j<k_i}),\] and consequently
\[\tp^M(\Psi(\bar s(\bar a)),\Psi(\bar s(\bar b)))=\tp^M(\Psi(\bar s(\bar c)),\Psi(\bar s(\bar d))).\]
Since $\bar a \R \bar b$, $\bar s(\bar a) \mathrel{\mathcal{E}} \bar s(\bar b)$ and so $\Psi(\bar s(\bar a))\mathrel{\E}\Psi(\bar s(\bar b))$. As this is specified by the type of the pair, \[\Psi(\bar s(\bar c))\mathrel{E}\Psi(\bar s(\bar d))\] as well. As a result, $\bar s(\bar c)\mathrel{\mathcal{E}} \bar s(\bar d)$ and $\bar c \R \bar d$.

By Proposition \ref{P:finding-a-shift-graph-no-order}, there exists $m\in \mathbb{N}$ and an injective homomorphism from $\Sh_m(\omega)$ to $A^{\U{|\bar x|}}$. By composing with $\bar s$ and $\Psi$ and applying Proposition \ref{P:homomorphism-is-enough}, there exists $n\leq m$ such that $G$ contains all finite subgraphs of $\Sh_n(\omega)$.
\end{proof}

If $T$ is a countable $\omega$-stable theory and $M\models T$, one may find an injective representation, in the sense of Remark \ref{R:original-rep}, of $M$ in $\mathcal{M}_{\aleph_0,\aleph_0}(A)$, for some set $A$, see \cite[Theorem 4.4]{919}. Similarly superstable theories may be represented in $\mathcal{M}_{2^{|T|},\aleph_0}(A)$, for some set $A$, see \cite[Theorem 2.1]{superstable}. However, we may not apply the previous proposition to these representations since they may not homogeneous. We build such homogeneous representations for stable theories in which every type is stationary.

\begin{definition}
We say that a theory $T$ is \emph{stationary} if all types (over any set) are stationary.
\end{definition}

\begin{remark}
Rothmaler studies stationarity of modules in \cite{stationary}, e.g. he gives a complete description of stationary abelian groups in \cite[Theorem 4(ii)]{stationary}.
\end{remark}

\begin{fact}\cite[Lemma 2, Theorem 1]{stationary}\label{F:stationarity}
Let $T$ be a stable theory. The following are equivalent:
\begin{enumerate}
\item $T$ is stationary;
\item for any $A$, every formula which is almost over $A$ is over $A$;
\item all $1$-types over (over any set) are stationary.
\end{enumerate}
\end{fact}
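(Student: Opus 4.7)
The plan is to prove the three implications (1) $\Rightarrow$ (3), (3) $\Rightarrow$ (2), and (2) $\Rightarrow$ (1). The first is immediate, since a $1$-type is a type. The work is in (3) $\Rightarrow$ (2) and (2) $\Rightarrow$ (1), and the proofs will pass through $T^{eq}$ and the identity $\acl^{eq}(A) = \dcl^{eq}(A)$.

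For (3) $\Rightarrow$ (2), let $\varphi(\bar x, \bar b)$ be almost over $A$. I would take its canonical parameter $e = \ulcorner \varphi(\bar x, \bar b) \urcorner$ in $T^{eq}$; since the set of $A$-conjugates of $\varphi(\bar x,\bar b)$ is finite, the $A$-orbit of $e$ is finite, so $e \in \acl^{eq}(A)$. Apply hypothesis (3) to the $1$-type $\tp(e/A)$ (a $1$-type in a sort of $T^{eq}$): it is algebraic and stationary. The key observation is that an algebraic stationary type must have a unique realization: if $e_0, e_1, \dots, e_n$ all realize $\tp(e/A)$ with $e_0 = e$, then all have the same type over $\acl^{eq}(A)$ by stationarity; since $e_0 \in \acl^{eq}(A)$, the formula $x = e_0$ belongs to this common type, forcing $e_i = e_0$. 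Hence $e \in \dcl^{eq}(A)$, so $\varphi(\bar x,\bar b)$ is equivalent to a formula over $A$.

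For (2) $\Rightarrow$ (1), I would first note that (2) applied to the formula $x = e$ (for $e \in \acl^{eq}(A)$) yields $\acl^{eq}(A) = \dcl^{eq}(A)$ for every set $A$. Now fix $p \in S(A)$ and let $q_1, q_2 \in S(\acl^{eq}(A))$ be two non-forking extensions of $p$. Any $\acl^{eq}(A)$-formula $\psi(x, e)$ with $e \in \acl^{eq}(A) = \dcl^{eq}(A)$ is equivalent (modulo $T^{eq}$) to some $A$-formula $\chi(x)$, and therefore $\psi(x, e) \in q_i$ if and only if $\chi(x) \in p$; this forces $q_1 = q_2$. Since $p$ has a unique extension to $\acl^{eq}(A)$, it has a unique non-forking extension everywhere (the one obtained by first extending to $\acl^{eq}(A)$ and then taking the non-forking extension, which is unique for a type over an $\acl^{eq}$-closed set in a stable theory). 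Thus $p$ is stationary.

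The main conceptual hurdle is the bookkeeping between $T$ and $T^{eq}$ in the step (3) $\Rightarrow$ (2): one must be comfortable reading (3) as applying to $1$-types in any sort of $T^{eq}$, so that it can be invoked at the canonical parameter of $\varphi(\bar x, \bar b)$. Once that is granted, the structural reason behind the equivalence, namely that stationarity of all types is precisely the statement $\acl^{eq}(A) = \dcl^{eq}(A)$ for every $A$, carries the argument through.
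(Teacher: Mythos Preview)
The paper does not give its own proof of this statement: it is recorded as a Fact with a citation to Rothmaler, so there is nothing to compare against directly. I will therefore evaluate your argument on its own merits.

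Your implications $(1)\Rightarrow(3)$ and $(2)\Rightarrow(1)$ are correct. The gap is in $(3)\Rightarrow(2)$, and you identify it yourself without closing it: hypothesis (3) speaks of $1$-types, which in this context means $1$-types in the home sort of $T$, whereas you apply it to $\tp(e/A)$ for $e$ a canonical parameter living in an imaginary sort. Saying one ``must be comfortable reading (3) as applying to $1$-types in any sort of $T^{eq}$'' is precisely the nontrivial step, and it does not come for free. Indeed, to pass from stationarity of home-sort $1$-types to stationarity of an imaginary $1$-type $\tp(\bar a/E\,/\,A)$, you end up needing stationarity of the home-sort tuple type $\tp(\bar a/A)$, which is condition (1), not (3).

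The clean fix is to reorganize the cycle: prove $(3)\Rightarrow(1)$ directly by induction on the length of the tuple. Given $p(x_1,\ldots,x_n)\in S_n(A)$ and two realizations $\bar a,\bar a'$ with $\bar a\forkindep_A B$ and $\bar a'\forkindep_A B$, use stationarity of the $1$-type $\tp(a_1/A)$ to get $a_1\equiv_B a_1'$, conjugate so that $a_1=a_1'$, and then apply stationarity of the $1$-type $\tp(a_2/Aa_1)$ over the larger base, and so on. Once you have (1), your argument for $\acl^{eq}(A)=\dcl^{eq}(A)$ goes through using stationarity of the home-sort tuple $\bar a$ representing $e$ (rather than of $e$ itself), and $(1)\Rightarrow(2)$ follows. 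The underlying idea you describe---that stationarity is equivalent to $\acl^{eq}=\dcl^{eq}$ everywhere---is correct; the only issue is that your chosen ordering of the implications forces you to invoke the imaginary version of (3) before you have earned it.
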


\begin{proposition}\label{P:"reduct" is stationary}
Let $T$ be a complete stationary stable theory in a language $\mathcal{L}$. For every sublanguage $\mathcal{L}_0\subseteq \mathcal{L}$ there is some $\mathcal{L}_0\subseteq \mathcal{L}^\prime\subseteq \mathcal{L}$ with $|L^\prime|=|\mathcal{L}_0|+\aleph_0$ such that $T\restriction \mathcal{L}^\prime$ is stationary.
\end{proposition}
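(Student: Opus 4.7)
The plan is to build $\mathcal{L}'$ as the union of an increasing $\omega$-chain $\mathcal{L}_0 = \mathcal{L}^{(0)} \subseteq \mathcal{L}^{(1)} \subseteq \cdots$ of sublanguages of $\mathcal{L}$, each $\mathcal{L}^{(n)}$ of cardinality $|\mathcal{L}_0|+\aleph_0$, by adding at each stage enough witnesses to stationarity for the $\mathcal{L}^{(n)}$-formulas. The final $\mathcal{L}' = \bigcup_n \mathcal{L}^{(n)}$ will then have the required cardinality, and $T\upharpoonright\mathcal{L}'$ will be stationary by Fact~\ref{F:stationarity}(2).

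At stage $n+1$, for each $\mathcal{L}^{(n)}$-formula $\varphi(x,\bar y)$, each $I \subseteq \{0,\ldots,|\bar y|-1\}$, each $k \in \mathbb{N}$ and each $\rho(u, \bar z) \in (\mathcal{L}^{(n)})^{\mathrm{eq}}$, I would add finitely many $\mathcal{L}$-formulas $\psi_1(x,\bar z), \ldots, \psi_\ell(x,\bar z)$ with the following uniform property: for every $\bar b$ with $\rho([\bar b]_{E_\varphi}, \bar b_I)$ holding and $|\rho^\mathcal{M}(\cdot,\bar b_I)| \leq k$, we have $\psi_j(x,\bar b_I) \equiv_T \varphi(x,\bar b)$ for some $j \leq \ell$; here $E_\varphi(\bar y, \bar y') := \forall x(\varphi(x,\bar y) \leftrightarrow \varphi(x,\bar y'))$. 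The existence of such a finite list follows from a compactness argument in $T^{\mathrm{eq}}$ using stationarity of $T$: the partial type
\[\Sigma(\bar y,\bar z) = \{\bar y_I = \bar z,\; \rho([\bar y]_{E_\varphi}, \bar z),\; |\rho^\mathcal{M}(\cdot,\bar z)| \leq k\} \cup \{\exists x(\varphi(x,\bar y) \not\leftrightarrow \psi(x,\bar z)) : \psi \in \mathcal{L}\}\]
is inconsistent, since any realization would yield $\varphi(x,\bar b)$ almost over $\bar b_I$ yet not equivalent to any $\mathcal{L}$-formula with parameters from $\bar b_I$, contradicting Fact~\ref{F:stationarity}(2); compactness then extracts the required finite list.

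There are $|\mathcal{L}_0|+\aleph_0$ many quadruples $(\varphi, I, k, \rho)$ to process at each stage and each contributes only finitely many formulas, so the cardinality bound $|\mathcal{L}^{(n+1)}| = |\mathcal{L}_0|+\aleph_0$ is preserved. To verify stationarity of $T\upharpoonright\mathcal{L}'$ via Fact~\ref{F:stationarity}(2), let $\varphi(x,\bar b)$ be an $\mathcal{L}'$-formula almost over a set $A$ in the reduct, and let $I$ index the $A$-coordinates of $\bar b$ (augmenting $\bar b$ by additional elements of $A$ if necessary, so that any putative witness uses parameters from $\bar b_I$ only). Then $[\bar b]_{E_\varphi} \in \acl^{\mathrm{eq}}_{\mathcal{L}'}(\bar b_I)$, witnessed by some $\rho \in (\mathcal{L}')^{\mathrm{eq}}$ and some finite bound $k$. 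Choosing $n$ large enough that both $\varphi \in \mathcal{L}^{(n)}$ and $\rho \in (\mathcal{L}^{(n)})^{\mathrm{eq}}$, the witnesses added at stage $n+1$ for $(\varphi, I, k, \rho)$ lie in $\mathcal{L}'$ and provide some $\psi_j \in \mathcal{L}'$ with $\varphi(x,\bar b) \equiv \psi_j(x,\bar b_I)$, as required. The main obstacle is precisely this interplay between reduct and full algebraic closure: one must verify that every $\rho$ witnessing ``almost-over in the reduct'' is already captured at some finite stage of the chain, which amounts to a standard cofinality argument applied to $\mathcal{L}' = \bigcup_n \mathcal{L}^{(n)}$, together with the reduction $\acl^{\mathrm{eq}}_{\mathcal{L}'} \subseteq \acl^{\mathrm{eq}}_{\mathcal{L}}$ needed to invoke stationarity of the ambient $T$.
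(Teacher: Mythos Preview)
Your approach is correct and follows the same overall architecture as the paper's proof: build $\mathcal{L}'$ as the union of an $\omega$-chain, at each stage use compactness (together with stationarity of the full $T$) to add finitely many witness formulas, and verify stationarity of $T\upharpoonright\mathcal{L}'$ via Fact~\ref{F:stationarity}(2). The difference lies in how you parametrize what must be closed under. You track quadruples $(\varphi, I, k, \rho)$, passing through the canonical parameter $[\bar b]_{E_\varphi}$ and an $(\mathcal{L}^{(n)})^{\mathrm{eq}}$-formula $\rho$ witnessing its algebraicity; this works, but it forces you to manage $T^{\mathrm{eq}}$-sorts and to augment $\bar b$ so that the parameters of $\rho$ appear as a subtuple $\bar b_I$ (and, strictly, to adjust $\rho$ to the $E_{\varphi'}$-sort of the augmented formula). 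The paper parametrizes more directly by pairs $(\psi(x,x',z),n)$, where $\psi$ is a potential equivalence-relation formula and $n$ bounds the number of classes: compactness then yields finitely many $\theta_i(x,z)$ such that whenever $\psi(-,-,c)$ is an equivalence relation with at most $n$ classes, each class equals some $\theta_i(-,c)$. This sidesteps $T^{\mathrm{eq}}$ entirely and makes the verification a one-liner, since ``almost over $A$'' is witnessed precisely by such a $\psi(-,-,c)$ with $c\in A$. Your route is in principle more flexible, but here the paper's is noticeably cleaner.
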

\begin{proof}
Recall that a formula $\varphi(x,d)$ is almost over $A$ if there exists an equivalence relation with finitely many classes $E(x,x^\prime)$ over $A$ such that $\forall x\forall x^\prime(E(x,x^\prime)\to (\varphi(x,d)\leftrightarrow \varphi(x^\prime,d)))$. Equivalently, for every equivalence relation with finitely many classes $E(x,x^\prime)$ over $A$ every class of $E$ is definable over $A$.

\begin{claim}
For every $\psi(x,x^\prime,z)$ and $n<\omega$ there are finitely many formulas $\theta_i(x,z)$ ($i<k$) such that
\begin{itemize}
\item[($\dagger$)] for any $z$-tuple $c$ such that $\psi(x,x^\prime,c)$ defines an equivalence relation with $\leq n$ classes, and for any $x^\prime$-tuple $d$ 
there is some $i<k$ such that $\psi(x,d,c)$ is equivalent to $\theta_i(x,c)$.
\end{itemize}
\end{claim}
\begin{claimproof}
Note that ($\dagger$) is a first order sentence.

Suppose not and fix $\psi(x,x^\prime,z)$ and $n<\omega$. This means that for every finite collection of formulas $\theta_i(x,z)$ ($i<k$) there are some $c$ and $d$ witnessing the failure of ($\dagger$). Let $\Gamma(x^\prime,z)$ be 
\[\{\text{$\psi(-,-,z)$ defines an equivalence relation with $\leq n$ classes}\}\cup\]
\[\{\exists x \neg (\psi(x,x^\prime,z)\leftrightarrow \theta(x,z)): \text{ $\theta(x,z)$ any formula}\}.\]
By assumption, $\Gamma$ is consistent. Let $(d,c)\models \Gamma (x^\prime,z)$. Then $\psi(x,d,c)$ is almost over $c$ but not over $c$, contradiction.
\end{claimproof}

Now, let $\mathcal{L}_0\subseteq \mathcal{L}$ be a sublanguage. We construct an increasing sequence  of languages $\mathcal{L}_m$ as follows. The language $\mathcal{L}_0$ is given. Assume we have constructed $\mathcal{L}_m$. For any $\psi(x,x^\prime,z)$ in the language $\mathcal{L}_{m}$ and $n<\omega$ let $\{\theta_{\psi,n,i}(x,z)\}_{i<k_{\psi,n}}$ be a finite set of formulas satisfying $(\dagger)$ (such a set exists by the claim). Let $\mathcal{L}_{m+1}=\mathcal{L}_{m}\cup \{\text{the symbolds in the formula $\theta_{\psi,n,i}$}: \psi\in \mathcal{L}_{m},\, n<\omega, i<k_{\psi,n}\}$. Now set $\mathcal{L}^\prime=\bigcup_{m<\omega} \mathcal{L}_m$. It follows that $T\restriction \mathcal{L}^\prime$ is stationary by Fact \ref{F:stationarity}(2).
\end{proof}

We leave the proof of the following easy lemma to the reader.
\begin{lemma}\label{L:stationarity-implies-not-algebraicity}
If $T$ is a complete stationary stable theory then $\dcl(A)=\acl(A)$ for any set $A$.
\end{lemma}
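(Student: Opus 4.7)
The plan is to show the nontrivial inclusion $\acl(A)\subseteq \dcl(A)$. Given $b\in \acl(A)$, I would consider the type $p=\tp(b/A)$, which is algebraic: let its realizations be $b_1=b,b_2,\ldots,b_n$. Since each $b_i\in \acl(A)$, the formula $x=b_i$ is a formula over $\acl(A)$, so the types $\tp(b_i/\acl(A))$ (for $i=1,\ldots,n$) are pairwise distinct extensions of $p$ to $\acl(A)$.

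The main tool will be the standard equivalent formulation of stationarity in stable theories: a type $p\in S(A)$ is stationary if and only if it has a unique extension to $\acl(A)$ (equivalently, a unique non-forking extension to every $B\supseteq A$). By the stationarity of $T$, the type $p$ is stationary, so this uniqueness forces $n=1$. Hence $b$ is the unique realization of $\tp(b/A)$, meaning $b\in \dcl(A)$.

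There is no serious obstacle here; the content is really just the equivalence between stationarity and uniqueness of the extension to $\acl(A)$, which is precisely what the hypothesis provides. (One could also phrase this by working with $\stp(b/A)$: the strong type of $b$ implies $b\in\dcl(\acl(A))$, but since $T$ is stationary strong types and types coincide, so $b\in\dcl(A)$.)
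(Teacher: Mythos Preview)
Your argument is correct: since each $b_i\in\acl(A)$, the types $\tp(b_i/\acl(A))$ are algebraic (hence non-forking over $A$) and pairwise distinct, so stationarity of $p$ forces $n=1$. The paper does not actually supply a proof of this lemma---it is left to the reader---so there is nothing to compare against; your argument is the standard one.
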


\begin{remark}
If $T$ is a stable theory, then, since for any $A$ every type over $\acl^{eq}(A)$ is stationary, if $T$ is eliminates imaginaries and has no algebraicity (i.e. $\acl(A)=\dcl(A)$ for any $A$) then $T$ is stationary. However, as the theory of the infinite set shows, the other direction is not true (it does not eliminate imaginaries).
\end{remark}

We will need the following lemma, which is a consequence of \cite[Lemma III.3.10]{classification}, but for the convenience of the reader we give a direct proof. Recall the definition of $\kappa(T)$ from \cite[Definition III.3.1]{classification}. 
For any infinite indiscernible sequence $I$ and a set $A$, let $\lim(I/A)$ be the limit type of $I$ in $A$ (it is denoted by $\mathrm{Av}(I,A)$ in \cite{classification}), i.e. 
\[\lim(I/A)=\{\varphi(x,c):c\in A,\, \text{ $\varphi(a,c)$ holds for cofinitely many $a\in I$}\}.\] It is a consistent complete type over $A$ by stability.


\begin{lemma}\label{L:lambda-saturation}
Let $T$ be a stationary stable theory, $M$ a model and $\lambda>\kappa(T)$ a cardinal. If for every non-algebraic type $q\in S(C)$ with $|C|<\kappa(T)$ and $C\subseteq M$ there is a $C$-independent set of realizations of $q$ in $M$ of cardinality $\lambda$, then $M$ is $\lambda$-saturated.
\end{lemma}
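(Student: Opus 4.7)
The plan is to realize an arbitrary $p \in S(A)$ with $A \subseteq M$ and $|A| < \lambda$. If $p$ is algebraic then it is automatically realized in $M$, so I would assume $p$ is non-algebraic. Using the local character of forking in stable theories, I would pick $A_0 \subseteq A$ with $|A_0| < \kappa(T)$ such that $p$ does not fork over $A_0$, and set $q := p \restriction A_0$; then $q$ is non-algebraic, and by stationarity $p$ is the unique non-forking extension of $q$ to $A$.

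The hypothesis then supplies an $A_0$-independent set $I \subseteq M$ of realizations of $q$ with $|I| = \lambda$. Because $q$ is stationary, such an $A_0$-independent family of realizations is in fact an indiscernible set over $A_0$ (a Morley sequence in $q$, viewed without order), since the type of each element over the rest is forced to be the unique non-forking extension of $q$. I would then consider the limit type $r := \lim(I/A_0 \cup A)$, which is a complete extension of $q$, definable over $A_0$, and hence non-forking over $A_0$. Applying stationarity once more, $r \restriction A$ must coincide with $p$.

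To finish, I would locate some $a \in I$ realizing $p$. For each formula $\varphi(x,\bar c) \in p$, the definition of the limit type directly gives that $\{a \in I : \neg\varphi(a,\bar c)\}$ is finite. Summing over all formulas in $p$, the exceptional set in $I$ has cardinality at most $|T|+|A| < \lambda$, so most elements of $I$ avoid it and realize $p$ in $M$.

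The main conceptual step, and the only place stationarity is really used, is in identifying $r\restriction A$ with $p$: this is what allows the averaging over $I$, which a priori only sees $q$, to pin down the whole of $p$. The rest is routine stability theory plus the cardinality bookkeeping in the final pigeonhole.
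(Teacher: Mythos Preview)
Your approach is essentially the paper's: reduce to non-algebraic $p$, pass to $A_0\subseteq A$ with $|A_0|<\kappa(T)$ over which $p$ does not fork, use the hypothesis to get a Morley sequence $I$ of size $\lambda$ in $q=p\restriction A_0$ (stationarity turns the $A_0$-independent set into an indiscernible set), identify $\lim(I/A)$ with $p$ via stationarity, and conclude that some element of $I$ realizes $p$.

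The one genuine gap is in your final count. You bound the exceptional set by $|T|+|A|$ and then assert $|T|+|A|<\lambda$, but the lemma only assumes $\lambda>\kappa(T)$, not $\lambda>|T|$; for instance, if $T$ is superstable in an uncountable language then $\kappa(T)=\aleph_0$ while $|T|$ may well exceed $\lambda$. The paper closes this by invoking \cite[Corollary~III.3.5(1)]{classification}: for any indiscernible set $I$ in a stable theory and any set $A$ there is $I_0\subseteq I$ with $|I_0|\leq\kappa(T)+|A|$ such that $I\setminus I_0$ is indiscernible over $A$. Since $\kappa(T)+|A|<\lambda$, the remainder $I\setminus I_0$ is infinite and each of its elements realizes $\lim(I/A)=p$. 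Your argument becomes correct once the naive formula-by-formula bound is replaced by this sharper estimate.
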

\begin{proof}
Let $p\in S(A)$ be a complete type with $|A|<\lambda$. If $p$ is algebraic then it is realized, so we may assume that $p$ is non-algebraic. Let $C\subseteq A$ with $|C|<\kappa(T)$ be such that $p$ does not fork over $C$. By assumption, we may find a $C$-independent set $I$ of realizations of $p|C$ in $M$ (so indiscernible over $C$ by stationarity). By \cite[Lemma III.1.10(2)]{classification}, $\lim(I/A)=p$. By \cite[Corollary III.3.5(1)]{classification}, there is $I_0\subseteq I$ with $I\setminus I_0$ indiscernible over $A$ and $|I_0|\leq \kappa(T)+|A|<\lambda$. In particular, $|I\setminus I_0|\geq \aleph_0$ and thus for every $c\in I\setminus I_0$, $p=\tp(c/A)$.
\end{proof}

We fix the following notation for the rest of the section. Let $T$ be complete stable theory, and let  $\mathbb{U}$ be a monster model. Let $\kappa=\kappa_r(T)$, i.e. $\kappa=\kappa(T)^+$ if $\kappa(T)$ is singular or $\kappa (T)$ if not (for the sake of the following, one can also take $\kappa=|T|^+$) and let $\mu=\mu^{<\kappa}$ be a cardinal, with $\mu>\kappa$, such that $T$ is $\mu$-stable, e.g. if $\mu\geq 2^{|T|}$ (see \cite[Lemma III.3.6]{classification}), and thus there exists a saturated model of cardinality $\mu$ \cite[Theorem III.3.12]{classification}. Fix some partition $\mu=\cupdot_{i<\kappa} U_i$ to sets each of cardinality $\mu$.
From now on we also assume that $T$ is stationary.

\begin{definition}\label{D:ob}
Let $I$ be any set. We define $OB(I)$ to be the collection of triples
\[\textbf{a}:=\left( i_\Ba,\{U_j^\Ba\}_{j<i_\Ba}, B^\Ba\right)\] satisfying:
\begin{enumerate}
\item $i_\Ba\leq \kappa$;
\item $U_j^\Ba\subseteq U_j$ for all $j<i_\Ba$, and we set $U_{<j}^\Ba:=\bigcup_{k<j}U^\Ba_k$;
\item $B^\Ba=\langle b^\Ba_{\alpha,\eta}: \alpha\in U_{j}^\Ba, \eta\in I^{\U{j}},\, j<i_\Ba\rangle$ are such that:
\begin{enumerate}
\item $B^\Ba$ is with no repetitions;
\item $B^\Ba_j:=\{ b^\Ba_{\alpha,\eta}: \alpha\in U_j^\Ba,\eta\in I^{\U j}\}$ is independent over $B_{<j}^\Ba=\bigcup_{k<j}B_k^\Ba$, for every $j<i_\Ba$.

\end{enumerate}

For ease, we denote for $j<i_\Ba$, $W^\Ba_j:=\{(\alpha,\eta):\alpha\in U_j^\Ba, \eta\in I^{\U{j}}\}$, and likewise $W^\Ba_{<j}$.

\end{enumerate}

As usual, when $\Ba$ is clear from the context we omit it.
\end{definition}

Note that any permutation $\pi$ of the set $I$ induces a permutation $\widehat \pi$ of $I^{\U \gamma}$, for any $\gamma$.

\begin{definition}
Let $I$ be a set and $OB(I)$ as above.
\begin{enumerate}
\item  We say that $\Ba\in OB(I)$ is \emph{homogeneous} if for any permutation $\pi$ of $I$,
the set of pairs \[\pi[\Ba]:=\{(b^\Ba_{\alpha,\eta},b^\Ba_{\alpha,\widehat \pi(\eta)}):(\alpha,\eta)\in W_{<i_\Ba}^\Ba\}\] is an elementary embedding.

\item We say that $\Ba\in OB(I)$ is \emph{full} if for every $j<i_{\Ba}$ and $\eta\in I^{\U j}$,  a non-algebraic type $p$ over $B_{<j}^\Ba$ which does not fork over \[\{b^\Ba_{\alpha,\nu}\in B^\Ba_{<j}: \Rg(\nu)\subseteq \Rg(\eta)\}\] is realized by $b^\Ba_{\alpha,\eta}$ for some $\alpha\in U_j^\Ba$.
\end{enumerate}
\end{definition}

\begin{lemma}\label{L:use-fullness}
Let $I$ be any set and $\Ba\in OB(I)$ with $i_{\Ba}=\kappa$. For every $C\subseteq B^{\Ba}_{<\kappa}$ with $|C|<\kappa(T)\leq \kappa$ there exist some $j<\kappa$ and $\eta\in I^{\U{j}}$ satisfying
\[C\subseteq \{b_{\alpha,\nu}\in B^\Ba_{<j}:\Rg(\nu)\subseteq \Rg(\eta)\}.\]
\end{lemma}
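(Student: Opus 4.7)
The plan is a bookkeeping argument that uses only the regularity of $\kappa=\kappa_r(T)$ together with the hypothesis $|C|<\kappa(T)\leq \kappa$; no model-theoretic content is needed.

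First, by clause (3a) of Definition \ref{D:ob} (``$B^\Ba$ is with no repetitions''), every $b\in C$ admits a \emph{unique} index $(\alpha_b,\nu_b)\in W^\Ba_{<\kappa}$ such that $b=b^\Ba_{\alpha_b,\nu_b}$, and we let $j_b<\kappa$ denote the unique ordinal with $(\alpha_b,\nu_b)\in W^\Ba_{j_b}$, i.e.\ $\alpha_b\in U^\Ba_{j_b}$ and $\nu_b\in I^{\U{j_b}}$. Set
\[
\delta:=\sup_{b\in C}(j_b+1),\qquad S:=\bigcup_{b\in C}\Rg(\nu_b)\subseteq I.
\]

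The first main step is to show $\delta<\kappa$. The set $\{j_b+1:b\in C\}$ is a subset of $\kappa$ of size $\leq |C|<\kappa(T)\leq\kappa$, and $\kappa=\kappa_r(T)$ is a regular cardinal by construction; hence its supremum is strictly less than $\kappa$. The second step is the cardinality bound $|S|<\kappa$: each $\Rg(\nu_b)$ has cardinality $\leq |j_b|\leq |\delta|$, so $|S|\leq |C|\cdot |\delta|$, and since both factors are $<\kappa$ with $\kappa$ regular and infinite, we get $|S|<\kappa$.

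For the final step, choose any ordinal $j$ with $\delta\leq j<\kappa$ and $|S|\leq j\leq |I|$; the only nontrivial constraint is $j\leq |I|$, which holds because $|I|\geq |S|$ trivially and $|I|\geq \sup_b j_b$ since each $\nu_b\in I^{\U{j_b}}$ is injective (the negligible corner case $\delta=\sup_b j_b+1$ with $\delta>|I|$ can only occur when every $\nu_b$ enumerates all of $I$, in which case one either takes $j=\sup_b j_b$ if the sup is a limit, or recalls that in the applications $I$ is taken large). Now pick any injective $\eta:j\to I$ whose range contains $S$; such $\eta$ exists since $|S|\leq j\leq |I|$. Then $\eta\in I^{\U j}$, and for each $b\in C$ we have $j_b<j$ (so $b\in B^\Ba_{<j}$) and $\Rg(\nu_b)\subseteq S\subseteq \Rg(\eta)$, which is exactly the conclusion.

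The main (and only) obstacle is keeping straight the distinction between ordinal suprema and cardinalities in the two applications of regularity; everything else is unpacking Definition \ref{D:ob}.
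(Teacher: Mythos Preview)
Your proof is correct and matches the paper's approach: use regularity of $\kappa$ to bound the levels of the elements of $C$ by some $\tilde j<\kappa$ (your $\delta$), collect $J=\bigcup_{b\in C}\Rg(\nu_b)$ (your $S$), and take $j=\max\{\tilde j,|J|\}$ with $\eta\in I^{\U j}$ whose range covers $J$. The paper's version is terser (and writes $\Rg(\eta)=J$ where $\Rg(\eta)\supseteq J$ is meant), and like you it tacitly relies on $|I|$ being large enough to handle the corner case you flag --- which is guaranteed in every application, where $|I|\geq\mu>\kappa$.
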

\begin{proof}
Since $\kappa$ is regular, there exist $\tilde{j}<\kappa$ with $C\subseteq B^\Ba_{<\tilde{j}}$. Let $J=\bigcup_{b_{\alpha,\nu}\in C}\Rg(\nu)$.
Let $j=\max\{\tilde{j},|J|\}<\kappa$ and $\eta\in I^{\U j}$ with $\Rg(\eta)=J$.
\end{proof}

\begin{proposition}\label{P:object is saturated}
Let $I$ be any set with $|I|\geq \mu$. If $\Ba\in OB(I)$ is full and $i_\Ba=\kappa$ then $M:=\mathbb{U}\restriction \dcl(B_{<\kappa}^{\Ba})$ is a saturated elementary substructure of $\mathbb{U}$ of cardinality $|I|^{<\kappa}$.
\end{proposition}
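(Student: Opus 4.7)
The plan is three-fold: compute $|M|=|I|^{<\kappa}$, verify $M\preceq\mathbb{U}$ via Tarski--Vaught, and invoke Lemma \ref{L:lambda-saturation} to conclude saturation. The cardinality is immediate from $|B^\Ba_{<\kappa}|\leq\sum_{j<\kappa}\mu\cdot |I|^j=|I|^{<\kappa}$ (using $\mu\leq |I|$) together with $|\dcl(A)|\leq |A|+|T|$ for the upper bound; and from fullness (which forces at least one $b^\Ba_{\alpha,\eta}$ per $\eta\in I^{\U j}$ realizing some non-algebraic type over $B^\Ba_{<j}$), giving $|B^\Ba_j|\geq |I|^j$ for each $j<\kappa$, whence $|M|\geq |I|^{<\kappa}$, for the lower bound.

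For Tarski--Vaught, let $\varphi(x,\bar a)$ with $\bar a\in M$ be consistent with $\mathrm{Th}(\mathbb{U})$ and extend to a complete type $p\in S(\bar a)$. If $p$ is algebraic, its realizations lie in $\acl(\bar a)=\dcl(\bar a)\subseteq M$ by Lemma \ref{L:stationarity-implies-not-algebraicity}. Otherwise, pick a finite $C'\subseteq B^\Ba_{<\kappa}$ with $\bar a\in\dcl(C')$, and apply Lemma \ref{L:use-fullness} to find $j<\kappa$ and $\eta\in I^{\U j}$ with $C'\subseteq B_\eta:=\{b^\Ba_{\alpha,\nu}\in B^\Ba_{<j}:\Rg(\nu)\subseteq\Rg(\eta)\}$. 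Let $p^*\in S(B^\Ba_{<j})$ be the unique (by stationarity) non-forking extension of $p$; since $\bar a\subseteq\dcl(B_\eta)\subseteq\acl(B_\eta)$, the canonical base of $p^*$ lies in $\acl^{eq}(\bar a)\subseteq\acl^{eq}(B_\eta)$, so $p^*$ does not fork over $B_\eta$. Fullness then yields $\alpha\in U^\Ba_j$ with $b^\Ba_{\alpha,\eta}\models p^*$, providing a witness in $M$.

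For saturation we apply Lemma \ref{L:lambda-saturation} with $\lambda=|I|^{<\kappa}=|M|$ (note $\lambda\geq\mu>\kappa\geq\kappa(T)$). Let $q\in S(C)$ be non-algebraic with $C\subseteq M$ and $|C|<\kappa(T)$. Take $C'\subseteq B^\Ba_{<\kappa}$ with $|C'|<\kappa(T)$ and $C\subseteq\dcl(C')$, and use Lemma \ref{L:use-fullness} to choose $j<\kappa$ and $\eta\in I^{\U j}$ with $C'\subseteq B_\eta$. For every $j<j'<\kappa$ let $q_{j'}\in S(B^\Ba_{<j'})$ be the unique non-forking extension of $q$; for every $\eta''\in I^{\U{j'}}$ with $\Rg(\eta)\subseteq\Rg(\eta'')$, the same canonical base argument places the canonical base of $q_{j'}$ inside $\acl^{eq}(\{b^\Ba_{\alpha,\nu}\in B^\Ba_{<j'}:\Rg(\nu)\subseteq\Rg(\eta'')\})$, so fullness provides $b^\Ba_{\alpha(\eta''),\eta''}\models q_{j'}$. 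Let $R$ be the collection of all such realizations, ordered first by level. For $b=b^\Ba_{\alpha(\eta''),\eta''}$ at level $j'$ one has $b\forkindep_C B^\Ba_{<j'}$ (since $q_{j'}$ does not fork over $C$) and $b\forkindep_{B^\Ba_{<j'}}B^\Ba_{j'}\setminus\{b\}$ by Definition \ref{D:ob}(3)(b); transitivity of non-forking yields $b\forkindep_C R_{<b}$, so $R$ is a Morley sequence for $q$ over $C$. Its cardinality is $\sum_{j<j'<\kappa}|I|^{j'}=|I|^{<\kappa}=\lambda$, so Lemma \ref{L:lambda-saturation} gives that $M$ is $\lambda$-saturated, hence saturated.

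The central difficulty is that the independence built into the sequence $B^\Ba$ holds only over the large base $B^\Ba_{<j'}$, whereas saturation demands independence over the small base $C$. The upgrade uses stationarity (to identify the non-forking extension of $q$ at each level uniquely) and transitivity of non-forking, and one must carefully route the realizations through extensions $\eta''$ of $\eta$ at arbitrarily high levels $j<j'<\kappa$ in order to accumulate the required $|I|^{<\kappa}$ many of them.
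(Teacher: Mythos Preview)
Your proof is correct and follows the same three-step outline as the paper (cardinality, Tarski--Vaught via fullness, saturation via Lemma~\ref{L:lambda-saturation} applied to the family $\{b^\Ba_{\alpha_\nu,\nu}:\Rg(\eta)\subseteq\Rg(\nu)\}$, shown independent over $C$ by combining Definition~\ref{D:ob}(3)(b) with transitivity). The canonical-base detour in the Tarski--Vaught and saturation steps is harmless but unnecessary: once you replace $\bar a$ (respectively $C$) by a finite $C'\subseteq B^\Ba_{<\kappa}$ with $\bar a\subseteq\dcl(C')$, the non-forking extension of the type to $B^\Ba_{<j}$ already does not fork over $C'\subseteq B_\eta$ simply because the original type lives over $C'$, so no appeal to canonical bases is needed.
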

\begin{proof}
To show that it is an elementary substructure we use Tarski-Vaught. Let $\varphi(x,b)$ be a consistent formula with $b\in \dcl(B_{<\kappa}^{\Ba})$. There is no harm in assuming that $b\in B_{<\kappa}^{\Ba}$. If $\varphi(x,b)$ is algebraic then by Lemma \ref{L:stationarity-implies-not-algebraicity}, any realization is already in $\dcl (B_{<\kappa}^{\Bb})$. Otherwise, let $p$ be any non-algebraic complete type over $b$ containing $\varphi(x,b)$. Let $j<\kappa$ and $\eta\in I^{\U j}$ be given by Lemma \ref{L:use-fullness} for $C=\{b\}$. By stationarity, there is a unique non forking extension $\mathfrak{p}$ of $p$ to $B^\Ba_{<j}$, which is necessarily non-algebraic as well. By fullness, we may realize $\mathfrak{p}$ by some element $b_{\alpha,\eta}$ for $\alpha\in U_j^{\Ba}$. In particular $b_{\alpha,\eta}\in B_{<\kappa}^\Ba$ realizes $\varphi(x,b)$.

To show $|I|^{<\kappa}$-saturation we apply Lemma \ref{L:lambda-saturation} (recall $\mu>\kappa$). 
Let $q\in S(C)$ be a non-algebraic type with $C\subseteq \dcl(B^{\Ba}_{<\kappa})$ and $|C|<\kappa(T)$. There is no harm to take $C\subseteq B^{\Ba}_{<\kappa}$. Let $j<\kappa$ and $\eta\in I^{\U j}$ be as supplied by Lemma \ref{L:use-fullness} with respect to $C$.  Let 
\[\Delta:=\{\nu\in \bigcup_{j\leq k<\kappa} I^{\U k}: \Rg(\eta)\subseteq \Rg(\nu)\}.\]
By assumption of fullness, for every $\nu\in \Delta$ there is some $\alpha_\nu\in U^{\Ba}_{lg(\nu)}$, such that $b_{\alpha_\nu,\nu}\models q|B_{<lg(\nu)}^{\Ba}$, and so satisfies $q$ as well. The set of realizations of $q$, $\{ b_{a_\nu,\nu}:\nu \in \Delta\}$, is independent over $C$ by the definition of $OB(I)$. Indeed, by  Definition \ref{D:ob}(3b) we show by induction on $j\leq k$ that $\{b_{\alpha_\nu,\nu}:\nu\in \Delta,\,  lg(\nu)<k\}$ is independent over $C$. 

We note that $|\Delta|=|I|^{<\kappa}$ and thus by Lemma \ref{L:lambda-saturation}, $\dcl(B_{<\kappa}^\Ba)$ is $|I|^{<\kappa}$-saturated. Furthermore, as $|I|\geq \mu$, it follows that $|I|^{<\kappa}=|B^{\Ba}_{<\kappa}|=|\dcl(B^{\Ba}_{<\kappa})|$ and so $\dcl(B^{\Ba}_{<\kappa})$ is saturated.
\end{proof}

We define the following partial order on the elements of $OB(I)$.
\begin{definition}
Let $I$ be a set. For $\Ba,\Bb\in OB(I)$ we say that $\Ba\leq \Bb$ if 
\begin{enumerate}
\item $i_\Ba\leq i_\Bb$;
\item for $j<i_\Ba$ we have $U_j^\Ba=U_j^\Bb$ and $b_{\alpha,\eta}^\Ba=b_{\alpha,\eta}^\Bb$, for $(\alpha,\eta)\in W_j^\Ba$.
\end{enumerate}
\end{definition}

\begin{proposition}\label{P:existence of object}
Let $I$ be a set with $|I|\geq\kappa$. Then there exists a full homogeneous $\Ba\in OB(I)$ with $i_\Ba=\kappa$.
\end{proposition}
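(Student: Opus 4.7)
The plan is to construct $\Ba$ by transfinite recursion on $j \leq \kappa$, producing at each stage a homogeneous object $\Ba_j \in OB(I)$ with $i_{\Ba_j} = j$ satisfying the fullness condition at every previous level $k < j$. The base case $j = 0$ is the trivial empty object, and at limit stages I would take the obvious union of the previously constructed sequence, whose verification of the $OB(I)$ axioms, homogeneity, and prior fullness is routine. The heart of the argument is the successor step from $j$ to $j+1$, where the task is to adjoin a level-$j$ family $\{b_{\alpha, \eta} : \alpha \in U_j^{\Ba_{j+1}},\ \eta \in I^{\U j}\}$ that is independent over $B^{\Ba_j}_{<j}$, realizes the types demanded by fullness, and respects the $\mathrm{Sym}(I)$-action underlying homogeneity.

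For the successor step I would first fix a reference tuple $\eta_0 \in I^{\U j}$ (available since $j < \kappa \leq |I|$) and set $C_0 = \{b_{\alpha, \nu} \in B^{\Ba_j}_{<j} : \Rg(\nu) \subseteq \Rg(\eta_0)\}$. A routine estimate using $\mu = \mu^{<\kappa}$ gives $|C_0| \leq \mu$, so by $\mu$-stability and stationarity the non-algebraic complete $1$-types over $B^{\Ba_j}_{<j}$ that do not fork over $C_0$ biject with the non-algebraic $1$-types over $C_0$, and there are at most $\mu$ of them; enumerate these as $\{p_\alpha : \alpha \in U_j^{\Ba_{j+1}}\}$ for some $U_j^{\Ba_{j+1}} \subseteq U_j$.

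The key observation is that each $p_\alpha$ is invariant under the homogeneity-induced action of the stabilizer $\mathrm{Stab}(\eta_0) = \{\sigma \in \mathrm{Sym}(I) : \widehat\sigma(\eta_0) = \eta_0\}$. Indeed, any such $\sigma$ fixes $\Rg(\eta_0)$ pointwise, so the elementary map $\phi_\sigma$ on $B^{\Ba_j}_{<j}$ given by $b_{\beta, \nu} \mapsto b_{\beta, \widehat\sigma(\nu)}$ fixes $C_0$ pointwise; it therefore fixes $p_\alpha | C_0$, and by stationarity it fixes the unique non-forking extension $p_\alpha$ itself. Hence for each $\eta \in I^{\U j}$ the type $q_\alpha^\eta := \phi_{\pi_\eta}(p_\alpha)$ is well-defined independently of the choice of $\pi_\eta \in \mathrm{Sym}(I)$ with $\widehat\pi_\eta(\eta_0) = \eta$, and the resulting assignment $(\alpha,\eta) \mapsto q_\alpha^\eta$ is $\mathrm{Sym}(I)$-equivariant, in the sense that $\phi_\pi(q_\alpha^\eta) = q_\alpha^{\widehat\pi(\eta)}$ for every $\pi \in \mathrm{Sym}(I)$.

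The final step is to realize the family $(q_\alpha^\eta)$ as an independent family over $B^{\Ba_j}_{<j}$ inside the monster (standard, by Morley sequences and saturation), producing the desired elements $b_{\alpha, \eta}$; set $\Ba_{j+1}$ to be the extension of $\Ba_j$ by this family. Non-repetition and independence over $B^{\Ba_j}_{<j}$ are immediate from the construction, while non-algebraicity of each $q_\alpha^\eta$ combined with Lemma \ref{L:stationarity-implies-not-algebraicity} guarantees that the new elements lie outside $B^{\Ba_j}_{<j}$. Fullness at level $j$ is built in by the enumeration. The main obstacle I foresee is verifying homogeneity of $\Ba_{j+1}$: the plan is to exploit the fact that, by stationarity, the joint type of the independent family $(b_{\alpha, \eta})$ over $B^{\Ba_j}_{<j}$ is determined by the individual types $q_\alpha^\eta$ and the independence structure, so the $\mathrm{Sym}(I)$-equivariance of the type assignment upgrades to equivariance of the joint type. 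The reduction via stationarity to $\mathrm{Stab}(\eta_0)$-invariance of a single reference type is precisely where the stationarity hypothesis is indispensable.
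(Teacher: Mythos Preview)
Your proposal is correct and follows essentially the same approach as the paper: a transfinite recursion whose successor step uses the $\mathrm{Sym}(I)$-action together with stationarity to define a coherent family of types at level $j$, realizes them as an independent set over $B^{\Ba_j}_{<j}$, and verifies homogeneity via stationarity and independence. The only cosmetic difference is bookkeeping: the paper indexes level-$j$ types by equivalence classes of triples $(p,C,\eta)$ under the $\mathrm{Sym}(I)$-action, whereas you fix a single reference $\eta_0$, enumerate the relevant types over $C_0$, and transport; these are equivalent organizations of the same idea.
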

\begin{proof}
We choose full homogeneous $\Ba_j\in OB(I)$ by induction on $j\leq \kappa$ such that $\Ba_j\in OB(I)$ with $i_{\Ba_j}=j$ and such that $k_1\leq k_2<j$ implies $\Ba_{k_1}\leq \Ba_{k_2}$.

For $j=0$ choose $\Ba_0=(0,\emptyset,\emptyset)$ and note that the conditions hold trivially.

Let $j\leq \kappa$ be a limit ordinal, set $i_{\Ba_{j}}=j$, $U_k^{\Ba_j}:=U_k^{\Ba_{k+1}}$ for $k<j$ and $B^{\Ba_j}:=\bigcup_{k<j}B^{\Ba_k}$, $\Ba_j$ has the desired properties.

Let $j<\kappa$ and assume that $\Ba:=\Ba_j$ is full and homogeneous with $i_{\Ba_j}=j$. We will construct a full homogeneous $\Bb\in OB(I)$ with $i_\Bb=j+1$ such that $\Ba\leq \Bb$. Then we can set $\Ba_{j+1}:= \Bb$.

Let \[\mathcal{P}:=\{(p,C,\eta): p\in S(C) \text{ non-algebraic}, \eta\in I^{\U j},\]\[ C\subseteq \{b_{\alpha,\nu}: b_{\alpha,\nu}\in B_{<j}^{\Ba}, \Rg(\nu)\subseteq \Rg(\eta)\} \text{ of cardinality $< \kappa$}\}.\] 
We define the following equivalence relation $\mathcal{P}$:
\[(p_1,C_1,\eta_1)\E(p_2,C_2,\eta_2)\]
if and only if for some permutation $\pi$ of $I$ 
\[\pi\langle \Ba\rangle(p_1,C_1,\eta_1)=(p_2,C_2,\eta_2),\]
which means $\widehat \pi(\eta_1)=\eta_2$, $\pi[\Ba]$ (as an elementary mapping) maps $C_1$ onto $C_2$ and $p_1$ onto $p_2$. Since $|I|\geq \kappa>j$, for every $\eta_1,\eta_2\in I^{\U j}$ there is a permutation $\pi$ of $I$ mapping $\eta_1$ to $\eta_2$. As $\pi\langle \Ba\rangle$ is a permutation of $\mathcal{P}$, $E$ has at most $\mu$ equivalence classes (because $\mu^{<\kappa}=\mu$).

Since $|U_j|=\mu$ we may find some $U_j^\prime\subseteq U_j$ that enumerates the different classes, i.e let
$\langle X_\alpha:\alpha\in U_j^\prime\subseteq U_j\rangle$ list  $\mathcal{P}/\E$. Since $\pi\langle \Ba\rangle$ is a permutation of $\mathcal{P}$, $\pi\langle \Ba\rangle\restriction X_\alpha$, for $\alpha\in U_j^\prime$, is also a permutation.

For any $\eta\in I^{\U j}$ and $\alpha\in U_j^\prime$ there exists a unique $(p,C)$ such that $(p,C,\eta)\in X_\alpha$. To show uniqueness, note that if $\widehat \pi$ fixes $\eta$ then $\pi[\Ba]$ fixes $C$. For existence, let $\nu\in I^{\U j}$, $p$ and $C$ be such that $(p,C,\nu)\in X_\alpha$. Since $|I|>j$ we may find a permutation $\pi$ of $I$ mapping $\nu$ to $\eta$. Now note that  $(\pi[\Ba](p), \pi[\Ba](C), \widehat \pi (\nu))\in X_\alpha$. 

For any $\eta\in I^{\U j}$ and $\alpha\in U_j^\prime$ we name the unique pair by $(p_{\alpha,\eta},C_{\alpha,\eta})$. By fixing some well order on $\{(\alpha,\eta): \alpha\in U_j^\prime, \eta\in I^{\U j}\}$, we may inductively find $\langle b_{\alpha,\eta}:\alpha\in U_j^\prime, \eta\in I^{\U j}\rangle$ such that $b_{\alpha,\eta}$ realizes the unique non-forking extension of $p_{\alpha,\eta}$ to $B^{\Ba}_{<j}\cup \{b_{\alpha^\prime,\eta^\prime}: (\alpha^\prime,\eta^\prime)<(\alpha,\eta)\}$. In particular $\tp(b_{\alpha,\eta}/B^{\Ba}_{<j})$ does not fork over $C_{\alpha,\eta}$ and $\langle b_{\alpha,\eta}:\alpha\in U_j^\prime, \eta\in I^{\U j}\rangle$ is independent over $B^{\Ba}_{<j}$. 

We may now define $\Bb\in OB(I)$ by $i_\Bb=j+1$, $\Ba\leq \Bb$, $U^\Bb_j=U^\prime_j$ and $b_{\alpha,\eta}^\Bb=b_{\alpha,\eta}$ for $\alpha\in U_j^\prime, \eta\in I^{\U j}$. Note that since the types $p_{\alpha,\eta}$ are not algebraic, it follows that $B^{\Bb}$ is without repetitions. It remains to check that $\Bb$ is full and homogeneous.

$\Bb$ is full: by induction hypothesis, we consider $\eta\in I^{\U j}$ and a non-algebraic type $p$ over $B_{<j}^\Bb$ which does not fork over
\[\{b_{\alpha,\nu}\in B^{\Bb}_{<j}: \Rg(\nu)\subseteq \Rg(\eta)\}.\]
Let $C$ be a subset with $|C|<\kappa$ such that $p$ does not fork over $C$. Consider the triple $(p|C,C,\eta)$ and let $\alpha\in U_j^\Bb$ be such that $(p|C,C,\eta)\in X_\alpha$. By uniquness, $(p|C,C)=(p_{\alpha,\eta},C_{\alpha,\eta})$ (in the above notation), and thus $b_{\alpha,\eta}^\Bb$ satisfies the unique non-forking extension of $p|C$ to $B_{<j}^\Bb$ which is equal to $p$.

$\Bb$ is homogeneous: let $\pi$ be a permutation of $I$. 
To show that $\pi[\Bb]$ is elementary, we show by induction on $k\geq 0$ that for $(\alpha_1,\eta_1),\dots, (\alpha_k,\eta_k)\in W_{j}^{\Bb}$
\[b_{\alpha_1,\eta_1}\dots b_{\alpha_k,\eta_k}B^{\Bb}_{<j}\equiv b_{\alpha_1,\widehat\pi(\eta_1)}\dots b_{\alpha_k,\widehat\pi(\eta_k)}\pi[\Bb](B^{\Bb}_{<j}).\]
For $k=0$, it follows since $\pi[\Ba]$ is elementary and since $B^{\Bb}_{<j}=B^{\Ba}_{<j}$.

For $k=1$: since $\pi[\Bb]$ is an elementary map mapping $p_{\alpha,\eta}$ onto $p_{\alpha,\widehat \pi(\eta)}$, by stationarity it also maps $p_{\alpha,\eta}|B^{\Bb}_{<j}$ onto $p_{\alpha,\widehat \pi(\eta)}|B^{\Bb}_{<j}$ and so $b_{\alpha,\eta}B^{\Bb}_{<j}\equiv b_{\alpha,\widehat \pi(\eta)}\pi[\Bb](B^{\Bb}_{<j})$. 

The induction step: let $(\alpha_1,\eta_1),\dots, (\alpha_k,\eta_k)\in W_{j}^{\Bb}$ with $k\geq 2$. By the induction hypothesis \[b_{\alpha_1,\eta_1}\dots b_{\alpha_{k-1},\eta_{k-1}}B^{\Bb}_{<j}\equiv b_{\alpha_1,\widehat\pi(\eta_1)}\dots b_{\alpha_{k-1},\widehat\pi(\eta_{k-1})}\pi[\Bb](B^{\Bb}_{<j})\] and  
$b_{\alpha_k,\eta_k}B^{\Bb}_{<j}\equiv b_{\alpha_k,\widehat \pi (\eta_k)}\pi[\Bb](B^{\Bb}_{<j}).$

Moreover, since the elements are independent, \[b_{\alpha_k,\eta_k}\forkindep[B^{\Bb}_{<j}] b_{\alpha_1,\eta_1}\dots b_{\alpha_{k-1},\eta_{k-2}} \text{ and }\]
\[b_{\alpha_k,\widehat \pi(\eta_k)}\forkindep[{\pi[\Bb](B^{\Bb}_{<j})}] b_{\alpha_1,\widehat\pi(\eta_1)}\dots b_{\alpha_{k-1},\widehat\pi(\eta_{k-2})}\] and consequently by stationarity (see also Lemma \ref{L:stationarity} below)
\[b_{\alpha_1,\eta_1}\dots b_{\alpha_k,\eta_k}B^{\Bb}_{<j}\equiv b_{\alpha_1,\widehat\pi(\eta_1)}\dots b_{\alpha_k,\widehat\pi(\eta_k)}\pi[\Bb](B^{\Bb}_{<j}).\]
As required.
\end{proof}

\begin{theorem}\label{T:existence-of skel}
Let $T$ be a complete stationary stable theory. Let $\kappa=\kappa_r(T)$ and $M\models T$ a saturated model of cardinality $\geq \mu=\mu^{<\kappa}$ such that $\mu\geq 2^{|T|}$ (so $T$ is $\mu$-stable) and $\kappa<\mu$. Let $I$ be any set such that $|I|^{<\kappa}=|M|$.

Then there exists a skeletal homogeneous representation of $M$ in $\mathcal{M}_{\mu,\kappa}(I)$. In fact, the representation will be in $\mathcal{M}_{\mu,\kappa,1}(I)$.
\end{theorem}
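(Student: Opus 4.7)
The plan is to construct the representation directly from the combinatorial object built in the previous propositions. Applying Proposition~\ref{P:existence of object} (after increasing $|I|$ to $\mu$ if needed, since $|I|^{<\kappa}=|M|\geq\mu$ forces $|I|\geq\mu$ except possibly when $|M|=\mu$), one obtains a full homogeneous $\Ba\in OB(I)$ with $i_\Ba=\kappa$. Proposition~\ref{P:object is saturated} then gives that $M':=\mathbb{U}\restriction\dcl(B^\Ba_{<\kappa})$ is a saturated elementary substructure of $\mathbb{U}$ of cardinality $|I|^{<\kappa}=|M|$. Since saturated models of a complete theory of the same cardinality are isomorphic, we identify $M$ with $M'$, so that $M=\dcl(B^\Ba_{<\kappa})$.

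The natural candidate for $\Phi$ is to encode each basis element $b^\Ba_{\alpha,\eta}$ by the formal depth-one term it is indexed by: define
\[
\Phi\colon B^\Ba_{<\kappa}\to \mathcal{M}_{\mu,\kappa,1}(I),\qquad \Phi(b^\Ba_{\alpha,\eta}):=F_{\alpha,|\eta|}(\eta),
\]
for $(\alpha,\eta)\in W^\Ba_{<\kappa}$. Since $\alpha<\mu$ and $|\eta|<\kappa$, the term $F_{\alpha,|\eta|}(\eta)$ indeed lies in $\mathcal{M}_{\mu,\kappa,1}(I)$. The map is injective because $B^\Ba$ has no repetitions and distinct $(\alpha,\eta)$ yield syntactically distinct formal terms. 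Furthermore $\dcl(\dom\Phi)=\dcl(B^\Ba_{<\kappa})=M$ by construction.

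What remains is to check the two conditions of Definition~\ref{D:homog-repres}. For (1), observe that the image of $\Phi$ lies entirely in the depth-one layer, so if $t(\bar a)\in\Img(\Phi)$ with $\bar a\in I^{\U\beta}$, then necessarily $t=F_{\alpha,\beta}$ for some $\alpha\in U^\Ba_\beta$; but then for any other $\bar b\in I^{\U\beta}$, the pair $(\alpha,\bar b)$ belongs to $W^\Ba_\beta$, so $\Phi(b^\Ba_{\alpha,\bar b})=F_{\alpha,\beta}(\bar b)=t(\bar b)\in\Img(\Phi)$. For (2), suppose $\bar a=(a_1,\dots,a_n)$ and $\bar b=(b_1,\dots,b_n)$ are finite tuples in $\dom\Phi$, with $a_i=b^\Ba_{\alpha_i,\eta_i}$ and $b_i=b^\Ba_{\alpha'_i,\eta'_i}$, and $\pi$ a permutation of $I$ with $\widehat\pi(\Phi(a_i))=\Phi(b_i)$ for every $i$. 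Unwinding, $\widehat\pi(F_{\alpha_i,|\eta_i|}(\eta_i))=F_{\alpha_i,|\eta_i|}(\widehat\pi(\eta_i))$, which equals $F_{\alpha'_i,|\eta'_i|}(\eta'_i)$ only if $\alpha_i=\alpha'_i$ and $\widehat\pi(\eta_i)=\eta'_i$. But homogeneity of $\Ba$ asserts exactly that the associated set of pairs $\pi[\Ba]$ is an elementary map, and in particular it sends $a_i=b^\Ba_{\alpha_i,\eta_i}$ to $b^\Ba_{\alpha_i,\widehat\pi(\eta_i)}=b^\Ba_{\alpha'_i,\eta'_i}=b_i$; hence $\tp^M(\bar a)=\tp^M(\bar b)$, as required.

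Essentially there is no new obstacle here: all the technical work has already been invested into Propositions~\ref{P:existence of object} and~\ref{P:object is saturated}, where the key subtlety was building in the action of permutations of $I$ on the basis. The point of this final step is just to recognize that this built-in action matches exactly the notion of homogeneity in Definition~\ref{D:homog-repres}, once each $b^\Ba_{\alpha,\eta}$ is encoded as $F_{\alpha,|\eta|}(\eta)$; the only place where one must be a little careful is in handling the case $|I|<\mu$, resolved by passing to a supersized index set of cardinality $\mu$ when necessary.
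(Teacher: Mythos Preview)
Your proof is correct and follows essentially the same approach as the paper: apply Proposition~\ref{P:existence of object} to obtain a full homogeneous $\Ba\in OB(I)$, use Proposition~\ref{P:object is saturated} and uniqueness of saturated models to identify $M$ with $\dcl(B^\Ba_{<\kappa})$, and then define $\Phi(b^\Ba_{\alpha,\eta})=F_{\alpha,|\eta|}(\eta)$, checking injectivity and the two conditions of Definition~\ref{D:homog-repres} exactly as you do. One small quibble: your suggestion to ``pass to a supersized index set'' when $|I|<\mu$ is not quite legitimate, since the theorem fixes $I$ and asks for a representation in $\mathcal{M}_{\mu,\kappa}(I)$; the paper simply applies the propositions without commenting on this hypothesis, and in the application (Corollary~\ref{C:stationarystable}) $|I|\geq\mu$ is arranged explicitly.
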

\begin{proof}
Let $I$ be any set such that $|I|^{<\kappa}=|M|\geq \mu$ and let $\Ba\in OB(I)$  be a full homogeneous object with $i_\Ba =\kappa$ as supplied by Proposition \ref{P:existence of object}. By Proposition \ref{P:object is saturated}, $\dcl(B_{<\kappa}^\Ba)\models T$ is saturated of cardinality $|I|^{<\kappa}=|M|$. In particular $M$ is isomorphic to $\dcl(B_{<\kappa}^\Ba)$. Without loss of generality we assume $M=\dcl(B_{<\kappa}^\Ba)$. 
We define a function
\[\Phi:B_{<\kappa}^\Ba\to \mathcal{M}_{\mu,\kappa,1}(I)\]
by $\Phi(b_{\alpha,\eta})=F_{\alpha,j}(\eta)$ for the unique $j<\kappa$ such that $\eta\in I^{\U j}$ and $\alpha\in U_j^{\Ba}$.

The map $\Phi$ is injective. If $F_{\alpha,j}(\eta)=F_{\beta,k}(\nu)$ then, since it is a free algebra, $\alpha=\beta, j=k$ and $\eta=\nu$ so $b_{\alpha,\eta}=b_{\beta,\nu}$.

The map $\Phi$ is a homogeneous representation. Indeed, to show condition (1), let $t(\bar x)$ be any term with $|\bar x|=\beta<\kappa$ and let $\bar a\in I^{\U \beta}$ with $t(\bar a)\in \Img (\Phi)$. Thus there exist $j<\kappa$ and $\alpha\in U_j^{\Ba}$ such that $t(\bar a)=F_{\alpha,j}(\eta_{\bar a})$ for some $\eta_{\bar a}\in I^{\U j}$. Thus for any $\bar b\in I^{\U \beta}$ there is some $\eta_{\bar b}\in I^{\U j}$ with $t(\bar b)=F_{\alpha,j}(\eta_{\bar b})$. In particular, $t(\bar b)\in \Img(\Phi)$, as needed.

For condition (2), let $b_{\alpha_1,\eta_1}\dots b_{\alpha_k,\eta_k}\in (B_{<\kappa}^{\Ba})^{\U k}$ and let $\pi$ be a permutation of $I$. Since $\Ba$ is homogeneous $\tp(b_{\alpha_1,\eta_1}\dots b_{\alpha_k,\eta_k})=\tp(b_{\alpha_1,\widehat\pi(\eta_1)}\dots b_{\alpha_k,\widehat\pi(\eta_k)})$, as needed. 
\end{proof}

\begin{corollary}\label{C:stationarystable}
Let $G=(V,E)$ be a graph that is interpretable (possibly with paramters) in a stationary stable structure. If $\chi(G)> \beth_2(\aleph_0) $ then there exists an $n\in\mathbb{N}$ such that $G$ contains all finite subgraphs of $\Sh_n(\omega)$.
\end{corollary}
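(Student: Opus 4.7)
The plan is to reduce to the setting of Theorem \ref{T:existence-of skel} and Proposition \ref{P:embed-shift-represen}. Let $N$ denote the stationary stable structure in which $G$ is interpretable. The first step is to arrange that $G$ be $\emptyset$-interpretable in a \emph{countable} stationary stable structure. Expanding the language by constants for the finitely many parameters used in the interpretation preserves stability and stationarity, since non-forking extensions — and hence the stationarity of any type — are not affected by naming parameters. Then Proposition \ref{P:"reduct" is stationary}, applied to the finite sublanguage containing the symbols of the interpreting formulas, produces a countable sublanguage $\mathcal{L}' \subseteq \mathcal{L}$ for which $\mathrm{Th}(N) \restriction \mathcal{L}'$ is still stationary (and stable, being a reduct of a stable theory). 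Replacing $N$ by its $\mathcal{L}'$-reduct, we may assume $T := \mathrm{Th}(N)$ is countable stationary stable and $G$ is $\emptyset$-interpretable in $N$.

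Next, I pass to a saturated elementary extension. Set $\kappa := \kappa_r(T)$, so $\kappa \leq \aleph_1$ and thus $\reg(\kappa) \leq \aleph_1$. Fix $\mu$ with $\mu = \mu^{<\kappa}$, $\mu \geq 2^{|T|} = 2^{\aleph_0}$ and $\mu > \kappa$; for concreteness $\mu = 2^{\aleph_1}$ will do. Let $I$ be a set with $|I| = |I|^{<\kappa} \geq \max(|N|,\mu)$ and let $M \succeq N$ be a saturated model of cardinality $|I|$, which exists since $T$ is stable. Because $G$ is $\emptyset$-interpretable and $N \prec M$, $G^N$ is an induced subgraph of $G^M$, so $\chi(G^M) \geq \chi(G^N) > \beth_2(\aleph_0)$.

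Theorem \ref{T:existence-of skel} now supplies a skeletal homogeneous representation $\Phi:\dom(\Phi) \to \mathcal{M}_{\mu,\kappa}(I)$ of $M$. To apply Proposition \ref{P:embed-shift-represen} with $\kap := \beth_2(\aleph_0)^+$ (a regular cardinal), I verify the three hypotheses, using $\reg(\kappa) \leq \aleph_1$: (1) $(\reg(\kappa) + \mu)^{<\reg(\kappa)} \leq \mu^{\aleph_0} = \mu \leq \beth_2(\aleph_0) < \kap$; (2) $\beth_2(\lambda) \leq \beth_2(\aleph_0) < \kap$ for every $\lambda < \reg(\kappa)$; (3) $2^{<\reg(\kappa)} \leq 2^{\aleph_0} \leq |I|$; and $\chi(G^M) \geq \kap$. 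The proposition yields some $n \in \mathbb{N}$ such that $G^M$ contains every finite subgraph of $\Sh_n(\omega)$. Finally, for any fixed finite subgraph $H$, the statement ``$G$ contains $H$ as a subgraph'' translates to an existential first-order sentence in the formulas interpreting $V$ and $E$, so by $N \prec M$ it descends from $G^M$ to $G^N = G$.

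The one delicate point in this plan is the very first reduction: Theorem \ref{T:existence-of skel} needs stationarity and a countable language \emph{simultaneously}, and since stationarity is not automatically preserved by passing to a reduct, Proposition \ref{P:"reduct" is stationary} is essential. Everything after that step is cardinal-arithmetic bookkeeping, which works out because $T$ is countable stable stationary, forcing $\reg(\kappa_r(T)) \leq \aleph_1$ and making $\beth_2(\aleph_0)$ just barely large enough to swallow conditions (1)–(3) of Proposition \ref{P:embed-shift-represen}.
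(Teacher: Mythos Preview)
Your proposal is correct and follows essentially the same approach as the paper: name parameters, invoke Proposition \ref{P:"reduct" is stationary} to pass to a countable stationary stable reduct, take a saturated elementary extension, build the skeletal homogeneous representation via Theorem \ref{T:existence-of skel}, and finish with Proposition \ref{P:embed-shift-represen}. The only cosmetic difference is the choice of $\mu$: the paper takes $\mu = 2^{\aleph_0}$ while you take $\mu = 2^{\aleph_1}$; your choice is slightly less economical but makes the requirement $\mu > \kappa$ transparently satisfied regardless of CH.
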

\begin{proof}
Assume $G$ is interpretable in a stationary stable structure $N$ over some finite set of a parameters $A\subseteq N$ and let $T=Th(N)$. Since adding constants to the language preserves stationarity, we may assume that $G$ is interpretable in $N$ over $\emptyset$. Since the interpretation only uses a finite fragment of the language, by applying Proposition \ref{P:"reduct" is stationary} we may assume that $|T|=\aleph_0$. 

Let $\mu=2^{\aleph_0}$ and $\kappa=\kappa(T)$. Note that $\kappa(T)\leq \aleph_1$ (\cite[Corollary III.3.3]{classification}) which implies $\kappa(T)=\kappa_r(T)$ and $\mu^{<\kappa}=\mu$. Let $I$ be any set satisfying $|I|\geq \max\{\mu,|N|\}$ (which implies $|I|^{<\kappa}\geq \max\{\mu,|N|\}$) and let $M\models T$ be a saturated elementary extension of $N$ of cardinality $|I|^{<\kappa}$ (exists by \cite[Lemma III.3.6 and Theorem III.3.12]{classification}). 

By Theorem \ref{T:existence-of skel} and Proposition \ref{P:embed-shift-represen}, there exists $n\in\mathbb{N}$ such that $(G(M),E(M))$, the realizations in $M$ of the interpretation of $G$, contains all finite subgraphs of $\Sh_n(\omega)$ and since $N\prec M$ the result follows. 
\end{proof}

A natural question is whether every stable structure is interpretable in a stationary stable structure. We thank Hrushovski for the following argument.

\begin{proposition}\label{P:stationary-Udi}
Let $T$ be a stationary stable theory. Then $T$ does not interpret an infinite $p$-root closed field, where $p$ is a prime number different from $\mathrm{char}(F)$. In particular, ACF is not interpretable in any stationary stable theory.
\end{proposition}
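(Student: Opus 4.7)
The plan is to assume for contradiction that $T$ (stationary stable) interprets an infinite $p$-root closed field $F$ with $\mathrm{char}(F)\neq p$, and then to produce a real formula that is almost over a tuple $\tilde a$ but not over $\tilde a$, contradicting Fact~\ref{F:stationarity}(2). Write the interpretation as $\pi\colon D\twoheadrightarrow F=D/E$ inside a sufficiently saturated $M\models T$. First I would reduce to the case $\mu_p\subseteq F$ (passing if necessary to a cyclotomic extension, which remains interpretable in $T$ and $p$-root closed), and fix a nonzero $a\in F$ together with a representative $\tilde a\in D$; then $x^p-a$ has $p$ distinct roots $b,\zeta b,\dots,\zeta^{p-1}b$ all lying in $F$.

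Next I would examine the $\tilde a$-definable preimage $Y=\{y\in D:\pi(y)^p=\pi(\tilde a)\}\subseteq M$, which is a disjoint union of exactly the $p$ $E$-classes $C_0,\dots,C_{p-1}$ corresponding to the $b_i$. Picking $\tilde b\in C_0$ representing $b_0$, the formula
\[\varphi(x)\equiv\bigl(xE\tilde b\wedge\pi(x)^p=\pi(\tilde a)\bigr)\]
defines $C_0$; because any $M$-automorphism fixing $\tilde a$ permutes the $C_i$, the formula $\varphi$ has at most $p$ conjugates over $\tilde a$ and is therefore almost over $\tilde a$. Fact~\ref{F:stationarity}(2) then forces $\varphi$ to be equivalent to a formula with parameters only in $\tilde a$, putting each $C_i$, and hence each $b_i$, in $\dcl^{M^{eq}}(\tilde a)$. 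A standard uniformity/compactness argument upgrades this pointwise definability to a $\emptyset$-definable partial function $\sigma\colon F^{*}\to F^{*}$ with $\sigma(a)^p=a$ for every nonzero $a\in F$, i.e.\ a $\emptyset$-definable section of the $p$-th power map.

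Finally I would contradict the existence of $\sigma$ by pure-field Galois theory: for a transcendental $a$, the extension $F(b)/F(a)$ is cyclic Galois of degree $p$ with Galois group generated by $b\mapsto\zeta b$, so the roots of $x^p-a$ form a single $\mathrm{Aut}(F/a)$-orbit and no $\emptyset$-definable function can canonically select one of them. The main obstacle is bridging this pure-field Galois symmetry with the automorphism group of $F\subseteq M^{eq}$ under its induced structure, which a priori could carry additional predicates rigidifying the roots; the decisive input is the stability of the interpretable group $F^{*}$ together with the free and transitive $\mu_p$-action on each root set, which shows that no $\emptyset$-definable section can survive inside $M^{eq}$. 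The preliminary reduction to $\mu_p\subseteq F$ is what keeps the Galois action nontrivial and thus the contradiction available.
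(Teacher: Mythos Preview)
Your approach through obtaining a definable $p$-th root selector parallels the paper's: both use Fact~\ref{F:stationarity}(2) to show that the finitely many root-classes over a parameter are individually definable over that parameter, then invoke compactness to get a uniform definable choice. (A minor point: your compactness step yields a function $\sigma\colon D\to F$ with $\sigma(\tilde a)^p=\pi(\tilde a)$, not a function on $F^*$ itself, since stationarity applies to home-sort parameters $\tilde a$, not to the imaginary $a$; this is harmless but worth tracking.)

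The genuine gap is the final contradiction. Your Galois-theoretic argument is about automorphisms of the pure subfield $k(a,b)$, and as you yourself note, such automorphisms need not extend to automorphisms of $M$; the induced structure on $F$ may rigidify the roots. Your proposed fix---``stability of $F^*$ together with the free transitive $\mu_p$-action''---is not an argument: stability alone does not forbid a definable section of $x\mapsto x^p$, and nothing you have written isolates what extra leverage stationarity gives at this stage.

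The paper's proof supplies exactly the missing idea, and it is genuinely different from a one-variable Galois picture. Instead of taking a $p$-th root of a single element, one takes a $p$-th root of the Lagrange resolvent $\sigma_{a_0,\dots,a_{p-1}}=\sum_{i<p}\pi(a_i)\zeta_p^{\,i}$, obtaining a definable $f\colon D^p\to F$ with $f(\bar a)^p=\sigma_{\bar a}$. The point of the resolvent is the identity $\zeta_p\,\sigma_{a_0,\dots,a_{p-1}}=\sigma_{a_{p-1},a_0,\dots,a_{p-2}}$: a cyclic shift of the arguments multiplies $\sigma$ by $\zeta_p$, hence multiplies $f$ by some $p$-th root $\omega$ of $\zeta_p$. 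Now take $(a_0,\dots,a_{p-1})$ a Morley sequence in a $\emptyset$-definable type on $D$. In a stable theory such a sequence is totally indiscernible, so all cyclic shifts realize the same type over $\acl(\emptyset)\ni\omega$; hence each successive shift contributes the same factor $\omega$, and after $p$ shifts one returns to $f(\bar a)$, giving $\omega^p f(\bar a)=f(\bar a)$, i.e.\ $\zeta_p=1$ unless $f(\bar a)=0$. The latter forces $\sigma_{\bar a}=0$, contradicting genericity. The contradiction thus comes not from field-internal Galois symmetry but from the indiscernibility of Morley sequences interacting with the resolvent's shift behaviour---this is the idea your outline is missing.
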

\begin{proof}
Let $M\models T$ and assume towards a contradiction that it interprets an infinite field $F$, that is $p$-root closed for $p\neq\mathrm{char}(F)$. 

For ease of writing, we assume that $F$ is a definable field in $M^{eq}$. So there exists a definable set $D\subseteq M^n$, for some $n<\omega$, and a definable (in $M^{eq}$) surjective map $\pi:D\to F$. 

Let $1\neq \zeta_p\in F$ be a $p$-th root of unity (such exists since $p\neq \mathrm{char}(F)$) and for every $a_0,\dots,a_{p-1}\in D$, let $\sigma_{a_0,\dots,a_{p-1}}=\Sigma_{i=0}^{p-1}\pi(a_i)\zeta_p^i$. Note that $\zeta_p\sigma_{a_0,\dots,a_{p-1}}=\sigma_{a_{p-1},a_0,\dots,a_{p-2}}$.

For any $a_0,\dots a_{p-1}\in D$, $(\pi(x)=\pi(y)\wedge \pi(x)^p=\sigma_{a_0,\dots,a_{p-1}})\vee (\pi(x)^p\neq \sigma_{a_0,\dots,a_{p-1}}\wedge \pi(y)^p\neq\sigma_{a_0,\dots,a_{p-1}} )$ is a definable finite equivalence relation on $D$ over $a_0,\dots,a_{p-1}$ (definable in $M$). By Fact \ref{F:stationarity} each of the equivalence classes are definable over $a_0,\dots,a_{p-1}$ (see also the first paragraph of the proof Proposition \ref{P:"reduct" is stationary}).

By compactness, there is a definable function $f:D^p\to F$ satisfying \[f(a_0,\dots,a_{p-1})^p=\sigma_{a_0,\dots,a_{p-1}}.\]

Let $\mathfrak{p}$ be a non-algebraic global type on $F$ (exists since $F$ is infinite), and since $\pi$ is surjective, we may find a global type $\mathfrak{q}$ on $D$ with $\pi_*\mathfrak{q}=\mathfrak{p}$. After naming parameters, we may assume that both $\mathfrak{p}$ and $\mathfrak{q}$ are $\emptyset$-definable. 

Let $(a_0,\dots,a_{p-1})\models \mathfrak{q}^{(p)}|\zeta_p$ (where $\mathfrak{q}^{(p)}=\mathfrak{q}\otimes \mathfrak{q}^{(p-1)}$). Since $\zeta_pf(a_0,\dots,a_{p-1})^p=f(a_{p-1},a_0,\dots,a_{p-2})^p$, then letting $\omega=\frac{f(a_{p-1},a_0,\dots,a_{p-2})}{f(a_0,\dots,a_{p-1})}\in F$ we have that $\omega^p=\zeta_p$. Since $(a_{\tau(0)},\dots,a_{\tau(p-1)})\models \mathfrak{q}^{(p)}|\zeta_p$, for any permutation $\tau$ on $\{0,\dots,p-1\}$, 
\[\omega f(a_0,\dots,a_{p-1})=f(a_{p-1},a_0,\dots,a_{p-2})\]
\[\omega f(a_{p-1},a_0,\dots,a_{p-2})=f(a_{p-2},a_{p-1},a_0,\dots,a_{p-3})\]
\[\vdots\]
\[\omega f(a_1,\dots,a_{p-1},a_0)=f(a_0,\dots, a_{p-1}).\]
Thus $f(a_0,\dots,a_{p-1})=\omega^pf(a_0,\dots,a_{p-1})=\zeta_pf(a_0,\dots,a_{p-1})$. This implies that $\sigma_{a_0,\dots,a_{p-1}}=0$, contradicting the non-algebraicity of $\mathfrak{p}$.
%
%
%
\end{proof}

\begin{remark}
Since every first order infinite structure (in a finite language) is bi-interpretable with a graph \cite[Theorem 5.5.1]{hodges}, it follows that there is a stable graph that is not interpretable in any stationary stable structure.

\end{remark}

\section{Quantitative Bounds}
The following section is joint work with Elad Levi.

The aim of this section is to prove that if $G=(V,E)$ is an $\omega$-stable graph with uncountable chromatic number and the $\Ur$-rank of $G$, $\Ur(G),$ is at most $2$ then it contains all finite subgraphs of $\Sh_n(\omega)$ for some $n\leq 2$. For the definition of $\Ur$-rank see \cite[Definition 8.6.1]{TZ}. Throughout, we will use Lascar's equality when the $\Ur$-rank is finite, see \cite[Exercise 8.6.5]{TZ}.

For certain parts of the argument we will need the following assumption.
\begin{context*}\label{H:hyp}
$G$ is a saturated $\omega$-stable structure, with home sort $V$, that eliminates imaginaries in a countable language with $\acl(\emptyset)=\dcl(\emptyset)$. Let $E\subseteq V^2$ be an $\emptyset$-type-definable set. Let $p\in S_1(\emptyset)$ be a non-algebraic and of finite $\Ur$-rank such that $G_p=(p(G),E\restriction p(G))$ is a graph with $\chi(G_p)\geq \aleph_1$.
\end{context*}
%
Note that assumption $\diamondsuit$ implies that every type over $\emptyset$ is stationary.

Assume $\diamondsuit$ and let $E_{alg}=\{(a,b)\in E:a\in \acl(b)\wedge b\in\acl(a)\}$ be the set of interalgebraic pairs belonging to $E$. Note that if $\Ur (a)=\Ur(b)$ then $a\in\acl(b)$ if and only if $b\in \acl(a)$. Indeed, by Lascar's equality \[\Ur(a/b)+\Ur(b)=\Ur(ab)=\Ur(a)+\Ur(b/a)\]
and for any type $q$, $\Ur(q)=0$ if and only if it is algebraic, see \cite[Exercise 8.6.1]{TZ}. 
Let $E_{nalg}=E\setminus E_{alg}$, it is definable by a countable type. 
%

\begin{lemma}\label{L:must-be-U2}
Assume $\diamondsuit$.
\begin{enumerate}
\item $\chi(p(G),E_{nalg})\geq \aleph_1$;
\item If there exist $a,b\in G_p$ with $a\E b$ and $a\forkindep b$ then any Morley sequence based on $p$ forms an infinite complete graph;
\item If $\Ur(p)=1$ then there exists a Morley sequence based on $p$ which forms an infinite complete graph
\end{enumerate}
\end{lemma}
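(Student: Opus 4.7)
The three parts are tightly linked, and the whole argument rests on controlling the ``algebraic'' and ``non-algebraic'' portions of the edge relation separately.

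For $(1)$, the strategy is to bound $\chi(p(G),E_{alg})$ by $\aleph_0$ and conclude via Lemma~\ref{L:basic-prop-chi}(2). Interalgebraicity is an equivalence relation on $p(G)$, and by definition every $E_{alg}$-edge lies inside a single equivalence class. Since the language is countable and $G$ is $\omega$-stable, $\acl(a)$ is countable for every $a\in V$, hence each interalgebraicity class, and therefore each connected component of $(p(G),E_{alg})$, is countable and admits an $\aleph_0$-coloring; so $\chi(p(G),E_{alg})\leq\aleph_0$ by Lemma~\ref{L:basic-prop-chi}(1). As $E=E_{alg}\cup E_{nalg}$ with both pieces symmetric, Lemma~\ref{L:basic-prop-chi}(2) yields
\[
\aleph_1 \leq \chi(G_p) \leq \chi(p(G),E_{alg})\cdot\chi(p(G),E_{nalg}) \leq \aleph_0\cdot\chi(p(G),E_{nalg}),
\]
forcing $\chi(p(G),E_{nalg})\geq\aleph_1$.

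For $(2)$, the hypothesis $\acl(\emptyset)=\dcl(\emptyset)$ in a stable theory makes every type over $\emptyset$ stationary, in particular $p$. Hence any two realizations of $p$ that are independent over $\emptyset$ realize the unique complete $2$-type $p^{(2)}\in S_2(\emptyset)$ obtained by non-forking amalgamation. Given $a,b\models p$ with $a\E b$ and $a\forkindep b$, the type $p^{(2)}(x,y)$ therefore implies the (type-definable) relation $E(x,y)$. For any Morley sequence $(c_i)_{i<\omega}$ based on $p$ and all $i\neq j$, the pair $(c_i,c_j)$ is independent over $\emptyset$, so realizes $p^{(2)}$, and consequently $c_i\E c_j$; the sequence forms an infinite complete graph.

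For $(3)$, by $(1)$ the graph $(p(G),E_{nalg})$ has chromatic number at least $\aleph_1$, so it admits an edge: there exist $a,b\models p$ with $a\E b$ but $(a,b)\notin E_{alg}$. As noted in the paragraph preceding the lemma, when $\Ur(a)=\Ur(b)=\Ur(p)<\omega$ interalgebraicity is symmetric, so we may assume $a\notin\acl(b)$. When $\Ur(p)=1$, Lascar's equality forces $\Ur(a/b)\in\{0,1\}$; the value $0$ would give $a\in\acl(b)$, contradicting our choice, so $\Ur(a/b)=1=\Ur(a)$, i.e.\ $a\forkindep b$. Part $(2)$ then produces the required Morley sequence. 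The main (mild) obstacle is simply bookkeeping: verifying that $E_{alg}$ and $E_{nalg}$ are genuinely symmetric subgraphs of $E$ and that the countability of algebraic closures and stationarity of $\emptyset$-types are actually available from assumption $\diamondsuit$.
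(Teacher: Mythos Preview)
Your proof is correct and follows essentially the same approach as the paper: bound $\chi(p(G),E_{alg})$ by $\aleph_0$ via countability of interalgebraicity classes and apply Lemma~\ref{L:basic-prop-chi}(2) for (1); use stationarity of $p$ over $\emptyset$ for (2); and combine (1) with a $\Ur$-rank computation for (3). One small quibble: the reference to Lemma~\ref{L:basic-prop-chi}(1) in your argument for (1) is not quite apt (the sum there could be uncountable), but the intended fact---that a graph whose connected components each have chromatic number $\le\aleph_0$ itself has chromatic number $\le\aleph_0$---is elementary and is exactly what the paper uses.
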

\begin{proof}
$(1)$ By interalgebraicity, every connected component of $(p(G),E_{alg})$ is countable and consequently $\chi(p(G),E_{alg})\leq \aleph_0$. By Lemma \ref{L:basic-prop-chi}(2), $\chi(p(G),E_{nalg})\geq \aleph_1$.

$(2)$ Assume there exist $a,b\in G_p$ with $a\E b$ and $a\forkindep b$. Since every type over $\emptyset$ is stationary it follows that every Morley sequence based on $p$ forms an infinite complete graph.

$(3)$ Assume $\Ur(p)=1$. Since $\chi(p(G),E_{nalg})\geq \aleph_1$, there must exist some $a,b\in p(G)$ with $a\E_{nalg}b$. If $a\not\forkindep b$ then $\Ur(a/b)<\Ur(a)=1$, which implies that $a\in\acl(b)$ (so $b\in\acl(a))$), contradiction. Thus $a\forkindep b$ and we may use $(1)$.
\end{proof}
%
%

\begin{definition}
We say that a stationary type $\tp(a/A)$ is \emph{pseudo-one-based} if $\mathrm{Cb}(a/A)\subseteq \acl^{eq}(a)$.
\end{definition}
\begin{remark}
Compare with the last paragraph of page 105 in \cite{pillay}.
\end{remark}
We give some examples of pseudo-one-based types. 
%
\begin{lemma}\label{L:p-o-b}
\begin{enumerate}
\item In a one-based theory every stationary type (over any base) is pseudo-one-based.
\item Let $M$ be a stable structure. If $\Ur(a)=\Ur(b)=1$ and $a\not\forkindep b$ then $\tp(a/\acl^{eq}(b))$ is pseudo-one-based.
\item Let $M$ be a stable structure and $a,b\in M$ non interalgebraic, with $\Ur(a)=\Ur(b)=2$. Let $X$ and $Y$ be infinite mutually indiscernible sets with $a\in X$ and $b\in Y$. If $a\not\forkindep b$ then $\tp(a/\acl^{eq}(b))$ is pseudo-one-based.
\end{enumerate}
\end{lemma}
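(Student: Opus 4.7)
I would handle the three items separately, as they have rather different flavors.

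For (1), the plan is simply to invoke the standard equivalent formulation of one-basedness: in a one-based theory, for any set $A$ and any realization $a$ of a stationary type $p$ over $\acl^{eq}(A)$, one has $\mathrm{Cb}(p)\subseteq\acl^{eq}(a)$. This appears, for example, in \cite{pillay}; no additional work is needed.

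For (2), the plan is to first prove that $a$ and $b$ are interalgebraic. Since $\Ur(a)=1$ and $a\not\forkindep b$, Lascar's equality gives $\Ur(a/b)<\Ur(a)=1$, hence $\Ur(a/b)=0$, i.e. $a\in\acl(b)$. Symmetrically $b\in\acl(a)$, so $\acl^{eq}(a)=\acl^{eq}(b)$. In particular $\tp(a/\acl^{eq}(b))$ is algebraic, and its canonical base, which lies in $\acl^{eq}(b)=\acl^{eq}(a)$, is automatically in $\acl^{eq}(a)$.

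For (3), set $c:=\mathrm{Cb}(\tp(a/\acl^{eq}(b)))$, regarded as a finite imaginary contained in $\acl^{eq}(b)$ (finite because of the finite rank assumption). I would first compute ranks: Lascar's equality together with non-interalgebraicity of $a,b$ forces $\Ur(a/b)=\Ur(b/a)=1$; the canonical-base identity $\Ur(a/c)=\Ur(a/\acl^{eq}(b))=\Ur(a/b)=1$ then shows $a\not\forkindep c$ and rules out $\Ur(c)=0$ (which would give $\Ur(a/c)=\Ur(a)=2$). The case analysis on $\Ur(c)\in\{1,2\}$ is immediate when $\Ur(c)=1$: Lascar's equality applied to $\{a,c\}$ gives $\Ur(c/a)=\Ur(c)+\Ur(a/c)-\Ur(a)=0$, so $c\in\acl^{eq}(a)$.

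The hard case, and the main obstacle, is $\Ur(c)=2$, and here I would derive a contradiction. First, from $c\subseteq\acl^{eq}(b)$, $\Ur(b)=2$, and $\Ur(c)=2$, Lascar forces $\Ur(b/c)=0$, so $b$ and $c$ are interalgebraic. Now invoke the mutual indiscernibility: since $X$ is indiscernible over $Y\ni b$, and infinite indiscernible sequences in stable theories are independent over any set of parameters over which they are indiscernible, $X$ is an infinite $b$-independent set of realizations of $\tp(a/b)$. Hence a sufficiently long finite tuple $x_0,\dots,x_n\in X$ begins a Morley sequence in $\tp(a/b)$, and (since canonical bases of finite-rank types are algebraic over finitely many elements of any Morley sequence) $c\in\acl^{eq}(x_0,\dots,x_n)\subseteq\acl^{eq}(X)$. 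Consequently $b\in\acl^{eq}(c)\subseteq\acl^{eq}(X)$, witnessed by an algebraic formula $\psi(y,\bar x)$ with $\bar x\in X^{<\omega}$. By indiscernibility of $Y$ over $X$, every $b'\in Y$ satisfies $\psi(b',\bar x)$, so $Y\subseteq\psi(\mathbb{U},\bar x)$, which is finite — contradicting the infiniteness of $Y$. This rules out $\Ur(c)=2$, completing the proof. The essential ingredient in this last step, and what makes this case the principal obstacle, is precisely the combination of mutual indiscernibility with $|Y|=\infty$: without either, $b$ being interalgebraic with something in $\acl^{eq}(X)$ could not be refuted.
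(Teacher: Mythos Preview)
Your proofs of (1) and (2) are essentially the paper's: unwind the definition of one-based for (1), and observe interalgebraicity via Lascar for (2).

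For (3) your route diverges from the paper's, and there is a genuine gap in the $U(c)=2$ case. Two problems:

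First, the general principle you invoke --- ``infinite indiscernible sequences in stable theories are independent over any set of parameters over which they are indiscernible'' --- is false. An infinite set of elements lying in a single class of a $\emptyset$-definable equivalence relation is indiscernible over $\emptyset$ but not $\emptyset$-independent. The specific conclusion that $X$ is independent over $b$ \emph{does} hold here, but for a different reason: if $a'\not\forkindep_b a''$ for distinct $a',a''\in X$, then $\Ur(a'/a''b)<\Ur(a'/b)=1$, so $a'\in\acl(a''b)$, and indiscernibility of $X$ over $b$ together with $|X|=\infty$ gives a contradiction.

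Second, and more seriously, even granting that $X$ is $b$-independent, what you need for $c\in\acl^{eq}(x_0,\dots,x_n)$ is a Morley sequence in the \emph{stationary} type $\tp(a/\acl^{eq}(b))$, since that is the type whose canonical base $c$ is. You only know that the elements of $X$ realize $\tp(a/b)$: indiscernibility of $X$ over $Y$ gives an automorphism fixing $Y$ pointwise sending $a$ to any $a'\in X$, but such an automorphism need only permute $\acl^{eq}(b)$ setwise, so $a$ and $a'$ can realize different (conjugate) extensions of $\tp(a/b)$ to $\acl^{eq}(b)$. The fact ``$\mathrm{Cb}(p)\subseteq\acl^{eq}$ of a Morley sequence'' does not apply to a sequence of realizations of a possibly non-stationary restriction of $p$, so the step $c\in\acl^{eq}(X)$ is unjustified, and the contradiction does not follow.

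The paper sidesteps this by never passing through a Morley sequence in $X$. It picks a second element $b'\in Y$, uses the $\Ur$-rank together with infiniteness of $X$ and of $Y$ to obtain $a\forkindep_{b'}b$, $a\forkindep_b b'$, and $b\forkindep_a b'$; then sets $e=\mathrm{Cb}(a/\acl^{eq}(bb'))\subseteq\acl^{eq}(b)\cap\acl^{eq}(b')$, and from $b\forkindep_a b'$ deduces $e\forkindep_a e$, hence $e\in\acl^{eq}(a)$. Since $a\forkindep_e b$, this yields $\mathrm{Cb}(a/\acl^{eq}(b))\subseteq\acl^{eq}(e)\subseteq\acl^{eq}(a)$ directly, with no case analysis on $\Ur(c)$.
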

\begin{proof}
$(1)$ A stable theory is one-based if for all $a,B$, $\mathrm{Cb}(a/\acl^{eq}(B))\subseteq \acl^{eq}(a)$. Note that since $\tp(a/A)$ is stationary, $\mathrm{Cb}(a/\acl^{eq}(A))=\mathrm{Cb}(a/A)$. The result follows.

$(2)$ Since $a\not\forkindep b$, by $\Ur$-rank considerations as before, $a$ and $b$ are interalgebraic. So $\mathrm{Cb}(a/\acl^{eq}(b))\subseteq \acl^{eq}(b)\subseteq \acl^{eq}(a)$.

$(3)$ We note that for any $b \neq b^\prime\in Y$, $a\forkindep[b^\prime] b$. Indeed, otherwise $\Ur(a/bb^\prime)<\Ur(a/b^\prime)<\Ur(a)$ and since $\Ur(a)=2$, $a\in \acl(bb^\prime)$, contradicting the mutual indiscernibility of $X,Y$. Similarly, $a\forkindep[b]b^\prime$. Consequently, setting $e:=\mathrm{Cb}(a/\acl^{eq}(bb^\prime))$, $e\subseteq \acl^{eq}(b)\cap \acl^{eq}(b^\prime)$ and $a\forkindep[e]bb^\prime$. Similarly we get that $b\forkindep[a] b^\prime$ and by the properties of forking $e\forkindep[a] e$ and hence $e\in\acl^{eq}(a)$. Finally, since $a\forkindep[e] b$ (and $e\in \acl^{eq}(b)$), $\mathrm{Cb}(a/\acl^{eq}(b))\subseteq \acl^{eq}(e)\subseteq \acl^{eq}(a)$, as needed. 
\end{proof}

Abundance of pseudo-one-based types will be a key tool in our proofs. The above shows that this can be achieved in one-based theories and $\Ur$-rank $1$ types. For $\Ur$-rank $2$ we observe the following:

\begin{lemma}\label{L:E_0-for-U-2}
Assume $\diamondsuit$ and that $\Ur(p)=2$. Then either we can embed an infinite complete graph into $G_p$ or there exists a type-definable symmetric irreflexive relation $E_0\subseteq E_{nalg}$ such that
\begin{itemize}
\item[($\dagger$)] for every $(a,b)\in E_0$, $\tp(a/\acl(b))$ is pseudo-one-based. Moreover, if $F\subseteq E$ is a symmetric irreflexive type-definable relation with $\chi(G_p, F)\geq \aleph_1$ then $F\cap E_0\neq \emptyset$.
\end{itemize}

\end{lemma}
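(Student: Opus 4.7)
The plan begins by disposing of the dichotomy. By Lemma \ref{L:must-be-U2}(2), if some $(a,b)\in E$ with $a,b\in p(G)$ has $a\forkindep b$, then every Morley sequence based on $p$ forms an infinite complete graph embedded in $G_p$, giving the first alternative. So I assume henceforth that every edge of $E$ lying in $G_p$ has forking endpoints; then by Lascar's equality and non-interalgebraicity, every $(a,b)\in E_{nalg}$ within $G_p$ satisfies $\Ur(a/b)=\Ur(b/a)=1$.

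Under this assumption I would set
\[
E_0:=\{(a,b)\in E_{nalg}:\tp(a/\acl^{eq}(b))\text{ and }\tp(b/\acl^{eq}(a))\text{ are pseudo-one-based}\},
\]
which is visibly symmetric, irreflexive, and contained in $E_{nalg}$. For type-definability, observe that in our finite-rank $\omega$-stable context, once the (countably many) possibilities for $\tp(ab)$ are enumerated, the canonical base $\mathrm{Cb}(a/b)$ of the $\Ur$-rank $1$ type $\tp(a/b)$ is a definable function of $b$, and the condition ``$\mathrm{Cb}(a/b)\in\acl^{eq}(a)$'' unfolds into a countable conjunction of first-order assertions (``every component of $\mathrm{Cb}(a/b)$ has only finitely many conjugates over $a$''). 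The union over the countable set of pair types is then type-definable.

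For the ``moreover'' assertion, let $F\subseteq E$ be type-definable, symmetric, irreflexive, with $\chi(G_p,F)\ge\aleph_1$. Applying the argument of Lemma \ref{L:must-be-U2}(1) to $F$ in place of $E$, I may replace $F$ by $F\cap E_{nalg}$ while retaining chromatic number $\ge\aleph_1$. Suppose toward a contradiction $F\cap E_0=\emptyset$. Then for each $(a,b)\in F$ at least one of $\tp(a/\acl^{eq}(b))$ or $\tp(b/\acl^{eq}(a))$ fails to be pseudo-one-based; by symmetry of $F$, after possibly swapping endpoints I may fix some $(a,b)\in F$ with $\tp(a/\acl^{eq}(b))$ not pseudo-one-based.

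It then suffices to embed $(a,b)$ into infinite mutually indiscernible sequences $X,Y\subseteq p(G)$ with $a\in X$ and $b\in Y$, since Lemma \ref{L:p-o-b}(3) would then make $\tp(a/\acl^{eq}(b))$ pseudo-one-based, yielding the desired contradiction. Producing such $X,Y$ is the principal obstacle. I would proceed by a zig-zag construction inside the saturated $\omega$-stable model: take the Morley sequence $Y_0$ of $\tp(b/a)$ over $a$ starting at $b$, then a Morley sequence $X_0$ of $\tp(a/Y_0)$ over $Y_0$ starting at $a$, and iterate, alternately refining each sequence to be indiscernible over the growing other one and passing to a symmetric limit using the stationarity afforded by $\acl(\emptyset)=\dcl(\emptyset)$ together with the definability of types. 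Proposition \ref{P:stable-edge-relation} applied to the stable formulas cutting out $F$ then guarantees that the distinguished edge $(a,b)\in F$ persists throughout the construction, so that the limiting configuration indeed realizes the hypotheses of Lemma \ref{L:p-o-b}(3).
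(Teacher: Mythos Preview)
Your definition of $E_0$ as the set of pseudo-one-based pairs creates two genuine problems that the paper's approach avoids entirely.

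First, the type-definability argument is backwards. You claim that ``$\mathrm{Cb}(a/b)\in\acl^{eq}(a)$'' is a countable \emph{conjunction}, but algebraicity is a countable \emph{disjunction}: $c\in\acl^{eq}(a)$ means that \emph{some} formula over $a$ with finitely many solutions holds of $c$. So for each fixed $2$-type $r=\tp(ab)$, pseudo-one-basedness is an open condition, and your $E_0$ is a priori only a countable union of complete types inside $E_{nalg}$ --- relatively open, not closed. Your sentence ``the union over the countable set of pair types is then type-definable'' does not follow.

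Second, the zig-zag construction in the ``moreover'' part is where all the content now lives, and it is not established. You need infinite mutually indiscernible $X,Y$ with $a\in X$, $b\in Y$ for an \emph{arbitrary} $(a,b)\in F$. The alternating Morley-sequence scheme you sketch does not obviously converge to a mutually indiscernible pair containing the original $(a,b)$, and your appeal to Proposition~\ref{P:stable-edge-relation} is misplaced: that proposition concerns increasing tuples from a \emph{single} indiscernible sequence and a single stable formula, not a pair of mutually indiscernible sequences against a type-definable $F$. (If you could make the zig-zag work for every pair, you would in fact be proving that \emph{every} $(a,b)\in E_{nalg}\cap p^2$ is pseudo-one-based, so $E_0=E_{nalg}\restriction p^2$ and both problems evaporate --- but that is exactly the hard step you have not done.)

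The paper's proof sidesteps both issues with one move: it \emph{defines} $E_0$ as the set of $(a,b)\in E\restriction p(G)$ that lie in a complete bipartite subgraph $K_{X,Y}$ of $G_p$ with $X,Y$ infinite mutually indiscernible. Type-definability is then immediate by compactness (for each $n$ write down the existence of $n$-element mutually $n$-indiscernible sides forming a $K_{n,n}$), and Lemma~\ref{L:p-o-b}(3) gives pseudo-one-basedness directly from the definition. For the ``moreover'', Erd\H{o}s--Hajnal gives $K_{n,n}\subseteq (p(G),F)$ for all $n$ whenever $\chi(p(G),F)\geq\aleph_1$, and saturation then produces the required infinite mutually indiscernible bipartite configuration inside $F$, so $F\cap E_0\neq\emptyset$. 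No zig-zag, no canonical-base bookkeeping.
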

\begin{proof}
Assume that we cannot embed an infinite complete graph into $G_p$, in particular by Lemma \ref{L:must-be-U2}, for every $a,b\models p$ with $a \E b$, $a\not\forkindep b$.

Let $E_0$ be the set of pairs $(a,b)\in E\restriction p(G)$ such that there exists a complete bipartite subgraph $K_{X,Y}$ of $G_p$ such that $X$ and $Y$ are infinite mutually indiscernible sets with $a\in X$ and $b\in Y$. Easily, $E_0$ is type-definable by a countable type. Since, by \cite[Corollary 5.6]{EH}, $G_p$ contains $K_{n,n}$ (the complete bipartite graph on $n$ vertices) for every $n<\omega$, $E_0$ non-empty. Clearly, $E_0\subseteq E_{nalg}$. By Lemma \ref{L:p-o-b}(3), for every $(a,b)\in E_0$, $\tp(a/\acl(b))$ is pseudo-one-based.
%

For the moreover part, if $\chi(G_p,F)\geq\aleph_1$ then by by \cite[Corollary 5.6]{EH} we may embed $K_{n,n}$ into it for any $n<\omega$. Thus by saturation necessarily $F\cap E_0\neq \emptyset$.
\end{proof}

The following is the key proposition of the proof and where pseudo-one-based types show their usefulness.

\begin{proposition}\label{P:main-prop-finite-u-rank}
Assume $\diamondsuit$ and that $E_0\subseteq E_{nalg}$ is a type-definable symmetric irreflexive relation satisfying ($\dagger$) from Lemma \ref{L:E_0-for-U-2}.

\begin{enumerate}
\item For any $(a,b)\in E_{0}$ there is a finite tuple $e$ such that $a\forkindep[e] b$ and $\tp(a/e)$ is stationary.
\item Let $\Psi$ be the collection of all pairs of formulas $(\varphi(u,x),\psi(u,x))$ satisfying
\begin{enumerate}
\item $\varphi(u,a)$ and $\psi(u,a)$ are algebraic formulas each isolating a complete type over some (any) $a\models p$;
\item there exist $a,b\models p$ and $e$ such that $(a,b)\in E_0$, $a\forkindep[e] b$, $\varphi(e,a)$, $\psi(e,b)$ and $\tp(a/e)$ is stationary. 
\end{enumerate}
For any $(\varphi,\psi)\in\Psi$ let \[E_{\varphi,\psi}=\{(a,b)\in E_{nalg}:a,b\in p(G),\, \exists e\left(\varphi(e,a)\wedge \psi(e,b)\right)\}.\]

Then either $(*)$ we can embed an infinite complete graph into $(p(G),E_{nalg})$ or there exists $(\varphi(u,x),\psi(u,x))\in \Psi$ such that $(**)_{\varphi,\psi}$: $p\vdash \forall u(\varphi(u,x)\rightarrow \neg \psi(u,x))$ and $\chi(G_{p,\varphi,\psi})\geq\aleph_1$, where $G_{p,\varphi,\psi}:=(p(G),E_{\{\varphi,\psi\}})$ and $E_{\{\varphi,\psi\}}=E_{\varphi,\psi}\vee E_{\psi,\varphi}$.

\item Assume $(**)_{\varphi,\psi}$. There exists $q_\varphi\in S(\emptyset)$ such that for any $a \models p$ and $e\models \varphi(u,a)$, $e\models q_\varphi$. Similarly, there exists $q_\psi\in S(\emptyset)$ such that for any $a \models p$ and $e\models \psi(u,a)$, $e\models q_\psi$. Furthermore, $q:=q_\varphi=q_\psi$ and $0<\Ur(q)<\Ur(p)$.

\item Assume $(**)_{\varphi,\psi}$ and let $q=q_\varphi=q_\psi$. The type-definable relation $e_1 \R e_2$ given by \[(\exists a\models p) \left( (\varphi(e_1,a)\wedge \psi(e_2,a)\vee (\varphi(e_2,a)\wedge \psi(e_1,a))\right),\]
defines a graph $H_q$ on realizations of $q$ with $\chi(H_q)\geq \aleph_1$.

\end{enumerate}
\end{proposition}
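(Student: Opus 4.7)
The plan is in two parts: first to verify that $R$ genuinely defines a graph on the realizations of $q$, and second to transfer the lower bound $\chi(G_{p,\varphi,\psi})\geq \aleph_1$ to $H_q$ by a coloring argument that exploits the algebraicity of $\varphi(u,a)$ and $\psi(u,a)$.

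For the first part, symmetry of $R$ is built into the definition. Irreflexivity follows immediately from the first clause of $(**)_{\varphi,\psi}$: if $e\R e$ then some $a\models p$ would satisfy $\varphi(e,a)\wedge \psi(e,a)$, contradicting $p\vdash \forall u\,(\varphi(u,x)\to\neg\psi(u,x))$. That endpoints of $R$ realize $q$ is exactly what part $(3)$ supplies, using $q=q_\varphi=q_\psi$.

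For the second part, I argue by contrapositive. Suppose $c\colon q(G)\to \aleph_0$ is a coloring of $H_q$ with countably many colors; I will build from it a countable coloring of $G_{p,\varphi,\psi}$, contradicting $(**)_{\varphi,\psi}$. Because $\varphi(u,a)$ and $\psi(u,a)$ are algebraic and isolate complete types over any $a\models p$, the sets $F_\varphi(a):=\{e:\models\varphi(e,a)\}$ and $F_\psi(a):=\{e:\models\psi(e,a)\}$ have fixed finite sizes $n_\varphi,n_\psi$ independent of $a\models p$, and all their elements realize $q$. Define
\[
c'(a):=\bigl(c[F_\varphi(a)],\,c[F_\psi(a)]\bigr),
\]
a map from $p(G)$ to the countable set $\mathcal{P}_{\le n_\varphi}(\aleph_0)\times \mathcal{P}_{\le n_\psi}(\aleph_0)$.

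The core of the argument is that $c'$ is a legal coloring. If $(a,b)\in E_{\varphi,\psi}$ is witnessed by $e\in F_\varphi(a)\cap F_\psi(b)$ and $c'(a)=c'(b)$, then $c(e)\in c[F_\varphi(b)]$, so some $e'\in F_\varphi(b)$ satisfies $c(e')=c(e)$; but then $\varphi(e',b)\wedge \psi(e,b)$ witnesses $e\R e'$, and $(**)_{\varphi,\psi}$ forces $e\neq e'$ (otherwise both $\varphi$ and $\psi$ would hold of $(e,b)$), producing a monochromatic edge of $H_q$ — contradiction. The case $E_{\psi,\varphi}$ is handled symmetrically. I do not foresee a real obstacle; the only subtlety is that the first clause of $(**)_{\varphi,\psi}$ is used twice, once for irreflexivity of $R$ and once to secure $e'\neq e$, while the uniformity of $n_\varphi,n_\psi$ across $p(G)$ is what keeps the produced palette countable.
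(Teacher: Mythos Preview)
Your proposal addresses only part~(4), taking parts~(2) and~(3) as already established; for part~(4) the argument is correct and in fact more direct than the paper's.

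The paper proceeds in two stages. It first fixes enumerations $\varphi(G,a)=\{e_i(a):i<n\}$ and $\psi(G,a)=\{e'_j(a):j<m\}$ for each $a\models p$, introduces auxiliary graphs $H_{i,j}$ on pairs $(e_i(a),e'_j(a))$ with an edge whenever one coordinate matches across, and shows via a product coloring $a\mapsto (c_{i,j}(e_i(a),e'_j(a)))_{i,j}$ that some $H_{i_0,j_0}$ has chromatic number $\geq\aleph_1$. Only then does it transfer a hypothetical countable coloring $c$ of $H_q$ to $H_{i_0,j_0}$ via $(e_{i_0}(a),e'_{j_0}(a))\mapsto (c(e_{i_0}(a)),c(e'_{j_0}(a)))$. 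Your one-step argument bypasses the $H_{i,j}$ entirely: by coloring $a$ with the pair of \emph{sets} $(c[F_\varphi(a)],c[F_\psi(a)])$ you avoid choosing enumerations and collapse both stages into a single contrapositive. Both routes use the clause $p\vdash\forall u(\varphi(u,x)\to\neg\psi(u,x))$ in the same essential way (irreflexivity of $R$ and the inequality $e\neq e'$), and both rely on the uniform finiteness of $\varphi(G,a),\psi(G,a)$ to keep the palette countable; your version is a cleaner packaging of the same idea, while the paper's intermediate $H_{i,j}$ make the index bookkeeping explicit but are not logically needed for the conclusion.
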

\begin{proof}
We start by showing that for $(a,b)\in E$,
\[E_0\subseteq \bigcup_{(\varphi,\psi)\in \Psi} E_{\{\varphi,\psi\}}.\]
Let $(a,b)\in E_0$. Since $\tp(a/\acl(b))$ is pseudo-one-based, $\mathrm{Cb}(a/\acl(b))\subseteq \acl(a)\cap\acl(b)$. By \cite[Exercise 8.4.7]{TZ}, there is a finite tuple $e$ such that $\dcl(e)=\mathrm{Cb}(a/\acl(b))$ (this proves $(1)$).  We choose $\varphi(u,a)$ to be a formula isolating $\tp(e/a)$ and $\psi(u,b)$ to be a formula isolating $\tp(e/b)$. Hence $(a,b)\in E_{\{\varphi,\psi\}}$ and $(\varphi,\psi)\in \Psi$.

Since $E_{0}$ is type-definable, by saturation there exists a finite subset $\Psi_0\subseteq \Psi$ such that
\[E_0\subseteq  \bigcup_{(\varphi,\psi)\in \Psi_0} E_{\{\varphi,\psi\}}.\]

Assume that we cannot embed an infinite complete graph into $(p(G),E_{nalg})$.

\begin{claim}
For any $(\varphi,\psi)\in \Psi_0$,  $p\vdash \forall u(\varphi(u,x)\rightarrow\neg \psi(u,x))$.
\end{claim}
\begin{claimproof}
Choose any $\varphi,\psi\in \Psi_0$ and let $(a,b)\in E_{0}$ and $e$ be such that $\varphi(e,a)$, $\psi(e,b)$, $a\forkindep[e] b$ and $\tp (a/e)$ stationary.
Assume, toward a contradiction that there is some $e^\prime$ with $\varphi(e^\prime,b)$ and $\psi(e^\prime, b)$. Since both $\varphi(u,b)$ and $\psi(u,b)$ isolate a complete type over $b$ and are mutually consistent, they must be equivalent (i.e. define the same definable set).  So $\varphi(e,a)$ and $\varphi(e,b)$.

Let $\sigma$ be an automorphism satisfying $\sigma(a)=b$. Applying to the formulas above: $\varphi(e,b)$ and $\varphi(\sigma(e),b)$. Since $\varphi(u,b)$ isolates a complete type over $b$ there exists an automorphism $\tau$ fixing $b$ and mapping $\sigma(e)$ to $e$. Combining, $\tau\circ\sigma$ fixes $e$ and maps $a$ to $b$, i.e. $a\equiv_eb$.

By assumption $\tp(a/e)$ is stationary and $b\models \tp(a/e)|ea$ so we may construct a Morley sequence over $e$ starting with $a,b$. Since $a\E_0 b$ we get an infinite complete graph, contradicting our assumption.
\end{claimproof}

Note that each $E_{\{\varphi,\psi\}}$ defines a graph relation.
Let \[\theta(x,y)=\bigvee_{(\varphi,\psi)\in \Psi_0}  \exists e\left(\varphi(e,x)\wedge \psi(e,y)\right)\vee  \exists e\left(\varphi(e,y)\wedge \psi(e,x)\right).\]
Set $E_1=\{(a,b)\in E_{nalg}:\theta(a,b)\}$ and $E_2=\{(a,b)\in E_{nalg}:\neg \theta(a,b)\}$. Obviously, $E_{nalg}=E_1\cup E_2$ and both $E_1$ and $E_2$ are symmetric. If $\chi(p(G),E_2)\geq \aleph_1$ then by ($\dagger$) there exists $(a,b)\in E_0\cap E_2$, contradicting the choice of $\Psi_0$. Thus, by Lemma \ref{L:basic-prop-chi}(2), $\chi(p(G), E_1)\geq \aleph_1$.
Again by Lemma \ref{L:basic-prop-chi}(2), there is some $(\varphi,\psi)\in \Psi_0$ such that $\chi(G_{p,\varphi,\psi})\geq \aleph_1$.

$(3)$ Let $q_\varphi=\tp(e_1)$ for some $e_1\models \varphi(u,a)$ and some $a\models p$ and let $q_\psi=\tp(e_2)$ for some $e_2\models \psi(u,b)$ and some $b\models p$. Since $p$ is a complete type and $\varphi$ and $\psi$ each isolate a complete type it follows that $q_\varphi$ and $q_\psi$ do not depend on $a$, $b$, $e_1$ or $e_2$. 

As $(\varphi,\psi)\in \Psi$, there exist $(a,b)\in E_{nalg}$ and $e$ such that $\varphi(e,a)\wedge \psi(e,b)$ and $a\forkindep[e] b$. Consequently, $e\models q_{\varphi}$ and $e\models q_{\psi}$ and hence $q_\varphi=q_\psi$.
Since $e\in \acl(a)$, 
\[\Ur(a/e)+\Ur(e)=\Ur(a).\] 

If $\Ur(e)=0$ then $a\forkindep e$ so by transitivity of forking $a\forkindep b$, but then we may embed an infinite complete graph as in Lemma \ref{L:must-be-U2}, which contradicts $(2)$. If $\Ur(a/e)=0$ then $a\in \acl(e)\subseteq \acl(b)$ so $a$ and $b$ are interalgebraic, contradiction (see above Lemma \ref{L:must-be-U2}).

$(4)$  Note that $R$ defines a graph, i.e. it is irreflexive by $(2)$. Let $n$ be $|\varphi(G,a)|$ and $m$ be $|\psi(G,a)|$ for some (any) $a\models p$. For any $a\models p$ choose enumerations $\varphi(G,a)=\{e_i(a):i<n\}$ 
and $\psi(G,a)=\{e_i^\prime(a):i<m\}$. 

For $i<n$ and $j<m$ let $H_{i,j}=\{(e_i(a),e_j^\prime(a)):a\models p\}.$

We define an edge relation on $H_{i,j}=\{(e_i(a),e_j^\prime(a)):a\models p\}$ (for $i<n, j<m$) as follows: $(e_i(a),e_j^\prime(a))$ is connected to $(e_i(b),e_j^\prime(b))$ if and only if $e_i(b)=e_j^\prime(a)$ or $e_i(a)=e_j^\prime(b)$. Note that $e_i(a)\neq e_j^\prime (a)$ for all $a\models p$ and $i<n,\, j<m$ by $(2)$, hence this relation is irreflexive.

\begin{claim}
There exist $i_0<n$ and $j_0<m$ such that $\chi(H_{i_0,j_0})\geq \aleph_1$.
\end{claim}
\begin{claimproof}
Assume that for all $i<n,\, j<m$, $H_{i,j}$ is countably colorable, say by the coloring function $c_{i,j}:H_{i,j}\to \aleph_0$. We claim that this entails that $G_{p,\varphi,\psi}$ is countably colorable, which would give a contradiction to choice of $(\varphi,\psi)$.

We define a coloring $c:G_{p,\varphi,\psi}\to (\aleph_0)^{n\times m}$ by $c(a)(i,j)=c_{i,j}(e_i(a),e_j^\prime(a))$. The contradiction will follow if we show that this is a legal coloring. Let $(a,b)\in E_{\varphi,\psi}$ ($(a,b)\in E_{\psi,\varphi}$ is similar). Thus there exists some $e\models \varphi(u,a)\wedge \psi(u,b)$. Consequently, $e=e_i(a)=e_j^\prime(b)$ for some $i<n,j<m$, so \[c_{i,j}(e_i(a),e_j^\prime(a))\neq c_{i,j}(e_i(b),e_j^\prime(b)),\]
and $c(a)\neq c(b)$.
\end{claimproof}

Now, assume that $\chi(H_q)\leq \aleph_0$ and let $c:q(G)\to\aleph_0$ be a coloring. We define $f:H_{i_0,j_0}\to\aleph_0\times \aleph_0$ by $f(e_{i_0}(a),e_{j_0}^\prime(a))=\left(c(e_{i_0}(a)),c(e_{j_0}^\prime(a))\right)$. This gives a legal coloring of $H_{i_0,j_0}$ using countably many colors, and we reach a contradiction:  assume with out loss of generality that $(e_{i_0}(a),e^\prime_{j_0}(a)),(e_{i_0}(b),e_{j_0}^\prime(b))\in H_{i_0,j_0}$ with $e^\prime_{j_0}(a)=e_{i_0}(b)$. Since $e_{i_0}(a)\R e_{j_0}^\prime(a)$, $c(e_{i_0}(a))\neq c(e_{j_0}^\prime(a))=c(e_{i_0}(b))$ and thus $f(e_{i_0}(a),e^\prime_{j_0}(a))\neq f(e_{i_0}(b),e_{j_0}^\prime(b))$, as needed.

\end{proof}

\begin{remark}
We remark that if $\tp(a/e)$ is stationary and $e^\prime\models \tp(e/a)$ then $\tp(a/e^\prime)$ is also stationary. 
\end{remark}

The procedure outlined in the items of Proposition \ref{P:main-prop-finite-u-rank} supplies, under some assumptions, a graph, with uncountable chromatic number, concentrated on a type of lower $\Ur$-rank than the one we started with. This hints that some induction procedure may be possible (at least for one-based theories). We will not pursue this further now. For now we concentrate on graphs of at most $\Ur$-rank $2$.

The following is an easy exercise in stability theory.
\begin{lemma}\label{L:stationarity}
Let $T$ be a stable theory. Assume that $A\forkindep[C] B$, $A^\prime\forkindep[C] B^\prime$, $B\equiv_C B^\prime$, $A\equiv_C A^\prime$ and $\tp(A/C)$ stationary. Then $AB\equiv_C A^\prime B^\prime$.
\end{lemma}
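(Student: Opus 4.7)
The plan is to use stationarity of $\tp(A/C)$ together with an auxiliary automorphism fixing $C$ to reduce to a single common $B$-side, then invoke uniqueness of non-forking extensions.

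First I would use $B \equiv_C B'$ to pick an automorphism $\sigma$ of the monster model fixing $C$ pointwise with $\sigma(B) = B'$. Since $\sigma$ fixes $C$, we automatically get $\tp(AB/C) = \tp(\sigma(A)\sigma(B)/C) = \tp(\sigma(A)B'/C)$. Moreover, from $A \forkindep[C] B$ and the fact that $\sigma$ fixes $C$, we obtain $\sigma(A) \forkindep[C] B'$, and from $A \equiv_C A'$ we get $\sigma(A) \equiv_C A'$.

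Next I would compare $\tp(\sigma(A)/CB')$ with $\tp(A'/CB')$. Both $\sigma(A)$ and $A'$ realize $\tp(A/C)$ (since $A \equiv_C A'$), and both are independent from $B'$ over $C$. Because $\tp(A/C)$ is stationary, it has a unique non-forking extension to the superset $CB'$; hence $\tp(\sigma(A)/CB') = \tp(A'/CB')$, which gives $\sigma(A)B' \equiv_C A'B'$. Combining with the previous paragraph yields $AB \equiv_C \sigma(A)B' \equiv_C A'B'$, as required.

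There is no real obstacle here: the only substantive content is the uniqueness of non-forking extensions of stationary types, which is immediate from the definition of stationarity. The rest is just transport via an automorphism fixing $C$. The only mild care needed is to verify that $\sigma(A) \forkindep[C] B'$ follows from $A \forkindep[C] B$ (because non-forking is invariant under automorphisms fixing the base) and that $\sigma(A) \equiv_C A \equiv_C A'$ (so that $\sigma(A)$ realizes the same type over $C$ as $A'$, which is the hypothesis needed to apply stationarity).
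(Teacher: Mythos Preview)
Your argument is correct and is exactly the standard proof of this fact. The paper does not actually supply a proof of this lemma---it simply introduces it with ``The following is an easy exercise in stability theory''---so there is nothing to compare against; your write-up is precisely the expected solution to that exercise.
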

%
%
%
%

\begin{theorem}\label{T:Uleq2}
Let $G=(V,E)$ be an $\omega$-stable graph with $\chi(G)\geq\aleph_1$. If $\Ur(G)\leq 2$ then $G$ contains all finite subgraphs of $\Sh_n(\omega)$ for some $n\leq 2$. 
\end{theorem}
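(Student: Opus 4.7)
The plan is to reduce to Assumption $\diamondsuit$ with a $1$-type $p$ of finite $\Ur$-rank such that $\chi(G_p)\geq\aleph_1$, handle $\Ur(p)=1$ directly by a Morley sequence (which already embeds $\Sh_1(\omega)$), and for $\Ur(p)=2$ descend via Lemma \ref{L:E_0-for-U-2} and Proposition \ref{P:main-prop-finite-u-rank} to an auxiliary graph on a type of $\Ur$-rank $1$, from which I will build a homomorphism $\Sh_2(\omega)\to G$ by hand; Proposition \ref{P:homomorphism-is-enough} then finishes.

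First I would install $\diamondsuit$ by passing to $T^{eq}$ (with $T=\mathrm{Th}(G)$ countable by $\omega$-stability), taking a sufficiently saturated elementary extension, and naming suitable parameters to force $\acl(\emptyset)=\dcl(\emptyset)$. Since $|S_1(\emptyset)|\leq\aleph_0$, Lemma \ref{L:basic-prop-chi}(1) yields some non-algebraic $p\in S_1(\emptyset)$ with $\Ur(p)\leq\Ur(G)\leq 2$ and $\chi(G_p)\geq\aleph_1$. If $\Ur(p)=1$, Lemma \ref{L:must-be-U2}(3) gives a Morley sequence realizing $p$ that is an infinite complete subgraph of $G_p$, i.e.\ a copy of $\Sh_1(\omega)$, and we are done with $n=1$. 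Otherwise $\Ur(p)=2$; Lemma \ref{L:E_0-for-U-2} then provides either another $K_\omega$ (again done with $n=1$) or a type-definable $E_0\subseteq E_{nalg}$ enjoying property $(\dagger)$. Feeding $E_0$ into Proposition \ref{P:main-prop-finite-u-rank} yields, unless we once more extract $K_\omega$, a pair $(\varphi,\psi)\in\Psi$ satisfying $(**)_{\varphi,\psi}$; parts $(3)$ and $(4)$ then furnish a stationary $\emptyset$-type $q$ with $\Ur(q)=1$ together with an $\emptyset$-type-definable graph $H_q$ on $q(G)$ with $\chi(H_q)\geq\aleph_1$.

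Now I iterate the $\Ur=1$ step on $H_q$, which itself satisfies $\diamondsuit$: Lemma \ref{L:must-be-U2}(3) supplies a Morley sequence $(e_i)_{i<\omega}\models q$ that is complete in $H_q$. Because the sequence is indiscernible over $\emptyset$, the truth of each of the two disjuncts in the definition of $R$ from Proposition \ref{P:main-prop-finite-u-rank}(4) depends only on the order type of the pair; since $R(e_0,e_1)$ holds, at least one disjunct must be satisfied uniformly, so without loss of generality I may pick, for each $i<j$, some $a_{ij}\models p$ with $\varphi(e_i,a_{ij})\wedge\psi(e_j,a_{ij})$. Then $\Phi\colon(\omega^{\U 2})_<\to V$, $(i,j)\mapsto a_{ij}$, is a graph homomorphism $\Sh_2(\omega)\to G$: for the shift-edge $\{(i,j),(j,k)\}$ with $i<j<k$, the element $e_j$ witnesses both $\psi(e_j,a_{ij})$ and $\varphi(e_j,a_{jk})$, so $a_{jk}\,E_{\varphi,\psi}\,a_{ij}$ and a fortiori $a_{ij}\,E\,a_{jk}$. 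Applying Proposition \ref{P:homomorphism-is-enough} to $\Phi$ then delivers $n\leq 2$ as required.

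The most delicate points are preserving $\diamondsuit$ when descending from $p$ to $q$ — this is automatic because $\varphi,\psi$ are $\emptyset$-formulas and $\acl(\emptyset)=\dcl(\emptyset)$ has already been arranged in the base setup, so $q$ is still a stationary $\emptyset$-type and $R$ is $\emptyset$-type-definable — and the uniform choice of witness direction on the Morley sequence in $H_q$, which follows from pure indiscernibility with no Ramsey argument needed. Collapsing the various ``embed $K_\omega$'' side cases into the single outcome ``$G$ contains $\Sh_1(\omega)$'' keeps the casework tight, so the entire argument boils down to the one-line verification above that $\Phi$ respects edges.
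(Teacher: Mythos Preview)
Your overall strategy matches the paper's, but the crucial edge-verification at the very end is circular. Recall the definition in Proposition~\ref{P:main-prop-finite-u-rank}(2):
\[
E_{\varphi,\psi}=\{(a,b)\in E_{nalg}:a,b\in p(G),\ \exists e\,(\varphi(e,a)\wedge\psi(e,b))\}.
\]
Membership in $E_{\varphi,\psi}$ \emph{presupposes} that $(a,b)\in E_{nalg}\subseteq E$. So from $\varphi(e_j,a_{jk})\wedge\psi(e_j,a_{ij})$ you cannot conclude $a_{jk}\mathrel{E_{\varphi,\psi}}a_{ij}$, and hence cannot conclude $a_{ij}\mathrel{E}a_{jk}$ ``a fortiori''; that is precisely the statement you are trying to prove. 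The existential witness gives you only half of what $E_{\varphi,\psi}$ demands.

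The paper fills this gap with a genuine stationarity argument. One first shows $a_{i,j}\forkindep[e_j]a_{j,k}$ (a short $\Ur$-rank computation, using that $a_{i,j}\in\acl(e_i,e_j)$ and that $a_{i,j},a_{j,k}$ are not interalgebraic). Then, using that $(\varphi,\psi)\in\Psi$, one picks $a,b\models p$ and $e$ with $a\mathrel{E}b$, $a\forkindep[e]b$, $\tp(a/e)$ stationary, $\varphi(e,a)$, $\psi(e,b)$; after an automorphism one arranges $e=e_j$, checks $a\equiv_{e_j}a_{j,k}$ and $b\equiv_{e_j}a_{i,j}$ (using that $\varphi(u,-)$ and $\psi(u,-)$ isolate complete types), and then Lemma~\ref{L:stationarity} gives $ab\equiv a_{j,k}a_{i,j}$, whence $a_{i,j}\mathrel{E}a_{j,k}$. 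This is exactly the ``one-line verification'' you dismissed, and it is where all the work sits; your shortcut does not survive the actual definition of $E_{\varphi,\psi}$.
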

\begin{proof}
We may assume that $G$ is saturated (in particular $\aleph_1$-saturated) and we may also work in $G^{eq}$. Fix some countable $G_0\prec G$ and add constants for it (so every type over $\emptyset$ is stationary). By $\omega$-stability and Lemma \ref{L:basic-prop-chi}(1) there is some type $p\in S_1(\emptyset)$ such that $\chi(G_p)\geq \aleph_1$ with $G_p=(p(G),E\restriction p(G))$.
We are now in the situation of Assumption $\diamondsuit$.

If $\Ur(p)=0$ then $p$ is algebraic (even realized), contradicting $\chi(G_p)\geq \aleph_1$. If $\Ur(p)=1$ then we may embed an infinite complete graph by Lemma \ref{L:must-be-U2}(3). 

We may thus assume that $\Ur(p)=2$ and that $G$ does not contain an infinite complete graph. 

Let $E_0\subseteq E$ be the type-definable set from Lemma \ref{L:E_0-for-U-2} and $\varphi$, $\psi,G_{p,\varphi,\psi}$, $q$, $R$ and $H_q$ be as supplied by Proposition \ref{P:main-prop-finite-u-rank} with respect to $\E_0$ and $p$ (so necessarily $\Ur(q)=1$). Noting that Assumption $\diamondsuit$ is true for $H_q$, we may apply Lemma \ref{L:must-be-U2}(3) and thus there exist a Morley sequence $\langle e_i: i<\omega\rangle$ such that $e_i \R e_j$ for all $i\neq j$. 

Since $e_0\R e_1$ then there is some $a_{0,1}\models p$ such that, without loss of generality, $\varphi(e_0,a_{0,1})\wedge \psi(e_1,a_{0,1})$. For any $i<j<\omega$ let $a_{i,j}$ be such that $a_{i,j}e_ie_j\equiv a_{0,1}e_0e_1$. 

Note that for every $i<j<\omega$, $a_{i,j}\in \acl(e_i,e_j)$. Indeed, by Lascar's equality
$\Ur(a_{i,j}/e_ie_j)+\Ur(e_i/e_j)=\Ur(a_{i,j}e_i/e_j)$ and since $e_i\in\acl(a_{i,j})$ the right hand side is also equal to $\Ur(a_{i,j}/e_j)$. Now we note that 
$\Ur(a_{i,j}/e_j)+\Ur(e_j)=\Ur(a_{i,j}e_j)$, but as before $e_j\in \acl(a_{i,j})$ so the right hand side is equal to $2$ and since $\Ur(e_j)=1$ we conclude that $\Ur(a_{i,j}/e_j)=1$. As $e_i\forkindep e_j$ we have that $\Ur(e_i/e_j)=1$ as well so we combine everything and get that $\Ur(a_{i,j}/e_ie_j)=0$.

Define a map $f:\Sh_2(\omega)\to G_{p,\varphi,\psi}$ by $(i,j)\mapsto a_{i,j}$. We claim that this is an injective graph homomorphism.

For $(i,j)\neq (i^\prime,j^\prime)\in \Sh_2(\omega)$, $a_{i,j}$ and $a_{i^\prime,j^\prime}$ are not interalgebraic and in particular $f$ is injective. Indeed, assume $\acl(a_{i,j})=\acl(a_{i^\prime,j^\prime})$. Since $(i,j)\neq (i^\prime,j^\prime)$,  $|\{i,j,i^\prime,j^\prime\}|\geq 3$, and we assume that $i\neq j,i^\prime,j^\prime$ (the other cases are similar). On the other hand, \[e_i\in \acl(a_{i,j})=\acl(a_{i^\prime,j^\prime})\subseteq \acl(e_{i^\prime},e_{j^\prime}),\]
contradicting indiscernibility.

$f$ is a graph homomorphism. Let $(i,j),(j,k)\in \Sh_2(\omega)$, so $i<j<k<\omega$. As $a_{i,j}$ and $a_{j,k}$ are not interalgebraic, necessarily $a_{i,j}\forkindep[e_j] a_{j,k}$ for otherwise
\[ \Ur(a_{i,j}/e_ja_{j,k})<\Ur(a_{i,j}/e_j)=1\]
and then $a_{i,j}\in \acl(e_ja_{j,k})\subseteq \acl(a_{j,k})$.

By the choice of $(\varphi,\psi)$ in Proposition \ref{P:main-prop-finite-u-rank}, we may find $a,b\models p$ and $e\models q$ with $a \E b$, $a\forkindep[e] b$, $\tp(a/e)$ stationary, $\varphi(e,a)$ and $\psi(e,b)$. By applying an automorphsim mapping $e_j$ to $e$ we may assume $e_j=e$. Let $\sigma$ be an automorphism mapping $a$ to $a_{j,k}$, thus $ae\equiv a_{j,k}\sigma(e)$ and $\sigma(e)\models \varphi(u,a_{j,k})$. Applying now an automorphism mapping $\sigma(e)$ to $e$ but fixing $a_{j,k}$ we conclude that $a\equiv_e a_{j,k}$ and similarly $b\equiv_e a_{i,j}$. Since $\tp(a/e)$ is stationary, by Lemma \ref{L:stationarity}, $ab\equiv a_{j,k}a_{i,j}$ so $a_{i,j}\E a_{j,k}$ as well.
\end{proof}

\bibliographystyle{alpha}
\bibliography{1196}

\end{document}